\theoremstyle{plain}
\newtheorem{lemma}{Lemma}[section]
\newtheorem{theorem}[lemma]{Theorem}
\newtheorem{proposition}[lemma]{Proposition}
\newtheorem{corollary}[lemma]{Corollary}
\theoremstyle{remark}
\newtheorem{rem}[lemma]{Remark}
\def\bb{\begin{color}{black}}
\def\bg{\begin{color}{black}}
\def\br{\begin{color}{black}}
\def\eg{\end{color}}
\def\er{\end{color}}
\def\eb{\end{color}}
\def\PP{\mathbb{P}}
\def\F{\mathbb{F}}
\def\bF{\bar{\mathbb{F}}}
\def\GG{\mathbb{G}}
\def\bGG{\bar{\mathbb{G}}}
\def\ve{{\varepsilon}}
\def\le{\leqslant}
\def\pd{\partial}
\def\ge{\geqslant}
\def\es{\emptyset}
\def\E{{\mathbb E}}
\def\O{{\Omega}}
\def\R{{\mathbb R}}
\def\C{{\mathbb C}}
\def\N{{\mathbb N}}
\def\Z{{\mathbb Z}}
\def\a{{\alpha}}
\def\b{{\beta}}
\def\d{{\delta}}
\def\D{{\Delta}}
\def\t{{\tau}}
\def\g{{\gamma}}
\def\G{{\Gamma}}
\def\s{{\sigma}}
\def\l{{\lambda}}
\def\th{{\theta}}
\def\Th{{\Theta}}
\def\cS{{\cal S}}
\def\cF{{\cal F}}
\def\cD{{\cal D}}
\def\q{\quad}
\def\id{\operatorname{id}}
\def\re{\operatorname{Re}}
\def\im{\operatorname{Im}}
\def\cp{\operatorname{cap}}
\def\dist{\operatorname{dist}}
\def\fing{\operatorname{finger}}
\def\gap{\operatorname{gap}}
\def\<{\langle}
\def\>{\rangle}
\def\ua{\uparrow}
\def\sse{\subseteq}
\def\sm{\setminus}
\renewcommand\epsilon{\ve}
\begin{document}
\bibliographystyle{plain}

\begin{center}
\LARGE \textbf{Hastings--Levitov Aggregation in the Small-Particle Limit}

\vspace{0.2in}

\large {\bfseries James Norris
\footnote{Statistical Laboratory, Centre for Mathematical Sciences,
  Wilberforce Road, Cambridge, CB3 0WB, UK}
\footnote{Research supported by EPSRC grant EP/103372X/1}
\& Amanda Turner
\footnote{Department of Mathematics and Statistics,
Lancaster University,
Lancaster,
LA1 4YF, UK}}

\vspace{0.2in}
\small
\today

\end{center}
\begin{abstract}
We establish some scaling limits for a model of planar aggregation.
The model is described by the composition of a sequence of independent and identically
distributed random conformal maps, each corresponding to the addition of one particle.
We study the limit of small particle size and rapid aggregation.
The process of growing clusters converges, in the sense of Carath\' eodory, to an inflating disc.
A more refined analysis reveals, within the cluster, a tree structure of branching fingers, whose radial component increases
deterministically with time.
The arguments of any finite sample of fingers, tracked \bg inwards\eb, perform coalescing Brownian motions.
The arguments of any finite sample of gaps between the fingers, tracked \bg outwards\eb,
also perform coalescing Brownian motions.
These properties are closely related to the evolution of harmonic measure on the boundary of the cluster,
which is shown to converge to the Brownian web.
\end{abstract}
\vspace{0.2in}


\section{Introduction}
Consider an increasing sequence $(K_n:n\ge0)$ of compact subsets of the complex plane,
starting from the closed unit disc $K_0$ centred at $0$.
Set $D_n=(\C\cup\{\infty\})\sm K_n$ and assume that $D_n$ is simply connected.
Write $K_n$ as a disjoint union $K_0\cup P_1\cup\dots\cup P_n$.
Think of $K_n$ as a cluster formed by attaching a sequence of particles $P_1,\dots,P_n$ to $K_0$.
By the Riemann mapping theorem, there is a unique normalized conformal map $\Phi_n:D_0\to D_n$.
Here, by normalized we mean that $\Phi_n(z)=e^{c_n}z+O(1)$ as $|z|\to\infty$ for some $c_n\in\R$.
By a conformal map $D_0\to D_n$ we always mean a conformal isomorphism, in particular a bijection.
The constant $c_n$ is the logarithmic capacity $\cp(K_n)$ and the sequence $(c_n:n\ge0)$ is increasing.
We can write $\Phi_n=F_1\circ\dots\circ F_n$, where each $F_n$ is a normalized conformal map from $D_0$
to a neighbourhood of $\infty$ in $D_0$.
Moreover, any sequence $(F_n:n\in\N)$ of such conformal maps is associated to such a sequence
of sets $(K_n:n\ge0)$ in this way.

Hasting and Levitov \cite{HL} introduced a family of models for random planar growth, indexed by a parameter $\a\in[0,2]$.
We shall study a version of the case $\a=0$, which may be described as follows.
Let $P$ be a non-empty and connected subset of $D_0$, having $1$ as a limit point. Set $K=K_0\cup P$ and $D=(\C\cup\{\infty\})\sm K$.
Assume that $K$ is compact and that $D$ is simply connected.
We think of $P$ as a particle attached to $K_0$ at $1$.
For example, $P$ could be a disc of diameter $\d$ tangent to $K_0$ at $1$, or a line segment $(1,1+\d]$.
We sometimes allow the case where $P$ has other limit points in $K_0$, for example $P=\{z\in D_0:|z-1|\le\d\}$,
but always give $1$ the preferred status of attachment point.
Write $F$ for the unique normalized conformal map $D_0\to D$ and set $c=\cp(K)$.
\bg
We assume throughout that $F$ extends continuously to the closure $\bar D_0$. This is known to hold if and only if
$K$ is locally connected.
\eg
Let $(\Th_n:n\in\N)$ be a sequence of independent random variables, each uniformly distributed on $[0,2\pi)$.
Define for $n\ge1$
\begin{equation}\label{PHIN}
F_n(z)=e^{i\Th_n}F(e^{-i\Th_n}z),\q \Phi_n=F_1\circ\dots\circ F_n.
\end{equation}
Write $(K_n:n\in\N)$ and $(P_n:n\in\N)$ for the associated sequences of random clusters and particles.

Note that $\cp(K_n)=cn$.
Note also that $P_{n+1}=\Phi_n(e^{i\Th_{n+1}}P)$.
Since harmonic measure is conformally invariant, conditional on $K_n$,
the random point $\Phi_n(e^{i\Th_{n+1}})$ at which
$P_{n+1}$ is attached to $K_n$ is distributed on the boundary of $K_n$
according to the normalized harmonic measure from infinity.
However $P_{n+1}$ is not a simple copy of $P$, as would be natural in a model of diffusion limited
aggregation, but is distorted\footnote{If we suppose (unrealistically) that $\Phi_n'$ is nearly constant on the scale of $P$, then a rough
compensation for the distortion would be achieved by replacing $P$ in the definition of $P_{n+1}$
by a scaled copy of diameter $\d_{n+1}=|\Phi_n'(e^{i\Th_{n+1}})|^{-1}\d$.
More generally, we could interpolate between these models by taking $\d_{n+1}=|\Phi_n'(e^{i\Th_{n+1}})|^{-\a/2}\d$ for some fixed $\a\in[0,2]$.
This is the family proposed by Hastings and Levitov.}
by the map $\Phi_n$.

We obtain results which describe the limiting
behaviour of the growing cluster when the basic particle $P$ has small diameter $\d$,
identifying both its overall shape and the distribution of random structures of `fingers' and `gaps'.
Some of these results are stated in Section \ref{illus}.
The results are accompanied by illustrations of typical clusters for certain cases of the model.
We need some basic estimates for conformal maps, which are derived in Section \ref{SBE}.
A simplifying feature of the case $\a=0$ is that fact that, for $\G_n=\Phi^{-1}_n$, the process $(\G_n(z):n\ge0)$ is Markov, for all $z\in D_0$.
This enables us to do
a fluid limit analysis in Section \ref{FLA} for the random flows $\G_n$ as the particles become small,
showing that after adding $n$ particles, the cluster fills out a disc of radius $e^{cn}$, with only small holes.
In Section \ref{HMLP}, we obtain some further estimates which show that the harmonic measure from infinity
on the boundary of the cluster is concentrated near the circle of radius $e^{cn}$ and spread out evenly around the circle.
We also bound the distortion of individual particles. Section \ref{WC} reviews some weak approximation theorems for the
coalescing Brownian flow from \cite{NT1}. These are then applied to the flow of harmonic measure on the cluster
boundary in Section \ref{SLAM}. In conjunction with the results of Section \ref{HMLP}, this finally allows us to
identify the weak limit of the fingers and gaps.

\section{Review of related work}\label{RRW}
\bb
There has been strong interest in models for the random growth of clusters over the last 50 years. Early models were often set up
on a lattice, such as the Eden model \cite{Eden}, Witten and Sander's diffusion limited aggregation (DLA) \cite{W+S},
and the family of dielectric breakdown models of Niemeyer et al. \cite{NPW}. The primary interest in these and other related processes has been in the
asymptotic behaviour of large clusters.

Computational investigations of these lattice based models have revealed
structures, of fractal type, which in some cases resemble natural
phenomena. However, such investigations have also shown sensitivity to details of implementation,
in particular to the geometry of the underlying lattice. For example, in \cite{BBRT} and \cite{MBRS} different fractal dimensions are obtained for DLA constructed with different lattice dependencies. This suggests that lattice-based models may not be the most effective way to describe these physical structures. In addition, lattice based models have proved difficult to analyse. There are few notable
mathematical results, with the exception of Kesten's 1987 growth estimate for DLA \cite{Kesten}, and there is much that remains to be understood about the large-scale behaviour of these models and in particular about the structure of fingers which is characteristically observed.

In 1998, Hastings and Levitov \cite{HL} formulated a family of continuum growth models
in terms of sequences of iterated conformal maps, indexed by a parameter $\a\in[0,2]$.
They argue, by comparing local growth rates, that their models share features with lattice
dielectric breakdown in the range $\a\in[1,2]$, so that $\a=1$ corresponds to the Eden model, and $\a=2$ to DLA.
Further exploration of this relation is discussed in the survey paper by Bazant and Crowdy \cite{B+C}.

The Hastings--Levitov family of models has been discussed extensively in the physics literature from a numerical point of view. In their original paper, Hastings and Levitov found experimental evidence of a phase transition at $\a=1$, and further studies can be seen in, for example, \cite{David} where estimates for the fractal dimensions of clusters are obtained, \cite{JLMP} where the multifractal properties of harmonic measure on the cluster are explored, and \cite{Hastings01} where the dependence of the fractal dimension on $\alpha$ is investigated.

Although this conformal mapping approach to planar random growth processes has proved more tractable than the lattice approach, there have been few rigorous mathematical results, particularly in the case $\a>0$.
Carleson and Makarov \cite{C+M}, in 2001, obtained a growth estimate for a deterministic analogue
of the DLA model.
In 2005, Rohde and Zinsmeister \cite{RZ} considered
the case $\alpha=0$ in the Hastings--Levitov family.
They established a long-time scaling limit, for fixed particle size and showed that the limit law was supported
on clusters of dimension $1$.
They also gave estimates for the dimension of the limit sets in the case of general $\alpha$,
and discussed limits of deterministic variants. Recently, Johansson Viklund, Sola and Turner \cite{JST} studied an anisotropic version of the Hastings--Levitov model in the $\a=0$ case, and established deterministic scaling limits for the macroscopic shape and evolution of harmonic measure on the cluster boundary.

In this paper, we also consider the case $\alpha=0$ but in the limiting regime where the particle
diameter $\d$ becomes small and where the size of the cluster is of order $1$ or larger. We obtain a precise description of the macroscopic shape and growth dynamics of these clusters, as well as a fine scale description of the underlying branching structure. In the process of obtaining these results, we show that the evolution of harmonic measure on the cluster boundary converges to the coalescing Brownian flow, also known as the Brownian web \cite{FINR}.
An early version of some parts of the present paper, along with its companion paper \cite{NT1}, appeared in \cite{NT}.
\eb

\section{Statement of results}\label{illus}
We state here our main results on the shape and structure of the Hastings--Levitov cluster.
Our main result on the harmonic measure flow, which cannot be stated so directly, is Theorem \ref{HMFL}.
For simplicity, we assume in this section that the basic particle $P$ is either a slit $(1,1+\d]$
or a disc $\{|z-1-\d/2|\le\d/2\}$, and that $\d\in(0,1/3]$.
\bg
We shall prove our results under some general conditions (\ref{D13}),(\ref{D14}),(\ref{HMCO}) on the basic particle $P$,
which can be readiliy checked for the slit and disc models.
We shall see that under one of these conditions (\ref{D13}) the logarithmic capacity $c=\cp(K)=\log F'(\infty)$ of $K$ satisfies $\d^2/6\le c\le 3\d^2/4$.
\eg
Our first result expresses that the cluster $K_n$ is contained in a disc of approximate radius
$e^{cn}$ and fills out that disc with only small holes. Moreover, there is a rough correspondence between the time
at which a particle arrives and its distance from the origin.
\begin{theorem}\label{BALL}
Consider for $\ve\in(0,1]$ and $m\in\N$ the event $\O[m,\ve]$ specified by the following conditions:
for all $n\le m$ and all $n'\ge m+1$,
$$
|z-e^{cn+i\Th_n}|\le\ve e^{cn}\q\text{for all $z\in P_n$}
$$
and
$$
\dist(w,K_n)\le\ve e^{cn}\q\text{whenever $|w|\le e^{cn}$}
$$
and
$$
|z|\ge(1-\ve)e^{cm}\q\text{for all $z\in P_{n'}$}.
$$
Assume that \bg $\ve=\d^{2/3}(\log(1/\d))^8$ \eb and $m=\lfloor\d^{-6}\rfloor$. Then
$\PP(\O[m,\ve])\to1$ as $\d\to0$.
\end{theorem}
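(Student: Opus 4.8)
The plan is to analyse the reversed maps $\G_n = \Phi_n^{-1}$, exploiting the Markov property of $(\G_n(z):n\ge0)$ for fixed $z$. For a point $w$ with $|w|$ moderately large, $\G_n(w)$ evolves by the recursion $\G_{n+1}(w) = e^{-i\Th_{n+1}}F^{-1}(e^{i\Th_{n+1}}\G_n(w))$ as long as $\G_n(w)$ stays in $D_0$; since $F^{-1}$ contracts modulus by roughly $e^{-c}$ near the unit circle (with $c\asymp\d^2$), the quantity $\log|\G_n(w)|$ behaves like a supermartingale with drift $-c$ and small increments, plus a controllable error coming from the distortion of $F$ at scale $\d$. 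First I would use the conformal estimates of Section \ref{SBE} to get quantitative bounds: that $\big|\log|F^{-1}(\zeta)| - (\log|\zeta| - c)\big|$ and the analogous bound on the argument are $O(\d^{2}/\mathrm{dist}(\zeta,K_0))$-type quantities, uniformly for $\zeta$ not too close to the attachment region, and that $F^{-1}$ moves points by at most $O(\d)$. These feed the fluid-limit computation of Section \ref{FLA}.

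Next I would run the fluid limit: writing $Z_n^w = \G_n(w)$, show that $\log|Z_n^w| + cn$ is, up to $n\le m$, within $\ve/2$ of $\log|w|$ with probability $\to1$, by a martingale/Azuma argument on the compensated sum of increments, using that the number of steps is $m = \lfloor \d^{-6}\rfloor$, each increment of $\log|Z|$ is $O(\d)$ in typical position, and the compensator per step is $\sim c = O(\d^2)$, so fluctuations over $m$ steps are $O(\d\sqrt m \,)=O(\d^{-2})$ while... — here one must be careful: the scaling is arranged precisely so that errors are controlled in the ratio $\log|Z_n^w| - \log|w| \approx -cn$, i.e. one tracks the \emph{relative} error, and the choice $\ve = \d^{2/3}(\log(1/\d))^8$ with $cm\asymp \d^2\cdot\d^{-6}=\d^{-4}$ is what makes the error terms (which accumulate like $\d\cdot(\text{number of visits near }K_0)$ times logarithmic correction factors) negligible compared to $\ve e^{cn}$. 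The three assertions defining $\O[m,\ve]$ then follow: (i) the bound on $P_n = \Phi_{n-1}(e^{i\Th_n}P)$ being close to $e^{cn+i\Th_n}$ follows from $|\Phi_{n-1}'|$ being close to $e^{c(n-1)}$ on the relevant scale together with the distortion bound; (ii) the space-filling statement $\dist(w,K_n)\le\ve e^{cn}$ for $|w|\le e^{cn}$ follows because if $w\notin K_n$ then $\G_n$ is defined at $w$ and $|\G_n(w)|\ge1$, forcing $\log|w|\ge cn - (\text{error})$, i.e. $|w|\ge (1-\ve)e^{cn}$, the contrapositive; (iii) the statement that later particles $P_{n'}$, $n'\ge m+1$, all have $|z|\ge(1-\ve)e^{cm}$ follows since $P_{n'}\subset K_{n'}\sm K_m$ and any such $z$ satisfies $\G_m(z)$ undefined or of modulus $<1$... more precisely $z\notin K_m$ would give $|\G_m(z)|\ge1$ hence $|z|\ge(1-\ve)e^{cm}$, and $z\in P_{n'}\subset D_m^c$, done the same way.

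To make (i)--(iii) uniform in the relevant ranges of $w$ and $n$, I would combine the pointwise fluid-limit estimate with a union bound over an $O(\mathrm{poly}(1/\d))$-sized net of test points $w$ and times $n\le m$, using that the exceptional probability for each point decays faster than any polynomial in $\d$ (e.g.\ $\exp(-\mathrm{polylog}(1/\d))$ from the Azuma bound, given the generous power $(\log(1/\d))^8$ in $\ve$), and a short continuity/monotonicity argument to pass from the net to all $w$ and all $n$. The main obstacle, and the place where the delicate choice of $\ve$ and $m$ is really used, is controlling the contribution to the error from the times when $\G_n(w)$ comes close to the attachment point $1$ (equivalently, when $w$ is close to $\partial K_n$): there the distortion estimates degrade, individual increments of $\log|\G_n(w)|$ can be as large as $O(\d)$ rather than $O(\d^2)$, and one must show such near-boundary excursions are sufficiently rare — this is where the $O(1)$ displacement bound on $F^{-1}$ and a careful accounting of how long the process spends within distance $\d^{1-o(1)}$ of the unit circle come in, and it is the technically hardest part of the argument.
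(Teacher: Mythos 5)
Your first half --- treating $(\tilde\G_n(z))_{n\ge0}$ as a Markov process, compensating by $cn$, and running an Azuma/Kolmogorov argument with the estimates of Section \ref{SBE} --- is exactly the paper's Proposition \ref{FLUD}, and the role of the exponent $2/3$ (so that $\ve^3/c\to\infty$) is correctly identified. The gaps are in how you pass from that fluid limit to the three defining conditions of $\O[m,\ve]$, and they are genuine.

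First, the contrapositive you propose for condition (ii) is false as stated: ``if $w\notin K_n$ then $|\G_n(w)|\ge1$, forcing $|w|\ge(1-\ve)e^{cn}$'' would mean $K_n$ contains the whole disc of radius $(1-\ve)e^{cn}$, which it certainly does not --- the cluster is riddled with gaps, and the theorem only asserts that every point is within $\ve e^{cn}$ \emph{of} $K_n$. Moreover the inference requires the estimate $|\tilde\G_n(z)-z+cn|<\ve$ at the point $w$ itself, whereas the fluid limit is only established for $\re(z)\ge cn+4\ve$, i.e.\ strictly outside the region in question. The paper instead works with the forward map: on $\O(m,\ve)$ one bounds $|\tilde\Phi_n'|$ on the line $\re(z)=6\ve$ by Cauchy's formula and then applies Koebe's $1/4$ theorem to conclude $d(R+iy,\partial\tilde D_n)\le 56\ve$ for $R\le cn$ (estimate (\ref{NBH})). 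Some argument of this interior type is unavoidable.

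Second, for conditions (i) and (iii) the fluid limit and derivative bounds cannot reach the particles: $P_n=\Phi_{n-1}(e^{i\Th_n}P)$ lives at distance $O(\d)$ from the unit circle, far inside the band $\re(z)<5\ve$ where you have no control of $\Phi_{n-1}$, and a new particle could a priori be dragged deep into a fjord of the cluster. You flag this as the hardest point, but ``accounting for time spent near the boundary'' is not the right mechanism --- the issue is not the occupation time of a single trajectory but the harmonic measure of the deep regions at the moment of attachment. The paper resolves it probabilistically: the fjords below depth $\nu\ve$ have harmonic measure at most $e^{-\nu/C}$ by an iterated Beurling estimate (Proposition \ref{NOTLATE}), the number of fjords is controlled using the extra hypotheses (\ref{D14}) and (\ref{HMCO}) on the particle (which your proposal never invokes), and for $n'\ge m+1$ one needs the geometric decay $\E(H_n)\le(1-\d/C)^n\mu$ of Lemma \ref{NOT} to sum over all future times. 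Separately, the angular statement in (i) requires its own estimate (Proposition \ref{OMNE}), since the fluid limit says nothing about $\im(\tilde\G_n(z))$ for points that are absorbed into the cluster; this is done by a monotone domination of the argument process by a random walk with increments $g_0^*$ and an exponential moment bound. Without these three ingredients the proof does not close.
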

\bg This result is a special case of Theorem \ref{BALLG} below.
Note that $\O[m,\ve]$ is decreasing in $m$ and increasing in $\ve$. \eg
We have made some effort to maximise the power $2/3$ in this statement. It will be crucial later
that $2/3>1/2$. \bb We shall take particular interest in the case where $m$ is of order $\d^{-2}$ and in the case where $m$ is of order $\d^{-3}$,
when the diameter of the cluster $K_m$ is of order $1$ and $\d^{-1}$ respectively\eb.  We have
not attempted to optimise the power \bg $8$ \eb in the logarithm.

In Figure \ref{DLAfig}, we present some realizations of the cluster when $P$ is
a slit\footnote{\bb The normalized conformal map $G=F^{-1}: D \to D_0$ can be obtained in this case as $\phi^{-1} \circ g_1 \circ \phi$, where $\phi$ takes $D_0$ to the upper half plane $H_0$ by $\phi(z)=i(z-1)/(z+1)$ and $g_1(z)=\sqrt{(z^2 + t)/(1-t)}$ takes $H=H_0 \setminus (0,i \sqrt{t}]$ to $H_0$, where $t=\d^2/(2+\d)^2$. A straightforward calculation gives $c=c(\d)=-\log G'(\infty)=-\log(1-t) \asymp \d^2/4$. \eb
}
$(1,1+\d]$, for various values of $\d$.
\begin{figure}[p]
  \vspace{-20pt}
    \subfigure[The cluster after a few arrivals with $\d=1$.]
      {\label{DLAfew}
      \epsfig{file=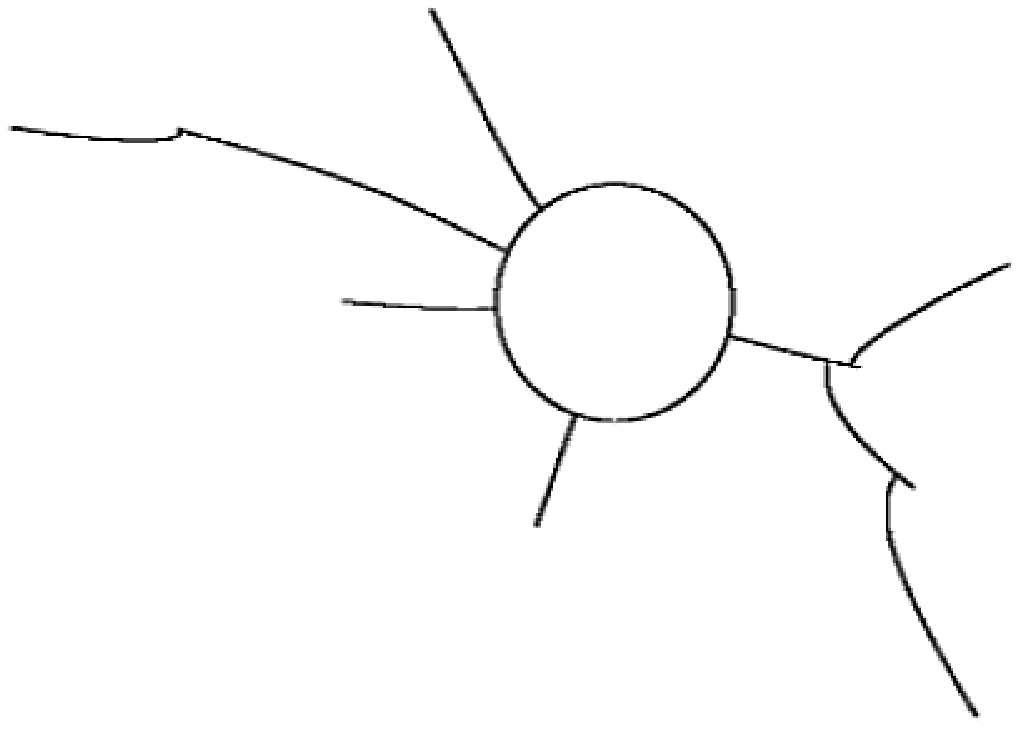,width=5.8cm}}
    \hfill
    \subfigure[The cluster after 100 arrivals with $\d=1$.]
      {\label{DLAmany}
      \epsfig{file=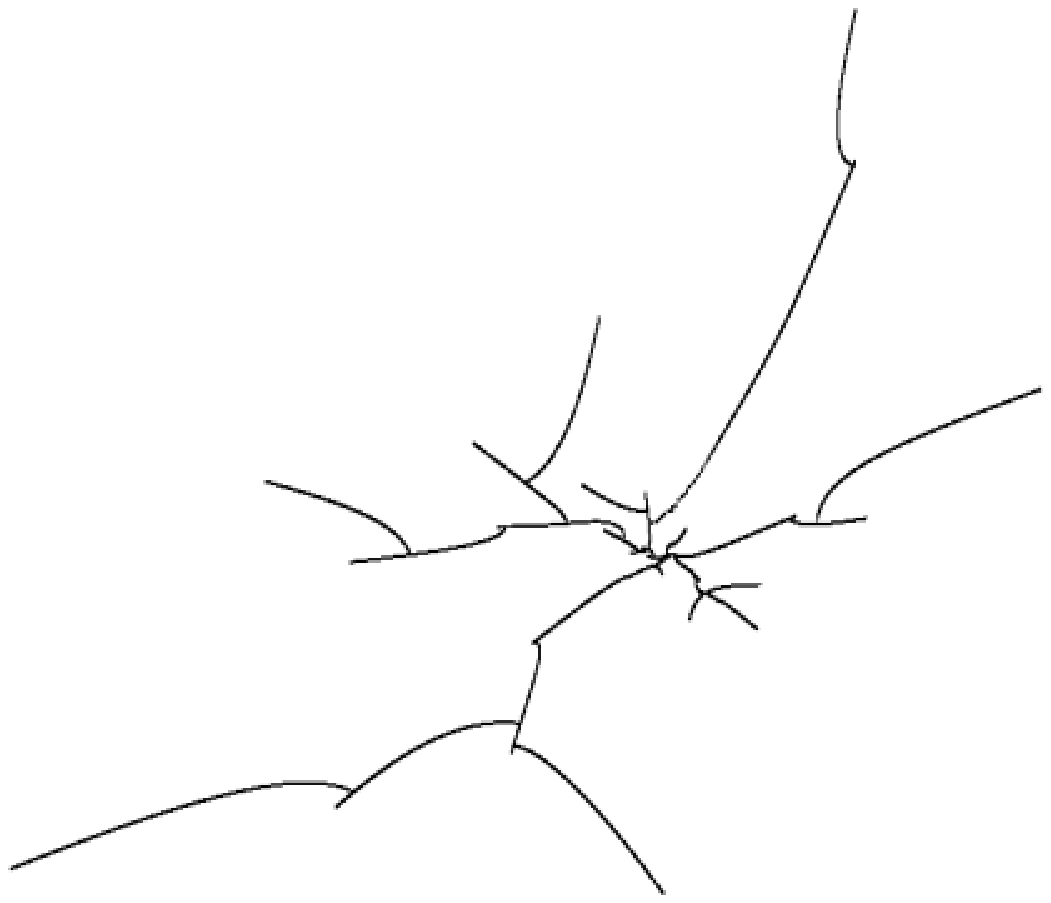,width=5.8cm}}
    \subfigure[The cluster after 800 arrivals with $\d=0.1$.]
      {\label{slit10}
      \epsfig{file=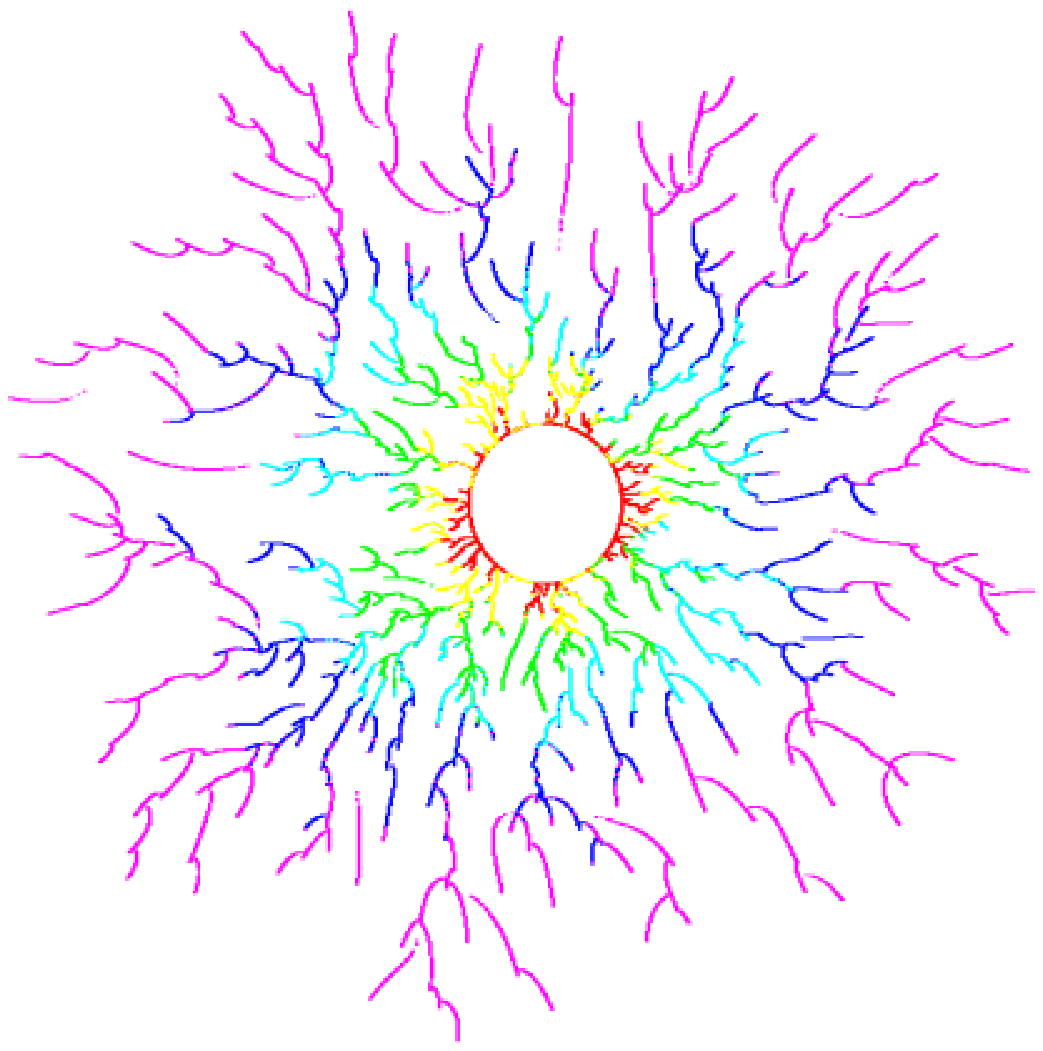,width=5.8cm}}
    \hfill
    \subfigure[The cluster after 5000 arrivals with $\d=0.04$.]
      {\label{slit25}
      \epsfig{file=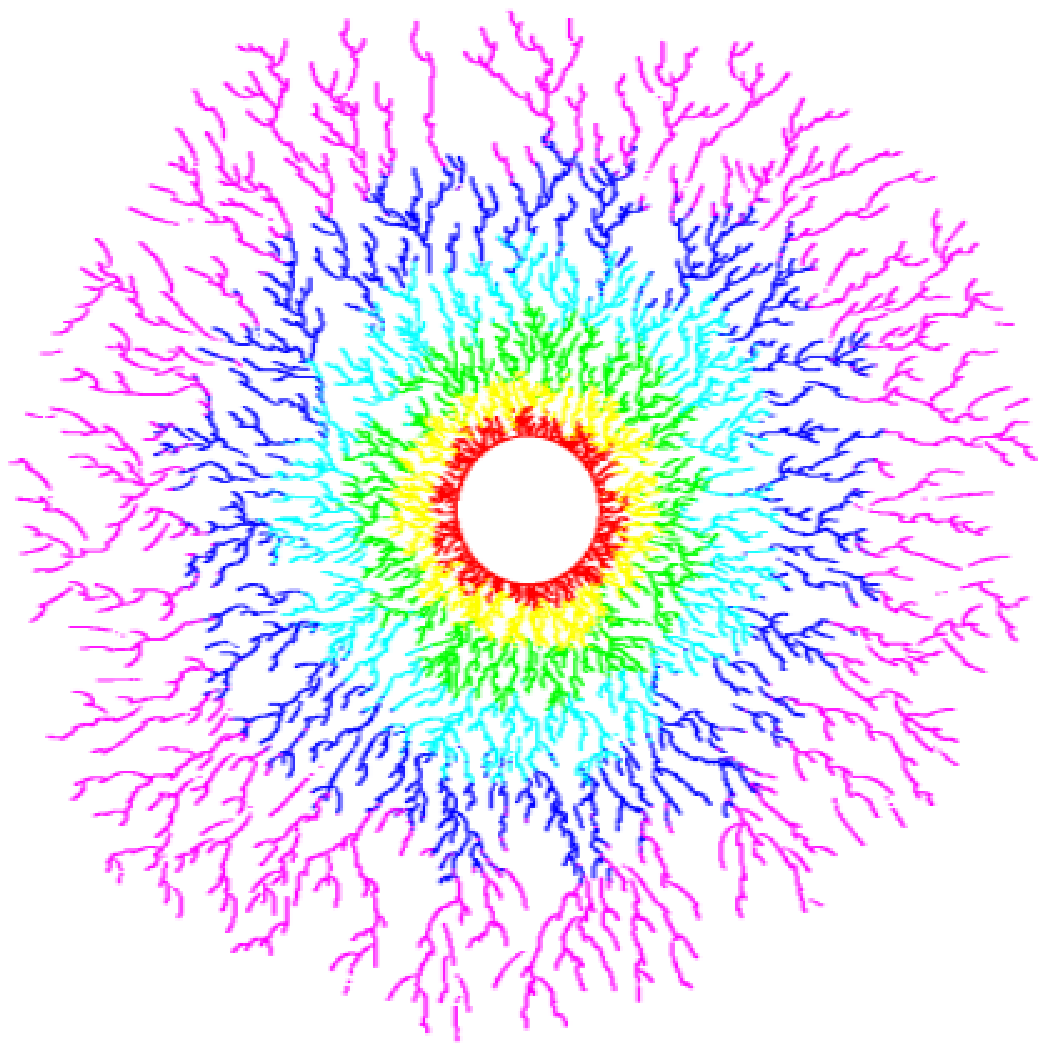,width=5.8cm}}
    \subfigure[The cluster after 20000 arrivals with $\d=0.02$.]
      {\label{slit50}
      \epsfig{file=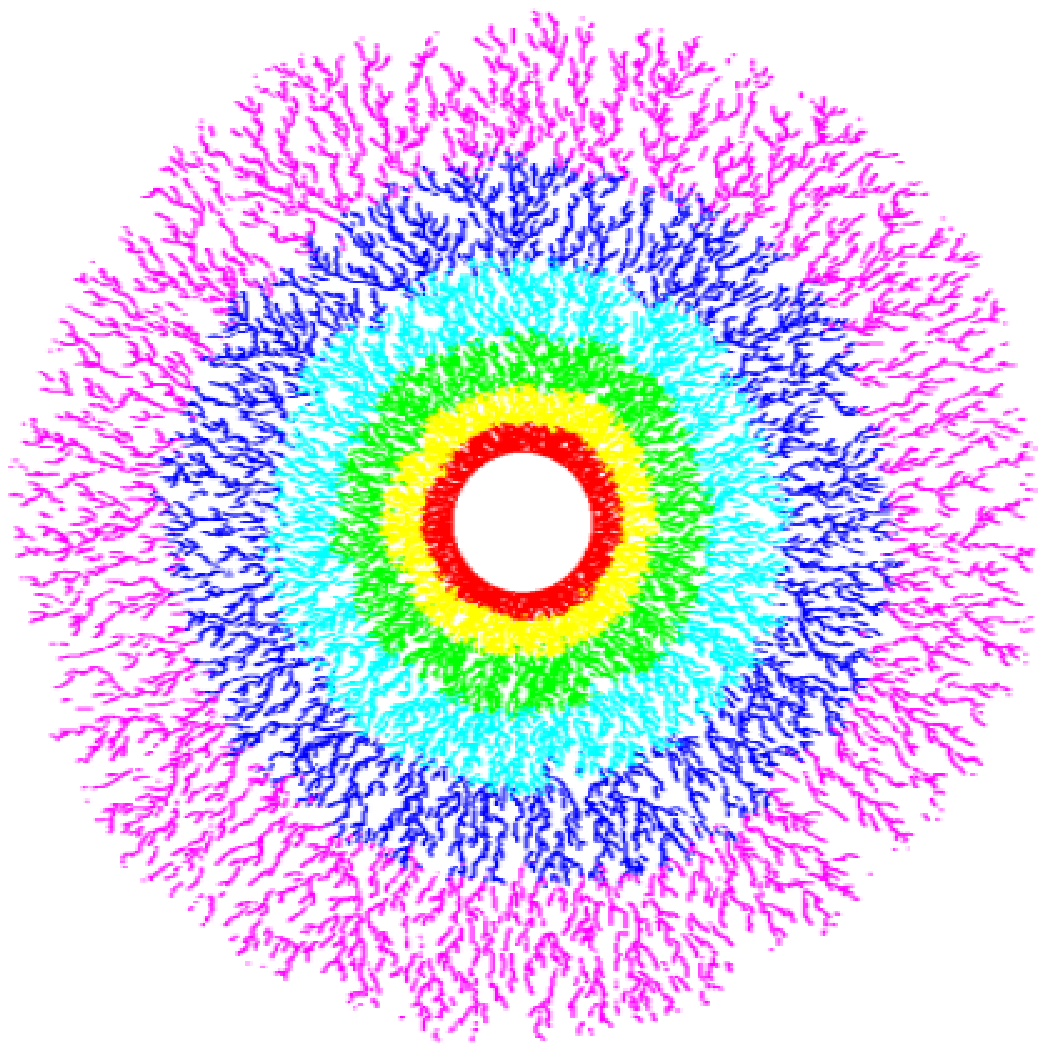,width=5.8cm}}
    \hfill
    \subfigure[Trajectories of $\tilde\G_n(e^{2\pi ix})/(2\pi i)$ for $\d=0.02$, with $t=n/10^6$.]
      {\label{flow50}
      \epsfig{file=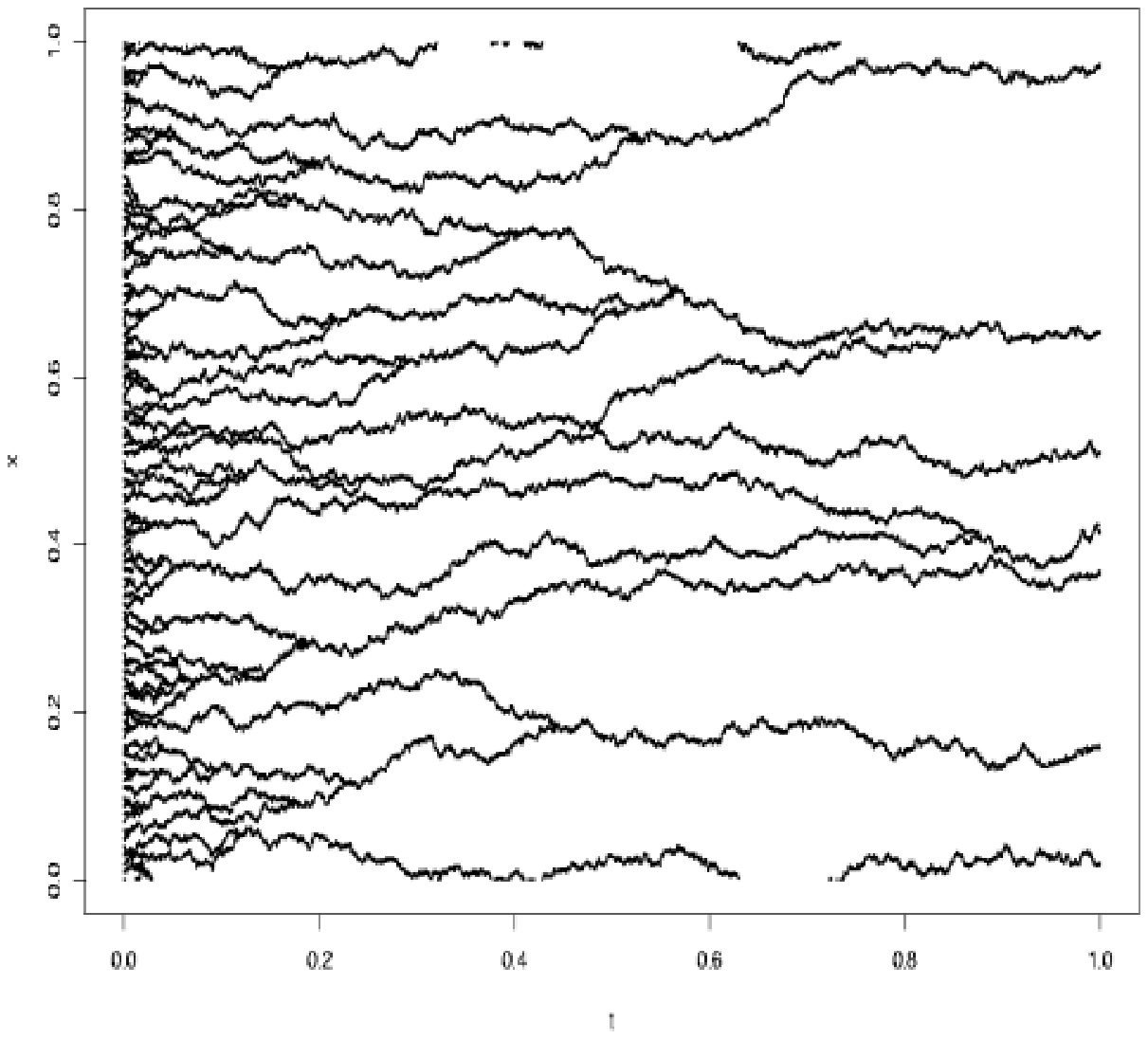,width=5.8cm}}
  \caption{\textsl{The slit case of HL$(0)$}}
  \label{DLAfig}
\end{figure}
We observe in Figure \ref{DLAmany}, when $\d=1$, that incoming particles are markedly distorted
and that particles arriving later tend to be larger. This effect is diminished
when we examine smaller values of $\d$. In Figure \ref{slit50}, the cluster is a rough disc,
as predicted by Theorem \ref{BALL} but with
some sort of internal structure. The colours label arrivals in different epochs, showing that
there is a close relationship between the time of arrival and the distance
from the origin at which a particle sticks, as in Theorem \ref{BALL}.
Figure \ref{flow50} focuses on the motion of points
on the boundary of the unit circle, under the inverse map $\G_n=\Phi_n^{-1}$
and over a longer timescale than for the other simulations.
This motion suggests the behaviour of coalescing Brownian motions,
which is confirmed in Theorem \ref{HMFL} below.

We now fix $N\in\N$ and state two results describing the internal geometry of the cluster $K_N$ in terms of coalescing Brownian motions,
which will follow from Theorems \ref{BALL} and \ref{HMFL}.
Define
$$
\tilde K_n=\{z\in\C:e^z\in K_n\},\q \tilde D_n=\{z\in\C:e^z\in D_n\}
$$
and determine $\rho=\rho(P)\in(0,\infty)$ by
$$
\frac\rho{2\pi}\int_0^{2\pi}(g(\th)-\th)^2d\th=1
$$
where $g$ is the unique continuous map $(0,2\pi)\to(0,2\pi)$ such that $g(\pi)=\pi$ and $G(e^{i\th})=e^{ig(\th)}$ for all $\th$.
We shall show in Proposition \ref{TGES} that $\d^{-3}/C\le\rho\le C\d^{-3}$ for an absolute constant $C<\infty$.
Note that $K_N$ has a natural notion of ancestry for its constituent particles: we say that $P_k$
is the parent of $P_{n+1}$ if $\Phi_n(e^{i\Th_{n+1}})\in P_k$.
This notion is inherited by the covering cluster $\tilde K_N$ and will allow us to identify path-like structures
within the cluster.
For $\re(z)\ge0$, denote by $\tilde P_0(z)$ the closest particle to $z$ in $\tilde K_N$,
and recursively denote by $\tilde P_m(z)$ the parent of $\tilde P_{m-1}(z)$ until $m=m(z)$ when $\tilde P_{m(z)}(z)$ is
attached to the imaginary axis, at $a(z)$ say. Consider the compact set
$$
\fing(z)=\{a(z)\}\cup\bigcup_{m=0}^{m(z)}\tilde P_m(z).
$$
We shall describe also the structure of the complementary set $\tilde D_N$, using a choice of paths in this set.
The notion of ancestry is not available, so we look instead for paths in the gaps which lead mainly outwards,
that is to the right in the logarithmic picture.
In order to enforce this outwards property, we impose a condition of minimal length, which requires a suitable completion
of the set of paths.
By a {\em gap path} we mean a rectifiable path $(p_\t)_{\t\ge0}$ in $\C$, parametrized by arc length, such that
$\re(p_\t)\to\infty$ as $\t\to\infty$ and
such that, for some continuous map $h:[0,\infty)\times[0,1]\to\C$ and for all $\t\ge0$, we have $p_\t=h(\t,1)$
and $h(\t,t)\in\tilde D_N$ for all $t\in[0,1)$.
For $R>0$, define $L_R(p)=\inf\{\t\ge0:\re(p_\t)=R\}$.
Write $p_0(z)$ for the closest point to $z$ which is not in the interior of $\tilde K_N$.
Since $\tilde D_N$ is simply connected and $K_N$ is compact, there exists a unique gap path $p(z)$ starting from $p_0(z)$ and minimizing $L_R(p)$
over all gap paths starting from $p_0(z)$, for all sufficiently large $R$.
\bg
The path $p(z)$ may be thought of as a long piece of thread outside the cluster, with one end attached to $p_0(z)$ and drawn tight by pulling from the right.
\eg
Set
$$
\gap(z)=\{p_\t(z):\t\ge0\}.
$$
Note that, by minimality, for all $\t_1,\t_2\ge0$ with $\t_1<\t_2$
and such that the open line segment $I=(p_{\t_1}(z),p_{\t_2}(z))$ is contained in $\tilde D_N$, we have $p_\t(z)\in I$ for all $\t\in(\t_1,\t_2)$.
\bb These definitions are illustrated in Figure \ref{fingergapfig}. Both fingers and gaps depend implicitly on $N$, although we have suppressed this in the notation.
\eb

\begin{figure}[ht]
\centering
\epsfig{file=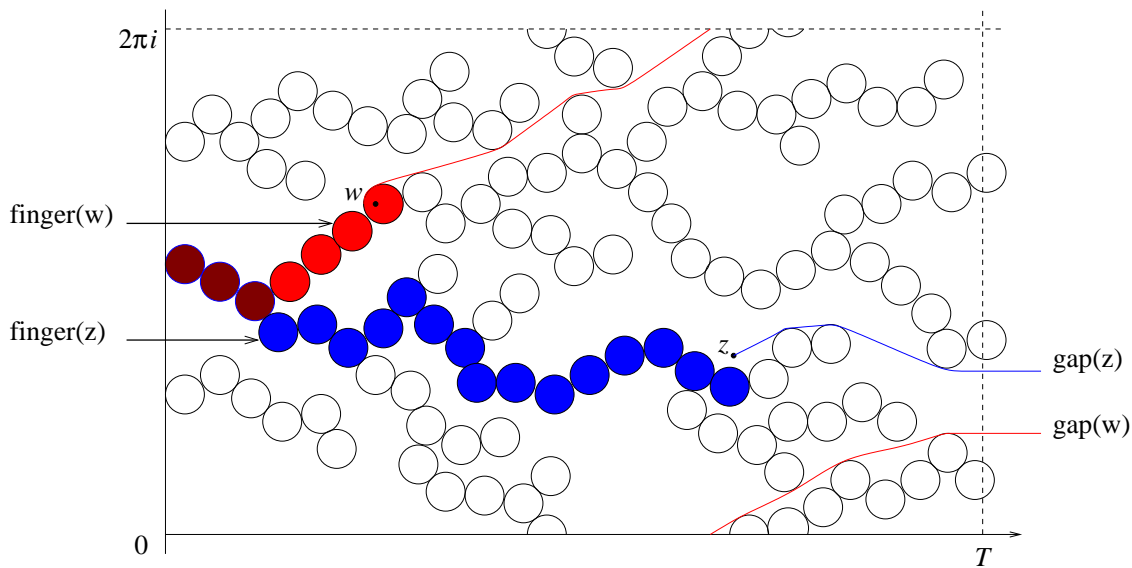,width=15cm}
\caption{\textsl{ Diagram illustrating fingers and gaps in $\tilde K_N$ (repeating periodically). This is only a representation and in general the particles will be distorted both by the conformal mapping and by the logarithmic transformation.}}
\label{fingergapfig}
\end{figure}

In order to capture the limiting fluctuations of the fingers and gaps we have to rescale.
We do this in two ways, defining horizontal and vertical scaling operators $\s$ and $\bar\s$ by
$$
\s(r+i\th)=(\d^*r,\th),\q \bar\s(r+i\th)=(r,\th/\sqrt{\d^*}),\q r\ge0,\q \th\in\R
$$
where $\d^*=(\rho c)^{-1}$. Note that $\d/C\le\d^*\le C\d$ for an absolute constant $C<\infty$. Also
$\bar\s=\s_{\d^*}\circ\s$ where $\s_{\d^*}$ is the diffusive scaling
$$
\s_{\d^*}(s,x)=(s/\d^*,x/\sqrt{\d^*}),\q s\ge0,\q x\in\R.
$$
The horizontal scaling identifies global random behaviour in the fingers and gaps over very long time scales, whereas
the vertical scaling identifies local fluctuations in the fingers and gaps while the size of the cluster is of order $1$.

Denote by $\cS$ the space of closed subsets of $[0,\infty)\times\R$, equipped with a local Hausdorff metric.
Define $\F,\GG:[0,\infty)\times\R\to\cS$ and $\bF,\bGG:[0,\infty)\times\R\to\cS$ by
$$
\F=\s\circ\fing\circ\,\s^{-1},\q \GG=\s\circ\gap\circ\,\s^{-1}
$$
$$
\bF=\bar\s\circ\fing\circ\,{\bar\s}^{-1},\q \bGG=\bar\s\circ\gap\circ\,{\bar\s}^{-1}.
$$
Thus, for $e=(s(e),x(e))$,
$$
\F(e)=\{\s(w):w\in\fing(s(e)/\d^*+ix(e))\},\q \GG(e)=\{\s(w):w\in\gap(s(e)/\d^*+ix(e))\}
$$
$$
\bF(e)=\{\bar\s(w):w\in\fing(s(e)+ix(e)\sqrt{\d^*})\},\q\bGG(e)=\{\bar\s(w):w\in\gap(s(e)+ix(e)\sqrt{\d^*})\}.
$$
We consider $\F(e),\bF(e),\GG(e),\bGG(e)$ as random variables in $\cS$.

We state first the long time result.
Fix $T>0$ and let $E$ be a finite subset of $[0,T]\times\R$.
Take $N=\lfloor \rho T\rfloor$ so that $K_N$ is approximately a disc of radius $e^{T/\d^*}$.
Denote by $\nu_E^P$ and $\eta_E^P$ the respective laws of $(\F(e):e\in E)$ and $(\GG(e):e\in E)$ on $\cS^E$.
Let $(B^e:e\in E)$ be a family of $2\pi$-coalescing Brownian motions, $B^e$ running backwards in time from $x(e)$ at time $s(e)$.
Thus $B^e=(B_t^e:0\le t\le s(e))$ and for all $e,e'\in E$, $B^e$ and $B^{e'}$ are independent
until (time running backwards) their difference is an integer multiple of $2\pi$, at which point it freezes.
Let $(W^e:e\in E)$ be a family of $2\pi$-coalescing Brownian motions,
with $W^e$ running forwards in time from $x(e)$ at time $s(e)$.
Denote by $\nu_E$ and $\eta_E$ the laws on $\cS^E$ of the families of random sets $(\{(t,B_t^e):0\le t\le s(e)\}:e\in E)$
and $(\{(t,W_{t\wedge T}^e):t\ge s(e)\}:e\in E)$.
\begin{theorem}\label{LONG}
We have $\nu_E^P\to\nu_E$ and $\eta_E^P\to\eta_E$ weakly on $\cS^E$ as $\d\to0$.
\end{theorem}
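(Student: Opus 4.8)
The plan is to deduce the theorem from the two main estimates established above: Theorem~\ref{BALL} (together with its generalisation Theorem~\ref{BALLG}), which says that with probability tending to $1$ the cluster $K_N$ is close to the disc of radius $e^{cN}$ and each particle $P_n$ lies, to relative accuracy $\ve=\d^{2/3}(\log(1/\d))^8$, near $e^{cn+i\Th_n}$; and Theorem~\ref{HMFL}, which identifies the scaling limit of the harmonic measure flow on the cluster boundary as the coalescing Brownian flow. I would realise the finger $\fing(z)$ and the gap $\gap(z)$ as, respectively, a backward and a forward flow line of the harmonic measure flow, up to errors that vanish under the horizontal rescaling $\s$, and then read off the two limits from Theorem~\ref{HMFL} and the relation between the Brownian web and its dual. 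Everything takes place on the event $\O[\lfloor\d^{-6}\rfloor,\ve]$; since $N=\lfloor\rho T\rfloor\asymp\d^{-3}\ll\d^{-6}$ and $\O[m,\ve]$ is decreasing in $m$, this event has probability tending to $1$ and on it the cluster $K_N$ and all its particles satisfy the estimates of Theorem~\ref{BALL}.

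\emph{Fingers.} Fix $e=(s(e),x(e))\in E$ and put $z=s(e)/\d^*+ix(e)$. Two features drive the analysis of $\F(e)$. First, the radial coordinate along a finger is asymptotically deterministic: since $\cp(K_n)=cn$, Theorem~\ref{BALL} places the $k$-th particle at log-modulus within $\ve$ of $ck$, while the particle-distortion bounds of Section~\ref{HMLP} show that the arrival indices of successive particles of the ancestral chain $\tilde P_0(z),\dots,\tilde P_{m(z)}(z)$ differ by $O(\ve/\d^2)=o(N)$; hence the log-modulus decreases through the whole range $[0,cN]$ in steps that are $o(1)$ after $\s$, which multiplies the radial coordinate by $\d^*$ and so sends $cN\approx T/\d^*$ to $T$. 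Using Theorem~\ref{BALL} (applied also at a suitable intermediate time) one checks that the closest particle to $z$ in $\tilde K_N$ sits at log-modulus $\approx s(e)/\d^*$ and argument $\approx x(e)$, so $\F(e)$ is, up to $o(1)$, a path over flow-time $[0,s(e)]$ starting near $x(e)$ at flow-time $s(e)$. Secondly, the argument along the finger follows the harmonic measure flow backwards: the parent of $P_{n+1}$ is the particle containing the attachment point $\Phi_n(e^{i\Th_{n+1}})$, so pulling $e^{i\Th_{n+1}}$ back through the maps $F_n,F_{n-1},\dots$ traces one backward line of the harmonic measure flow, and the successive particles of the ancestral chain are exactly the ones it meets. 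The normalisation of $\rho$ makes the per-particle increment of this flow have variance $\frac1{2\pi}\int_0^{2\pi}(g(\th)-\th)^2\,d\th=1/\rho$, so over the $\approx\rho t$ particles accounting for flow-time $t$ the line is diffusive with variance $t$; Theorem~\ref{HMFL} then gives joint convergence of finitely many such backward lines to $2\pi$-coalescing Brownian motions run backwards. Since the map from a path to its graph in $[0,\infty)\times\R$ is continuous into $\cS$, bounding the Hausdorff discrepancy between $\F(e)$ and the graph of the corresponding flow line by the $o(1)$ errors above yields $\nu_E^P\to\nu_E$.

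\emph{Gaps, and the main obstacle.} The argument for $\eta_E^P\to\eta_E$ is dual but genuinely harder, since the gaps carry no notion of ancestry. The gap path $p(z)$ underlying $\GG(e)$ is the taut thread in $\tilde D_N$ pinned near $z$ and pulled to the right, and the hard part will be to show it is forced into the thin channel between two adjacent fingers and so tracks the \emph{forward} flow line of the harmonic measure flow issuing from the point where those two fingers branch. This requires the estimates of Sections~\ref{SBE} and~\ref{HMLP}, bounding the distortion of the buried particles and ruling out large holes, to show the channels between fingers are thin on the scale that survives $\s$, together with the minimality of $p(z)$ and the monotonicity property recorded after its definition, to trap the thread inside such a channel. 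Once the thread reaches log-modulus $\approx cN$ it has left the cluster and runs straight outwards with frozen argument, which accounts for the $t\wedge T$ in the definition of $\eta_E$. The forward flow lines converge, again by Theorem~\ref{HMFL}, to $2\pi$-coalescing Brownian motions run forwards and stopped at $T$; and since forward and backward coalescing flow lines do not cross, the two limits are mutually consistent, in exact analogy with the Brownian web and its dual. Carrying all of these estimates out on the event $\O[\lfloor\d^{-6}\rfloor,\ve]$ completes the proof.
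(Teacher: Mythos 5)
Your overall strategy is exactly the paper's: Theorems \ref{LONG} and \ref{LOCAL} are proved there as immediate corollaries of Theorem \ref{FLOWFING}, whose proof is precisely the combination you describe --- a coupling of the cluster with a coalescing Brownian flow supplied by Theorem \ref{HMFL} together with a standard Skorokhod-type representation, location control of every particle supplied by Theorem \ref{BALLG} on the high-probability event, identification of the ancestral chain with a backward line of the harmonic measure flow via the relation $\Th_{m+1}+2\pi j=\Phi^P_{mn}(\Th_{n+1}+2\pi k)$, and a trapping argument for the minimal gap path near a forward line. Your treatment of the fingers is essentially complete in outline, including the two points that matter: that the radial steps along the ancestral chain are $o(1)$ after the scaling $\s$, and that connectedness of $\fing(z)$ down to the imaginary axis upgrades containment in a neighbourhood of the flow line to closeness in the Hausdorff metric.

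The one substantive gap is the one you flag yourself: the claim that the taut thread ``is forced into the thin channel between two adjacent fingers'' is the conclusion of the hard argument, not an input to it, and you do not carry it out. In the paper this is done in two steps: (i) the minimal path cannot make excursions to the left of the vertical line at radial position $s(e)-\ve'$, because any chord closing such an excursion would straddle a particle whose ancestors reach back to radial positions below $s(e)$, contradicting the location estimates; and (ii) one builds an explicit polygonal barrier $B$ out of points $w_n\in\tilde K_N$ lying at a fixed small distance above (resp.\ below) the forward flow line, and shows $p$ cannot cross $B$: a crossing would, by minimality and the monotonicity property you cite, produce a straight chord of $\tilde D_N$ between two visits to a barrier segment, while any particle obstructing that chord would have a finger disconnected from the imaginary axis by the gap path together with a vertical half-line --- a contradiction. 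Without some such construction, nothing in Theorem \ref{BALL} or Theorem \ref{HMFL} by itself rules out the thread escaping sideways between consecutive barrier particles, so ``trapped between adjacent fingers'' remains an assertion. Your proposal is therefore the right plan, with the decisive step for the gaps left unproved.
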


\bg
Thus, for small $\d$, we can construct on a common probability space, the cluster $K_N$ and
backwards and forwards $2\pi$-coalescing Brownian motions, such that the union
of fingers in $\tilde K_N$ starting from points $s(e)/\d^*+ix(e)$, $e \in E$ is, with probability close to $1$, close in Hausdorff metric
to the set $\bigcup_{e \in E} \{t/\d^*+iB^e_t: 0 \le t \le s(e) \}$,
and hence the union of fingers in $K_N$, starting from points $\exp(s(e)/\d^*+ix(e))$, $e \in E$
looks approximately like the set $\bigcup_{e \in E} \{\exp (t/\d^*+iB^e_t): 0 \le t \le s(e) \}$.
Similarly, the union of gaps in $K_N$, starting from points $\exp(s(e)/\d^*+ix(e))$, $e \in E$ looks
approximately like the set $\bigcup_{e \in E} \{\exp (t/\d^*+iW^e_{t\wedge T}): t \geq s(e) \}$.
A simulation of $\bigcup_{e \in E} \{\exp (t/\d^*+iB^e_t): 0 \le t \le s(e) \}$ and
$\bigcup_{e \in E} \{\exp (t/\d^*+iW^e_t): s(e) \le t \le T \}$ is shown in Figure \ref{globallimitfig}.
\eb

For the local result we take now $N=\lfloor c^{-1}T\rfloor$ so that $K_N$ is approximately a disc of radius $e^T$.
Denote by $\bar\nu_E^P$ and $\bar\eta_E^P$ the laws of $(\bF(e):e\in E)$ and $(\bGG(e):e\in E)$ on $\cS^E$.
Let $(\bar B^e:e\in E)$ be a family of coalescing Brownian motions, $\bar B^e$ running backwards in time from $x(e)$ at time $s(e)$.
Thus $\bar B^e=(\bar B_t^e:0\le t\le s(e))$ and for all $e,e'\in E$, $\bar B^e$ and $\bar B^{e'}$ are independent
until (time running backwards) they collide, at which time they coalesce.
Let $(\bar W^e:e\in E)$ be a family of coalescing Brownian motions,
with $\bar W^e$ running forwards in time from $x(e)$ at time $s(e)$.
Denote by $\bar\nu_E$ and $\bar\eta_E$ the laws on $\cS^E$ of the families of random sets $(\{(t,\bar B_t^e):0\le t\le s(e)\}:e\in E)$
and $(\{(t,\bar W_{t\wedge T}^e):t\ge s(e)\}:e\in E)$.
\begin{theorem}\label{LOCAL}
We have $\bar\nu_E^P\to\bar\nu_E$ and $\bar\eta_E^P\to\bar\eta_E$ weakly on $\cS^E$ as $\d\to0$.
\end{theorem}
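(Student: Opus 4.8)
The plan is to deduce Theorem~\ref{LOCAL} from Theorem~\ref{HMFL} along the same route used for Theorem~\ref{LONG}, with the vertical scaling $\bar\s$ in place of the horizontal scaling $\s$ and with $N=\lfloor c^{-1}T\rfloor$ in place of $\lfloor\rho T\rfloor$. Recall that $\bar\s=\s_{\d^*}\circ\s$ and that $\d^*=(\rho c)^{-1}$, so $\lfloor c^{-1}T\rfloor=\lfloor\rho(\d^* T)\rfloor$; hence the cluster $K_N$ considered here is precisely the cluster of Theorem~\ref{LONG} at time horizon $\d^* T$, and
$$
\bF=\s_{\d^*}\circ\F\circ\s_{\d^*}^{-1},\qquad \bGG=\s_{\d^*}\circ\GG\circ\s_{\d^*}^{-1},
$$
where on the right $\F$ and $\GG$ denote the horizontally scaled finger and gap maps for that horizon $\d^* T$. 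It therefore suffices to run the proof of Theorem~\ref{LONG} at horizon $\d^* T$, with sample set $\s_{\d^*}^{-1}(E)\subset[0,\d^* T]\times\R$, and then apply $\s_{\d^*}$. Since the horizon $\d^* T$ is no longer fixed as $\d\to0$, I would re-run that argument rather than quote the statement; nothing degenerates, because the number of particles $N$ is still of order $\d^{-2}\to\infty$. Under $\s_{\d^*}$, Brownian scale invariance turns a family of $2\pi$-coalescing Brownian motions on $[0,\d^* T]$, started from the configuration $\s_{\d^*}^{-1}(E)$, into a family of $2\pi/\sqrt{\d^*}$-coalescing Brownian motions on $[0,T]$, started from $E$; and since finitely many Brownian motions from fixed points remain bounded on $[0,T]$, with probability tending to $1$ no two of them ever differ by the period $2\pi/\sqrt{\d^*}\to\infty$ before they collide, so this family converges weakly to a family of genuinely coalescing Brownian motions. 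These are the $\bar B^e$ for the fingers and the $\bar W^e$ for the gaps, which identifies the limits as $\bar\nu_E$ and $\bar\eta_E$.

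The two substantive inputs are taken over from the proof of Theorem~\ref{LONG}: (i) the convergence of the angular harmonic measure flow, established through Theorem~\ref{HMFL} and the weak approximation theorems of Section~\ref{WC} (from \cite{NT1}); and (ii) the comparison of the fingers and gaps of $\tilde K_N$ with, respectively, backward and forward trajectories of that flow, controlled by Theorem~\ref{BALL} together with the estimates of Section~\ref{HMLP}. For (i), in the present scaling a single flow line, with its angular coordinate divided by $\sqrt{\d^*}$ and with time measured by $t=cn$, converges to a standard Brownian motion: one angular step has drift $0$ and variance $\tfrac{1}{2\pi}\int_0^{2\pi}(g(\th)-\th)^2\,d\th=\rho^{-1}$, so over the $\lfloor t/c\rfloor$ steps up to macroscopic time $t$ the angular variance is $\approx t/(c\rho)=t\d^*$, which the vertical rescaling restores to $t$. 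A finite family of such rescaled lines then satisfies the non-crossing and tightness hypotheses of the approximation theorem, while the coalescence time of two lines converges to the first hitting time of $0$ by their difference, the next possible coincidence (at difference $\pm2\pi/\sqrt{\d^*}$) having been pushed off to infinity; so the family converges to the coalescing Brownian flow on $\R$, rather than to the $2\pi$-periodic flow obtained under the horizontal scaling.

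For (ii), the events $\O[m,\ve]$ of Theorem~\ref{BALL} are needed in the regime $m=N$ of order $\d^{-2}$, where $K_N$ has diameter of order $1$; since $\O[m,\ve]$ is decreasing in $m$ and $N\le\lfloor\d^{-6}\rfloor$ for small $\d$, these events again have probability tending to $1$. On $\O[m,\ve]$ each particle $P_n$ lies, in the logarithmic picture, within a box of radius $O(\ve)$ about $cn+i\Th_n$ and the holes are of comparable size, so a finger differs from the corresponding backward flow trajectory, and a gap from the corresponding forward flow trajectory, by $O(\ve)$ in both the real and the imaginary parts before rescaling. The point that makes the local statement genuinely more delicate than the global one is that $\bar\s$ magnifies imaginary parts by $(\d^*)^{-1/2}\asymp\d^{-1/2}$, so the imaginary error becomes $O(\ve\,\d^{-1/2})=O\big(\d^{1/6}(\log(1/\d))^8\big)$, which tends to $0$ precisely because the exponent $2/3$ in Theorem~\ref{BALL} exceeds $1/2$ (the reason that bound was emphasised), while the real error $O(\ve)$ is unaffected. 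Combining the weak convergence from (i) with this comparison, which is deterministic on $\O[m,\ve]$, and using that $\s_{\d^*}$ acts continuously on $\cS$, one obtains $\bar\nu_E^P\to\bar\nu_E$ and $\bar\eta_E^P\to\bar\eta_E$. I expect the main obstacle to be exactly this bookkeeping: checking that each error term from Sections~\ref{HMLP}--\ref{SLAM} contributing in the imaginary direction is not merely $o(1)$ but $o(\sqrt{\d^*})$, so that it survives the vertical rescaling --- a quantitative strengthening over what Theorem~\ref{LONG} requires.
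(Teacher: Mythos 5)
Your proposal is correct and follows essentially the paper's route: Theorem \ref{LOCAL} is read off as a marginal of Theorem \ref{FLOWFING}, whose local half combines the second statement of Theorem \ref{HMFL} (convergence of the $\sqrt{\d^*}$-scale disturbance flow to the coalescing Brownian flow on the line, via (\ref{WCRDF}) of \cite{NT1} --- the rigorous form of your ``rescale the circle flow and let the period $2\pi/\sqrt{\d^*}$ escape to infinity'' argument, whose hypotheses $\ve^3\rho\to\infty$ and $\l(g,\ve)\to0$ are what replaces your ``nothing degenerates'') with the comparison of fingers and gaps to backward and forward flow trajectories using Theorem \ref{BALLG}, and you correctly isolate the crucial point that the vertical magnification by $(\d^*)^{-1/2}$ is beaten because $2/3>1/2$. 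The one technical issue you omit is the loss of compactness in the line case --- uniform continuity of $(s,t)\mapsto\bar\Phi_{ts}$ holds only locally in $x$, so the paper introduces a cutoff $R$ and runs all flow comparisons on $\{|x|\le 2R\}$ --- but this is a repair within the same argument, not a different approach.
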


\bg
Thus, for small $\d$, we can construct on a common probability space, the cluster $K_N$ and
backwards and forwards coalescing Brownian motions, such that the union
of fingers in $\tilde K_N$ starting from points
$s(e)+ix(e)\sqrt{\d^*}$, $e \in E$ is, with probability close to $1$, close in Hausdorff metric to the set
$\bigcup_{e \in E} \{t+i \bar B^e_t \sqrt{\d^*}: 0 \le t \le s(e) \}$, and hence the union of fingers in $K_N$,
starting from points $\exp(s(e)+ix(e)\sqrt{\d^*})$, $e \in E$ looks approximately like the set
$\bigcup_{e \in E} \{\exp (t+i\bar B^e_t \sqrt{\d^*}): 0 \le t \le s(e) \}$.
Similarly, the union of gaps in $K_N$, starting from points $\exp(s(e)+ix(e)\sqrt{\d^*})$, $e \in E$ looks
approximately like the set $\bigcup_{e \in E} \{\exp (t+i\bar W^e_{t\wedge T}\sqrt{\d^*}): t \geq s(e) \}$.
A simulation of $\bigcup_{e \in E} \{\exp (t+i\bar B^e_t \sqrt{\d^*}): 0 \le t \le s(e) \}$ and
$\bigcup_{e \in E} \{\exp (t+i\bar W^e_t \sqrt{\d^*}): s(e) \le t \le T \}$ is shown in Figure \ref{locallimitfig}.
\eb

Theorems \ref{LONG} and \ref{LOCAL} are obvious corollaries of
Theorem \ref{FLOWFING}, which identifies also the limiting joint law of fingers and gaps.

\begin{figure}[ht]
    \subfigure[An approximation of a finite set of fingers and gaps in $K_N$, with $N=\lfloor \rho T\rfloor$, when $T=1$ and $\d^*=0.05$.]
      {\label{globallimitfig}
      \epsfig{file=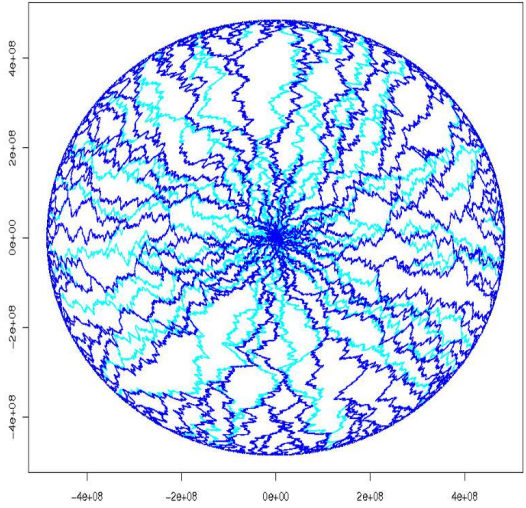,width=7.5cm}}
    \hfill
    \subfigure[An approximation of a finite set of fingers and gaps in $K_N$, with $N=\lfloor c^{-1} T\rfloor$, when $T=1$ and $\d^*=0.01$.]
      {\label{locallimitfig}
      \epsfig{file=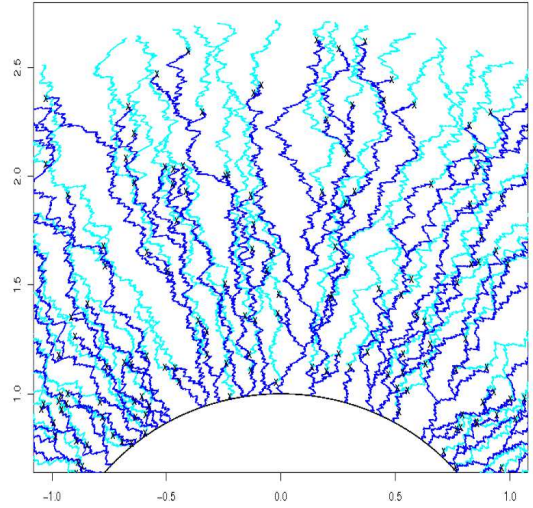,width=7.5cm}}
  \caption{\textsl{Geometric illustration of Theorems \ref{LONG} and \ref{LOCAL}, where fingers are denoted in dark blue, and gaps in light blue.}}
  \label{limitfig}
\end{figure}

\section{Some basic estimates}\label{SBE}
We derive in this section some estimates for quantities associated to the basic particle $P$.
In some special cases one could use instead an explicit calculation. By proving general estimates we are able to
demonstrate some universality for the small-particle limit.
Recall that $K=K_0\cup P$ and $D=(\C\cup\{\infty\})\sm K$, \bg with $K$ compact and locally connected and $D$ simply connected in $\C\cup\{\infty\}$.
\eg
The following assumptions are in force throughout this section
\bb
\begin{equation}\label{D13}
\d\in(0,1/3] \q\text{and}\q P\sse\{z\in\C:|z-1|\le\d\}\q\text{and}\q  1+\d \in P \q\text{and}\q P=\{\bar z : z \in P\}.
\end{equation}
\eb

Consider the map $\psi(z)=\bar z^{-1}$ on $\C\cup\{\infty\}$ by reflection in the unit circle $S$.
Set $\hat P=\psi(P)$ and $\hat D=\psi(D)$, $\hat D_0=\psi(D_0)$.
Define also $P^*=P\cup I\cup\hat P$, where $I$ is the set of limit points of $P$ in $S$, and set $D^*=(\C\cup\{\infty\})\sm P^*$.
By the Riemann mapping theorem, there is a conformal map $\hat G:\hat D\to\hat D_0$
and a constant $c\in\R$ such that $\hat G(z)=e^cz+O(|z|^2)$ as $|z|\to0$, and $\hat G$ and $c$ are unique.
Moreover $\hat G$ extends to a conformal map $G^*:D^*\to(\C\cup\{\infty\})\sm J$ for some interval $J\sse S$,
with $G^*\circ\psi=\psi\circ G^*$ on $D^*$.
Write $G$ for the restriction of $G^*$ to $D$.
Then $G$ is a conformal map $D\to D_0$ and $G(z)=e^{-c}z+O(1)$ as $|z|\to\infty$,
The constant $c$ is the logarithmic capacity $\cp(K)$.
The well known fact that $c$ is positive will emerge in the course of the proof of Proposition \ref{GZE}.

Note that $D^*$ is simply connected and $G^*(z)/z\not=0$ for all $z\in D^*$.
So we may choose a branch of the logarithm
so that $\log(G^*(z)/z)$ is continuous on $D^*$ with limit $c$ at $0$ and then,
for some constant $C(K)<\infty$, we have
$$
\left|\log\left(\frac{\hat G(z)}{z}\right)-c\right|\le{C(K)}|z|,\q z\in\hat D
$$
and so
$$
\left|\log\left(\frac{G(z)}{z}\right)+c\right|\le\frac{C(K)}{|z|},\q z\in D.
$$
In fact the following stronger estimate holds.

\begin{proposition}\label{GZE}
There is an absolute constant $C<\infty$ such that
$$
\left|\log\left(\frac{G(z)}{z}\right)+\cp(K)\right|\le\frac{C\cp(K)}{|z-1|},\q |z-1|>2\d,\q z\in D.
$$
\end{proposition}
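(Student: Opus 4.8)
The plan is to pass to the reflected map $G^*:D^*\to(\C\cup\{\infty\})\sm J$ from the previous paragraph and to work throughout with $h(z)=\log(G^*(z)/z)$, taking the branch that is continuous on $D^*$ with $h(0)=c$. First I would record the structural facts. Since $P\sse\{|z-1|\le\d\}$ and $\d\le1/3$ one has $\psi(P)\sse\{|z-1|\le 3\d/2\}$, hence $P^*\sse\{|z-1|\le 3\d/2\}$; in particular the circle $\{|z-1|=2\d\}$ and everything outside it, $\infty$ included, lie in $D^*$. The relation $G^*\circ\psi=\psi\circ G^*$ gives $h(1/\bar z)=-\overline{h(z)}$, so $h(\infty)=-c$ and $\re h=0$ on $S\sm I$; because $P=\{\bar z:z\in P\}$, uniqueness forces $G^*$ to commute with conjugation as well, so $G^*$ preserves the real axis where defined, and a monotonicity argument on $\hat D\cap\R$ yields $G^*(-1)=-1$ and hence $h(-1)=0$; and on $\hat D$ the boundary values of $\re h$ are $0$ on $S$ and $-\log|z|\in[0,3\d]$ on the arc contributed by $\hat P$, so the maximum principle gives $0\le\re h\le 3\d$ on $\hat D$ and, by reflection, $-3\d\le\re h\le 0$ on the exterior part, whence $|\re h|\le 3\d$ on all of $D^*$. (In passing $c=\re h(0)\ge 0$, and $c=0$ would force $h\equiv0$ and $P^*=\es$; this gives the positivity $c>0$ promised earlier.) I would also use the elementary lower bound $c\ge\d^2/6$ valid under (\ref{D13}); alternatively it follows from monotonicity of logarithmic capacity on comparing $K$ with $K_0$ joined to a continuum in $P$ from $1$ to $1+\d$.

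Next I would introduce the M\"obius correction $m(z)=c(1+z)/(1-z)$ and set $g=h-m$. Then $g$ is holomorphic on $D^*$ with $g(0)=g(-1)=g(\infty)=0$ and $\re g=0$ on $S\sm I$ (since $\re m=0$ on $S$), and, since $m(z)+c=2c/(1-z)$, we have the exact identity $\log(G(z)/z)+c=g(z)+2c/(1-z)$ for $z\in D$ with $|z-1|>2\d$ (such $z$ lie in $D^*$ and there $G=G^*$). So it suffices to prove $|g(z)|\le Cc/|z-1|$ on $\{|z-1|>2\d\}$. Because $g$ is holomorphic on $\{|z-1|\ge 2\d\}\cup\{\infty\}$ and vanishes at $\infty$, applying the Schwarz lemma to $\z\mapsto g(1+1/\z)$ on the disc $\{|\z|<1/(2\d)\}$ gives $|g(z)|\le(2\d/|z-1|)\,M$, where $M=\sup_{|z-1|=2\d}|g|$. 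Thus the entire estimate collapses to the single bound $M\le Cc/\d$.

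For $M$ it is enough to bound $|h|$ and $|m|$ on the circle $\{|z-1|=2\d\}$. There $|m(z)|\le 2c/\d$ by a one-line computation. Writing $h=\re h+i\,\im h$, the real part is already controlled: $|\re h|\le 3\d\le 18c/\d$. For the imaginary part $\im h(z)=\arg G^*(z)-\arg z$ I would note that $|\arg z|\le 6\d$ on the circle, and that the two real points $1\pm2\d$ of the circle are carried by $G^*$ into the positive reals (a routine consequence of the conjugation symmetry together with $J$ being an arc of $S$ symmetric about $1$), so $\im h$ vanishes at those two points. For any other $z$ on the circle, travel along an arc of the circle of length $\le\pi\d$ to the nearer of these two (antipodal) anchor points; along it $|d(\im h)|\le|\nabla\re h|\,ds$, and since $|\re h|\le 3\d$ on the disc of radius $\d/4$ about each point of the circle --- a disc contained in $D^*$ on which $\re h$ is harmonic --- the standard gradient estimate bounds $|\nabla\re h|$ by an absolute constant there. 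Hence $|\im h(z)|\le C\d\le Cc/\d$, so $|h|\le Cc/\d$ on the circle, $M\le Cc/\d$, and unwinding the two reductions yields the Proposition with an absolute constant.

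\textbf{The main obstacle} is the circle estimate $M\le Cc/\d$, i.e.\ controlling $\log(G^*(z)/z)$ just outside the particle. The bound $|\re h|\le 3\d$ is essentially sharp, since $\re h$ really does reach size of order $\d$ next to the particle, so the lower bound $c\gtrsim\d^2$ is genuinely needed in order to recast it as $Cc/\d$; this is the one place where an a~priori estimate on the capacity is used. The argument of $G^*$ is the more delicate quantity, and the idea that makes it work is to exploit the two real points fixed by the conjugation symmetry as anchors for $\im h$ and then close the loop with a harmonic gradient estimate rather than with a long Borel--Carath\'eodory propagation (which would lose a $\log(1/\d)$). The reflection and conjugation book-keeping ($G^*(-1)=-1$, the images of the real components of $D^*$, the boundary values of $\re h$ on the $\hat P$-arc, the symmetry of $J$) is routine but has to be carried out with some care.
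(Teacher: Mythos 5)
Your argument is correct, but it takes a genuinely different route from the paper's. The paper bounds the real part $u=\re\log(G^*(z)/z)$ on all of $\{|z-1|\ge2\d\}$ in one step, by enclosing $P^*$ in an explicit lens-shaped region $P_1^*$ and representing $u\circ F_1$ by the Poisson kernel on $D_0$; the crucial point there is the exact identity $c=-\frac1{2\pi}\int_{|\th|\le\th_0}u(F_1(e^{i\th}))\,d\th$, which turns the representation into $|u(z)+c|\le 2c/\dist(G_1(z),G_1(A))$ using only the sign of $u$, after which $v=\im\log(G^*(z)/z)$ is recovered by integrating the gradient bound (\ref{NUV}) in from $\infty$. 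You instead subtract the M\"obius term $c(1+z)/(1-z)$, note that the remainder $g$ vanishes at $\infty$, and use the Schwarz lemma in the coordinate $1/(z-1)$ to reduce the whole Proposition to the single circle estimate $\sup_{|z-1|=2\d}|g|\le Cc/\d$, which you then obtain elementarily: $|\re h|\le\log(1+\d)\le\d$ on all of $D^*$ by optional stopping and reflection, and $|\im h|\le C\d$ on the circle by anchoring at the two real points $1\pm2\d$, where the conjugation symmetry and a branch-tracking argument along the real axis force $\im h=0$, and then integrating $|\nabla\re h|\le C$ along an arc of length at most $\pi\d$. I checked the branch bookkeeping, the containment $P^*\sse\{|z-1|\le\d\}$ needed for the Schwarz lemma and for the gradient estimate on discs of radius $\d/4$, and the positivity of $G^*(1\pm2\d)$; all are sound. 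The main trade-off: your conversion of the $O(\d)$ circle bound into $O(c/\d)$ uses the lower bound $c\ge\d^2/6$ twice. This is not circular, since that bound is Corollary \ref{GZEE}, whose proof uses only the comparison maps and not the Proposition, but it does make your argument lean on the hypothesis $1+\d\in P$ from (\ref{D13}), which the paper's proof of this Proposition never needs (and your parenthetical justification via bare monotonicity of capacity is too quick, since $K$ need not contain the radial slit; cite Corollary \ref{GZEE} or the extremal property of the radial slit instead). In exchange you avoid both the Poisson-kernel computation and the distortion estimate for $G_1$. One downstream remark: the paper extracts (\ref{NUV}) en route and uses it at (\ref{TGE}) on $|e^z-1|\ge2\d$; from your modulus bound the derivative estimate follows by Cauchy's inequality only on a slightly smaller region such as $|z-1|\ge4\d$, which is harmless but would require adjusting constants in Section \ref{FLA}.
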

\begin{proof}
Set $H(z)=u(z)+iv(z)=\log(G^*(z)/z)$.
Then $H$ is bounded and holomorphic on $D^*$ and $H(z)\to-c$ as $|z|\to\infty$.
Fix $z\in\C$ and let $B$ be a complex Brownian motion starting from $z$.
Suppose that $z\in D$ and consider the stopping time
$$
T=\inf\{t\ge0:B_t\not\in D\}.
$$
Then $T<\infty$ and $|B_T|\ge1$ almost surely, and $|B_T|>1$ with positive probability.
Also $u(B_t)\to-\log|B_T|$ as $t\ua T$ almost surely.
Hence, by optional stopping,
$$
u(z)=-\E(\log|B_T|)<0.
$$
Set $r=\d/(2-\d)$ and define \bb $P_1^*=\{z\in\C:|z-1|\le r|z+1|\}$. Then set
$$
D_1^*=(\C\cup\{\infty\})\sm P_1^*,\q P_1=P_1^*\cap D_0,\q D_1=D_1^*\cap D_0, \q K_1=K_0 \cup P_1.
$$\eb
Then $P^*\sse P_1^*\sse \{z\in\C:|z-1|\le\d/(1-\d)\}$.
The boundary of $D_1$ consists of two circular arcs, one contained in $S$, where $u=0$, the other contained in $P_1$, which we denote by $A$.
The  normalized conformal map $G_1:D_1\to D_0$ can be obtained as $\phi^{-1}\circ g_1\circ\phi$, where $\phi$ takes
$D_0$ to the upper half-plane by $\phi(z)=i(z-1)/(z+1)$ and $g_1(z)=(z+r^2/z)/(1-r^2)$.
Hence we obtain $G_1^*(z)=z(\g z-1)/(z-\g)$ for $z\in D_1^*$, where $\g=(1-r^2)/(1+r^2)$, and
$G_1(A)=\{e^{i\th}:|\th|<\th_0\}$, where $\th_0=\cos^{-1}\g$.
Set $F_1=G_1^{-1}$. Then $u\circ F_1$ is bounded and harmonic on $D_0$.
Suppose now that $z\in D_0$ and consider the stopping time
$$
T_0=\inf\{t\ge0:B_t\not\in D_0\}.
$$
Then $T_0<\infty$ almost surely and, by optional stopping,
$$
u(F_1(z))=\E(u(F_1(B_{T_0})))=\frac1{2\pi}\int_{|\th|\le\th_0}u(F_1(e^{i\th}))\re\left(\frac{z+e^{i\th}}{z-e^{i\th}}\right)d\th.
$$
On letting $|z|\to\infty$ we obtain
$$
c=-\frac1{2\pi}\int_{|\th|\le\th_0}u(F_1(e^{i\th}))d\th>0
$$
so
$$
u(F_1(z))+c=\frac1{2\pi}\int_{|\th|\le\th_0}u(F_1(e^{i\th}))\re\left(\frac{2e^{i\th}}{z-e^{i\th}}\right)d\th.
$$
Hence, for $z\in D_1$,
$$
|u(z)+c|\le\frac{2c}{\dist(G_1(z),G_1(A))}.
$$
By an elementary calculation, we have $|(G_1^*)'(z)-\g|\le6\g/7$ whenever $|z-1|\ge7\d/4$ and $\d\in(0,1/3]$.
Set $A'=\{z\in\C:|z-1|=7\d/4\}$. Then $\dist(G_1(z),G_1(A))\ge\dist(G_1(z),G_1(A'))$ whenever $|z-1|\ge7\d/4$.
By the mean value theorem, there is an absolute constant \bb $C_1<\infty$ \eb such that
$$
\dist(G_1(z),G_1(A'))\ge |z-1|/C_1,\q |z-1|\ge2\d.
$$
Hence
$$
|u(z)+c|\le 2C_1c/|z-1|,\q |z-1|\ge2\d,\q z\in D
$$
and the same estimate extends to $D^*$ by reflection.

Then, by a standard estimate for harmonic functions \bb (differentiate the Poisson kernel)\eb,
\begin{equation}\label{NUV}
\left|\nabla v(z)\right|=\left|\nabla u(z)\right|\le8C_1c/|z-1|^2,\q |z-1|\ge2\d,\q z\in D
\end{equation}
and so
\bb
$$
|v(z)|\le \int_0^\infty |\nabla v(z+s(z-1)||z-1|ds \le 8C_1c/|z-1|,\q |z-1|\ge2\d,\q z\in D,
$$
giving the required bound.
\eb
\end{proof}

\bg
\begin{corollary}\label{GZEE}
We have $\d^2 /6 \le \cp(K)\le 3\d^2/4$.
\end{corollary}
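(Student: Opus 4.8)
\emph{Upper bound.} I would get this from monotonicity of logarithmic capacity together with the explicit comparison cluster already used in the proof of Proposition \ref{GZE}. There it is shown that $P^*\sse P_1^*$, and since $P\sse P^*$ and $P\sse D_0$ this gives $P\sse P_1^*\cap D_0=P_1$, hence $K=K_0\cup P\sse K_0\cup P_1=K_1$ and $\cp(K)\le\cp(K_1)$. The formula $G_1^*(z)=z(\g z-1)/(z-\g)$ shows $G_1^*(z)/z\to\g$ as $|z|\to\infty$, so $e^{-\cp(K_1)}=\g=(1-r^2)/(1+r^2)$ with $r=\d/(2-\d)$, i.e.
$$
\cp(K_1)=\log\frac{1+r^2}{1-r^2}=\log\Bigl(1+\frac{2r^2}{1-r^2}\Bigr)\le\frac{2r^2}{1-r^2}.
$$
For $\d\in(0,1/3]$ one has $r=\d/(2-\d)\le 3\d/5$, hence $r^2\le 9\d^2/25\le 1/25$, and the right-hand side is at most $\tfrac{2(9\d^2/25)}{24/25}=\tfrac34\d^2$.

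\emph{Lower bound.} Here I would use a symmetrization argument. Since $K$ is compact we may write $K=K_0\cup\widebar P$ with $\widebar P$ the closure of $P$; by the standing hypotheses ($P$ connected with $1$ a limit point) together with (\ref{D13}), $\widebar P$ is a continuum with $1\in\widebar P$, $1+\d\in\widebar P$ and $\widebar P\sse\{w:1\le|w|\le1+\d\}$. Let $E^*$ be the circular symmetrization of $E=K_0\cup\widebar P$ about the positive real axis. The unit disc is invariant under this symmetrization, so $E^*=K_0\cup Q$, where $Q$ is obtained by replacing, for each $\rho\in(1,1+\d]$, the non-empty set $\widebar P\cap\{|w|=\rho\}$ by the arc of the same angular measure (a single point if that measure is $0$) centred at $\rho$; since $|\cdot|$ takes every value of $[1,1+\d]$ on the connected set $\widebar P$, each such arc contains its centre $\rho$, so $Q\supseteq(1,1+\d]$ and hence $E^*\supseteq K_0\cup[1,1+\d]$. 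Using that circular symmetrization does not increase logarithmic capacity, together with monotonicity of $\cp$,
$$
\cp(K)=\cp(E)\ge\cp(E^*)\ge\cp\bigl(K_0\cup[1,1+\d]\bigr).
$$
Now $K_0\cup[1,1+\d]$ is the cluster of the slit model, and the explicit conformal map recorded in the footnote following Theorem \ref{BALL} gives $\cp(K_0\cup[1,1+\d])=-\log(1-t)=\log\bigl(1+\tfrac{\d^2}{4(1+\d)}\bigr)$, where $t=\d^2/(2+\d)^2$. Writing $x=\d^2/(4(1+\d))$ and using $\log(1+x)\ge x-x^2/2=x(1-x/2)$ with $x\le 1/36$ and $1+\d\le 4/3$, we obtain $\cp(K)\ge\tfrac{71}{72}\cdot\tfrac{3}{16}\d^2=\tfrac{213}{1152}\d^2\ge\tfrac16\d^2$.

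\emph{Main obstacle.} The step I expect to need the most care is the symmetrization in the lower bound: one must check that circular symmetrization about the positive real axis leaves $K_0$ unchanged and carries the remaining ``cap'' of $E$ onto a set containing the radial segment $[1,1+\d]$ (this is where connectedness of $\widebar P$ and the fact that $\widebar P$ meets the unit circle enter, and one must be careful at radii $\rho$ where $\widebar P\cap\{|w|=\rho\}$ has zero angular measure, using the convention that such a slice symmetrizes to the single point $\rho$), and that the capacity symmetrization inequality applies to the possibly irregular compact set $E$. By contrast the two $\log$-estimates are routine, but they are tight — the constants $1/6$ and $3/4$ are essentially attained as $\d\uparrow 1/3$ — so they must be carried out with a little attention to the numerical bounds on $r$ and $x$.
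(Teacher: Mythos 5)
Your argument is correct, and the two halves sit differently relative to the paper. The upper bound is essentially the paper's own: both enclose $K$ in the lens-shaped hull $K_1$ and read off $\cp(K_1)=\log\bigl((1+r^2)/(1-r^2)\bigr)$ from the explicit map $G_1$; the paper just phrases monotonicity through the additivity identity $\cp(K_1)=\cp(K)+\cp(G(K_1\sm K))$ rather than citing it outright, and your numerics ($r\le 3\d/5$, $r^2\le 1/25$) land on the same constant $3/4$. The lower bound is where you genuinely diverge. The paper factors $G=G^\ddag\circ G_2$ through the explicit slit map $G_2$ and uses $\cp(K)=\cp(K_2)+\cp(G_2(K\sm K_2))\ge\cp(K_2)$ with $K_2=K_0\cup(1,1+\d]$ --- a two-line argument, but one that tacitly needs $D\sse D_2$, i.e.\ the radial slit to lie inside the particle, which is not literally forced by (\ref{D13}) alone. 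Your route via circular symmetrization needs only that $\widebar P$ is a continuum meeting the unit circle and containing $1+\d$ (the intermediate value argument giving a nonempty slice at every radius is exactly right, and the direction of the inequality --- symmetrization does not increase logarithmic capacity, so the radial segment minimizes capacity among continua joining the two circles --- is the classical one). The price is importing a nontrivial external theorem (P\'olya/Hayman/Dubinin) that the paper never invokes; the gain is that the argument covers every particle allowed by (\ref{D13}) without any containment of the slit. Both approaches then evaluate $\cp(K_2)=-\log(1-t)$, $t=\d^2/(2+\d)^2$, identically, and your bound $\log(1+x)\ge x(1-x/2)$ with $x=\d^2/(4(1+\d))\le 1/36$ does clear $\d^2/6$ as claimed.
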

\eg
\begin{proof}
\bg
We use notation from the preceding proof.
\eg
By uniqueness, we have $G_1=G^\dagger\circ G$, where $G^\dagger$ is the normalized
conformal map $G(D_1)\to D_0$. Hence
\bb
$$
\cp(K) \le \cp(K)+\cp(G(K_1\sm K))=\cp(K_1)=\log\left(\frac{1+r^2}{1-r^2}\right) \le \frac{3\d^2}4.
$$
Also, since $1+\d \in P$, $G=G^\ddag \circ G_2$, where $G_2$ is the normalized slit map
$D_2 = D_0 \sm (1, 1+\d] \to D_0$ referred to in Section \ref{illus} and $G^\ddag$ is the normalized conformal map $G_2(D) \to D_0$. Let $K_2=K_0 \cup (1, 1+\d]$. Then
$$
\cp(K) = \cp(K_2) + \cp(G_2(K \sm K_2)) \geq - \log \left ( 1 - \frac{\d^2}{(2+\d)^2}\right ) \geq \frac{\d^2}6.
$$
\eb
\end{proof}

We shall do most of the analysis in logarithmic coordinates.
Set $\tilde D=\{z\in\C:e^z\in D\}$ and $\tilde D_0=\{z\in\C:\re(z)>0\}$.
There are unique conformal maps $\tilde G:\tilde D\to\tilde D_0$ and $\tilde F:\tilde D_0\to\tilde D$
such that $\tilde G(z)-z+c\to0$ and $\tilde F(z)-z-c\to0$ as $\re(z)\to\infty$.
Then $\tilde F$ and $\tilde G$ are $2\pi i$-periodic and $\tilde F=\tilde G^{-1}$.
Also $G\circ\exp=\exp\circ\,\tilde G$ and $F\circ\exp=\exp\circ\tilde F$.
Proposition \ref{GZE} and (\ref{NUV}) provide the following estimates for $\tilde G(z)$
\begin{equation}\label{TGE}
|\tilde G(z)-z+c|\le\frac{Cc}{|e^z-1|},\q |\tilde G'(z)-1|\le\frac{Cc|e^z|}{|e^z-1|^2},\q |e^z-1|\ge2\d,\q z\in\tilde D.
\end{equation}

We introduce some further functions associated to $\tilde G$ and $\tilde F$.
\bb Recall the definitions of $I$ and $J$ from the start of this section. Since $P$ is symmetric, \eb we can write $I=\{e^{i\th}:|\th|\le p\}$ and $J=\{e^{i\th}:|\th|\le q\}$ for some $p\in[0,\pi)$ and $q\in(0,\pi)$.
Then there exist unique non-decreasing right-continuous functions $g^+$ and $f^+$ on $\R$ such that the functions $\th\mapsto g^+(\th)-\th$
and $\th\mapsto f^+(\th)-\th$ are $2\pi$-periodic and such that
\begin{equation}\label{DGF}
g^+(\th)=\begin{cases}\pm q,& \pm\th\in(0,p]\\\im(\tilde G(i\th)),& |\th|\in(p,\pi]\end{cases},\q
f^+(\th)=\begin{cases}0,& |\th|\in[0,q)\\\im(\tilde F(i\th)),& |\th|\in(q,\pi]\end{cases}.
\end{equation}
Here we have used the continuous extensions of $\tilde G$ and $\tilde F$ to certain intervals of the imaginary axis.
Define, for $\th\in\R$
$$
g_0(\th)=g^+(\th)-\th
$$
and, for $x\in(0,1]$ such that $x+i\th\in\tilde D$, define
$$
g_x(\th)=\im(\tilde G(x+i\th))-\th.
$$

\begin{proposition}\label{TGES}
There is an absolute constant $C<\infty$ such that, for $\a=C\d$ and $|\th|\le\pi$,
$$
|g_0(\th)|\le\frac{\a^2}{|\th|\vee\a}
$$
and the same estimate holds for $|g_x(\th)|$ when $x\in(0,1]$ and $x+i\th\in\tilde D$.
Moreover $C$ may be chosen so that
$$
\d^3/C\le\frac1{2\pi}\int_0^{2\pi}g_0(\th)^2d\th\le C\d^3,\q
\frac1{2\pi}\int_0^{2\pi}|g_0(\th)g_0(\th+a)|d\th\le \frac{C\d^4}a\log\left(\frac1\d\right)
$$
whenever $a\in[\d,\pi]$.
\end{proposition}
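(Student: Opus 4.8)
The plan is to establish the three assertions in turn, with the pointwise bounds on $g_0$ and $g_x$ doing the bulk of the work. Everything rests on the estimate (\ref{TGE}) together with one geometric fact: the arcs $I$ and $J$ have angular half-widths satisfying $p\le C\d$ and $\d/C\le q\le C\d$. For the upper bounds, recall $P^*\subseteq\{|z-1|\le\d/(1-\d)\}$, so $\cp(P^*)\le\d/(1-\d)\le 3\d/2$; since $G^*$ is normalized at infinity with leading coefficient $e^{-c}$ and maps $\C\setminus P^*$ conformally onto $\C\setminus J$, we get $\cp(J)=e^{-c}\cp(P^*)\le\cp(P^*)$, and likewise $\cp(I)\le\cp(P^*)$; using $\operatorname{diam}(E)\le 4\cp(E)$ for compact $E$ and that $I,J$ are circular arcs, this forces $p,q\le C\d$. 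For the lower bound on $q$, since $1$ and $1+\d$ lie in $\bar P$ we have $\operatorname{diam}(P)\ge\d$, hence $\cp(J)=e^{-c}\cp(P^*)\ge e^{-c}\cp(P)\ge e^{-c}\d/4\ge\d/5$ by Corollary \ref{GZEE}, while $J$ lies in a Euclidean disc of radius $\sin q$; thus $\sin q\ge\d/5$ and so $q\ge\d/5$.

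For the pointwise bounds I would separate the range where $|\th|$ is of order $\d$ or larger from the range where $|\th|$ is small. When $|\th|\ge C_0\d$ for a suitably large absolute $C_0$ and $x+i\th\in\tilde D$, the inequalities $|e^{x+i\th}-1|\ge 2\sin(|\th|/2)\ge 2|\th|/\pi$ (valid for $x\ge0$, $|\th|\le\pi$) and $c\le 3\d^2/4$ turn (\ref{TGE}) into $|g_x(\th)|\le|\tilde G(x+i\th)-(x+i\th)+c|+c\le C'\d^2/|\th|$; with $\a=C\d$ and $C$ chosen large this is the asserted bound for $|\th|>\a$. In the remaining range $|\th|\le\a$ the claim reads $|g_x(\th)|\le\a$. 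For $g_0$ this follows from the monotonicity of $g^+$, the values $g^+(\pm C_0\d)=\pm C_0\d+O(\d)$ just obtained, and $|g^+(0^+)|=q=O(\d)$. For $g_x$ with $x\in(0,1]$ I would instead argue globally: $v(w):=\im(\tilde G(w)-w)$ is bounded and harmonic on $\tilde D$, tends to $0$ as $\re(w)\to\infty$, and on $\partial\tilde D$ satisfies $|v|\le\a$ — on the imaginary-axis portion $\{i\th:p<|\th|\le\pi\}$ because $v(i\th)=g_0(\th)$, already bounded by $\a$; and on the remaining portion of $\partial\tilde D$, which lies near $0$ and is the image of $\partial P$ in logarithmic coordinates, because $\tilde G$ carries it into the segment $\{i\phi:|\phi|\le q\}$, so that $|v|\le q+\arcsin\d=O(\d)$ there. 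The maximum principle then gives $|v|\le\a$ on all of $\tilde D$, that is $|g_x(\th)|\le\a$.

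The two integral estimates then follow from $|g_0(\th)|\le\a^2/(\|\th\|\vee\a)$, where $\|\th\|$ is the distance from $\th$ to $2\pi\Z$ and we use that $g_0$ is $2\pi$-periodic. Squaring and integrating, $\frac1{2\pi}\int_0^{2\pi}g_0^2\le\frac1{2\pi}\int_{-\pi}^{\pi}\big(\a^2/(|\th|\vee\a)\big)^2 d\th\le C\a^3\le C\d^3$. For the matching lower bound, for $\th\in(0,q/2]$ the monotonicity of $g^+$ gives $g^+(\th)\ge q$, hence $g_0(\th)=g^+(\th)-\th\ge q/2$, so $\frac1{2\pi}\int_0^{2\pi}g_0^2\ge\frac1{2\pi}\int_0^{q/2}(q/2)^2 d\th=q^3/(16\pi)\ge\d^3/C$ (using $q\ge\d/5$). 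For the correlation estimate, dominate $\frac1{2\pi}\int_0^{2\pi}|g_0(\th)g_0(\th+a)|d\th$ by $\frac{\a^4}{2\pi}\int_{-\pi}^{\pi}\frac{d\th}{(\|\th\|\vee\a)(\|\th+a\|\vee\a)}$ and split the circle into the sets $\{\|\th\|\le a/2\}$, $\{\|\th+a\|\le a/2\}$, and the set on which both exceed $a/2$; on each of these one of the two factors is at most $2/a$, while the integral of the remaining factor (over the whole circle, or over $\{\|s\|>a/2\}$, as appropriate) is $O(\log(1/\d))$; this gives the bound $C\d^4a^{-1}\log(1/\d)$ for $a\in[\d,\pi]$.

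The step I expect to be the main obstacle is the boundary-correspondence input used for the $g_x$ bound and, implicitly, for the lower bound on $\int g_0^2$: that the portion of $\partial\tilde D$ near $0$ maps under $\tilde G$ into the short segment $\{i\phi:|\phi|\le q\}$ rather than onto the long complementary arc. I would justify this from the facts that $\tilde G$ induces a homeomorphism of prime-end boundaries, that the imaginary-axis portion $\{i\th:p<|\th|\le\pi\}$ of $\partial\tilde D$ is carried onto $\{i\phi:q<|\phi|\le\pi\}$ — a reflection-principle statement for $G^*$ on $S\setminus I$ using $G^*\circ\psi=\psi\circ G^*$ — with endpoints $\pm ip$ going to $\pm iq$, so that the complementary arc of $\partial\tilde D$ is forced onto the complementary segment. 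A secondary nuisance, not a genuine difficulty, is organising the absolute constants so that a single $C$ serves throughout the statement; the range of $\d$ bounded away from $0$ is covered by the crude bound $|g_0|<2\pi$.
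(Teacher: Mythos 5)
Your proposal is correct and, for most of the statement, follows the same route as the paper: the pointwise bound comes from (\ref{TGE}) for $|e^{x+i\th}-1|\ge2\d$ together with $\cp(K)\le 3\d^2/4$, with the remaining range $|\th|\lesssim\d$ handled by monotonicity of $g^+$ (for $g_0$) and the maximum principle (for $g_x$), and the two upper integral bounds then follow by integrating the pointwise bound exactly as you do. The one genuinely different ingredient is your lower bound on $q$ (hence on $\int g_0^2$). The paper identifies $q=\pi\PP_\infty(B_T\in P)$ and bounds this harmonic measure from below by a reflection argument (the estimate (\ref{EHM}): reflect $P$ in the ray through a point $z\in P$ with $|z-1|=\d$, and bound the hitting probability of $P\cup P^{(2)}$ below by that of an arc or a radial segment). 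You instead compare transfinite diameters: $J$ has conformal radius $e^{-c}$ times that of $P^*$, which is at least $\operatorname{diam}(P)/4\ge\d/4$, while $J$ sits in a disc of radius $\sin q$, forcing $q\ge\d/5$. Both arguments are sound; note only that the paper's version is deliberately framed so as to use neither the symmetry of $P$ nor $1+\d\in P$ (only that some point of $P$ is at distance $\d$ from $1$), because the same estimate is recycled in the proof of Lemma \ref{NOT}, whereas your capacity argument leans on connectedness and the diameter lower bound in essentially the same generality, so nothing is lost. Your explicit treatment of the boundary correspondence (that the image of $\partial P$ under $\tilde G$ lands on the short segment $\{i\phi:|\phi|\le q\}$) and of the split of the correlation integral into three regions simply fills in steps the paper leaves implicit.
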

\begin{proof}
The first estimate follows from the first estimate in (\ref{TGE}), using the \bb non-decreasing property of $g^+$ \eb\bg and the maximum principle \eg
to deal with the case where $|e^{x+i\th}-1|<2\d$, and using $\cp(K)\le 3\d^2/4$. This leads directly to the
upper bound in the second estimate and the third estimate.

For the lower bound, note that
$$
\frac1{2\pi}\int_0^{2\pi}g_0(\th)^2d\th\ge\frac1\pi\int_0^q(q-\th)^2d\th=\frac{q^3}{3\pi}
$$
and $q=\pi\PP_\infty(B_T\in P)$.
\bg We give an argument which uses neither the symmetry assumption $P=\{\bar z:z\in P\}$ nor the assumption
$1+\d\in P$ and instead assumes only that $|z-1|=\d$ for some $z\in P$. This will be useful in Lemma \ref{NOT}. \eg
Denote by $P^{(2)}$ the union of $P$ with its reflection in the line $\ell$ joining $z$ and $0$.
Denote by $w$ the image of $1$ under this reflection and by $A$ the shorter arc in the unit circle joining $w$ and $1$.
Then, since $P$ is connected, we have
\begin{align}\notag
2\PP_\infty(B_T\in P)&\ge\PP_\infty(B\text{ hits $P^{(2)}$ before $K_0$})\\\notag
&\ge\PP_\infty(B\text{ hits $K_0$ in $A$})\vee\PP_\infty(B\text{ hits $\ell$ before $K_0$})\\\label{EHM}
&\ge(|w-1|\vee(|z|-1))/(2\pi)\ge\d/(4\pi)
\end{align}
which gives the claimed lower bound.
\end{proof}

\def\comm{
Following does not seem needed here, but probably will be in a later section.
Our analysis rests on the following estimate for the conformal map $G:D\to D_0$.
The argument is of a standard type but we have taken care obtain
an explicit dependence of the right hand side on $|z|$ and $\d$ which we
shall find useful.
\def\j{
\footnote{The following stronger stronger estimate also holds
$$
\left|\log\left(\frac{G(z)}{z}\right)+\cp(K)\frac{z+1}{z-1}\right|\le\frac{C\d^3}{|z|},\q |z-1|\ge2\d .
$$
Lawler \cite{L}[Proposition 3.55], modulo inversion in the unit circle, has $C\d^3|z|^2/|z-1|^2$ on the right,
with the condition $|1-1/z|\ge C\d$,
which is insufficiently informative for us for large $|z|$. Must check!}
}
\begin{proposition}\label{GEST}
Assume that $\d\le1/16$. Then $\cp(K)\le16\d^2$ and there is a universal constant $C<\infty$ such that
$$
\left|\log\left(\frac{G(z)}{z}\right)+\cp(K)\frac{z+1}{z-1}\right|\le\frac{C\d^3|z|}{(|z|-1)^2},\q |z|>1+C\d.
$$
\end{proposition}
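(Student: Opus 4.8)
The plan is to refine the comparison argument from the proof of Proposition~\ref{GZE}. The bound $\cp(K)\le16\d^2$ is weaker than Corollary~\ref{GZEE}, so write $c=\cp(K)\le3\d^2/4$, put $\varpi(z)=(z+1)/(z-1)$, and set
$$
\Psi(z)=\log\left(\frac{G^*(z)}{z}\right)+c\,\varpi(z)=H(z)+c\,\varpi(z).
$$
Since $1\in P^*$, both $H$ and $\varpi$ are holomorphic on $D^*$, so $\Psi$ is holomorphic on $D^*$, and $\Psi(z)\to0$ as $|z|\to\infty$ (because $H(z)\to-c$ and $\varpi(z)\to1$). It suffices to bound $|\Psi(z)|$ by $C\d^3|z|/(|z|-1)^2$ for $|z|>1+C\d$ with $C$ a large absolute constant; all such $z$ lie in $D_1\subseteq D^*$, so I would set $\zeta=G_1(z)\in D_0$ and work there.

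I would apply the Schwarz (Herglotz) representation on $D_0$ to both $H\circ F_1$ and $\varpi\circ F_1$. From the proof of Proposition~\ref{GZE}, $u\circ F_1$ is bounded and harmonic on $D_0$, with $u(F_1(e^{i\th}))\le0$, vanishing for $|\th|>\th_0$, and $c=-\tfrac1{2\pi}\int_{|\th|\le\th_0}u(F_1(e^{i\th}))\,d\th$. Likewise $\varpi\circ F_1$ is bounded and holomorphic on $D_0$; its boundary density $\rho(\th):=\re\varpi(F_1(e^{i\th}))$ is nonnegative, vanishes for $|\th|>\th_0$ (there $F_1(e^{i\th})\in S$, so $\re\varpi=0$), and, evaluating the representation at $\zeta=\infty$, $\tfrac1{2\pi}\int_{|\th|\le\th_0}\rho(\th)\,d\th=\varpi(\infty)=1$. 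Since $\im H(\infty)=\im\varpi(\infty)=0$, adding the two representations gives
$$
\Psi(F_1(\zeta))=\frac1{2\pi}\int_{|\th|\le\th_0}\bigl(u(F_1(e^{i\th}))+c\,\rho(\th)\bigr)\,\frac{\zeta+e^{i\th}}{\zeta-e^{i\th}}\,d\th.
$$
The crucial point is that the combined density has zero mean, $\int_{|\th|\le\th_0}\bigl(u(F_1(e^{i\th}))+c\,\rho(\th)\bigr)\,d\th=-2\pi c+2\pi c=0$, so I may subtract from the kernel its value at $\th=0$, replacing $\frac{\zeta+e^{i\th}}{\zeta-e^{i\th}}$ by $\frac{\zeta+e^{i\th}}{\zeta-e^{i\th}}-\frac{\zeta+1}{\zeta-1}=\frac{2\zeta(e^{i\th}-1)}{(\zeta-e^{i\th})(\zeta-1)}$.

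The remaining estimate is routine. For $|\th|\le\th_0$ and $|\zeta-1|\ge2\th_0$ the new kernel has modulus at most $4\th_0|\zeta|/|\zeta-1|^2$ (using $|e^{i\th}-1|\le\th_0$ and $|\zeta-e^{i\th}|\ge|\zeta-1|-\th_0\ge|\zeta-1|/2$), while, since $u\circ F_1\le0$ and $\rho\ge0$,
$$
\int_{|\th|\le\th_0}\bigl|u(F_1(e^{i\th}))+c\,\rho(\th)\bigr|\,d\th\le\int_{|\th|\le\th_0}\bigl(-u(F_1(e^{i\th}))\bigr)\,d\th+c\int_{|\th|\le\th_0}\rho(\th)\,d\th=4\pi c.
$$
Hence $|\Psi(F_1(\zeta))|\le 8c\,\th_0\,|\zeta|/|\zeta-1|^2$. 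From $\cos\th_0=\g=(1-r^2)/(1+r^2)$ with $r=\d/(2-\d)$ one gets $\sin(\th_0/2)=r/\sqrt{1+r^2}\le r$, so $\th_0\le 4\d$; with $c\le3\d^2/4$ this gives $|\Psi(F_1(\zeta))|\le C\d^3|\zeta|/|\zeta-1|^2$. Finally, elementary bounds for $G_1^*(z)=z(\g z-1)/(z-\g)$ — using $\g=1+O(\d^2)$ and the factorization $G_1^*(z)-1=\g\bigl(z-\tfrac{1+r}{1-r}\bigr)\bigl(z-\tfrac{1-r}{1+r}\bigr)/(z-\g)$, in which $\tfrac{1+r}{1-r}-1$, $1-\tfrac{1-r}{1+r}$ and $1-\g$ are all $O(\d)$ or smaller — give $|\zeta|\le C|z|$ and $|\zeta-1|\ge|z-1|/C$ (and $|\zeta-1|\ge2\th_0$) whenever $|z|>1+C\d$. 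Substituting and using $|z-1|\ge|z|-1$ yields the stated inequality.

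The one genuinely non-routine point is spotting the cancellation: the correction $c\,\varpi(z)$ is exactly what makes the combined boundary density $u\circ F_1+c\,\rho$ have zero mean on the short arc $\{|\th|\le\th_0\}$, which licenses the constant subtraction from the Poisson kernel and produces the extra factor $\th_0$ (of order $\d$). Carrying out the same computation without that subtraction only reproduces the $O(\d^2)$ bound of Proposition~\ref{GZE}. (This is the exterior-of-disc analogue of the standard half-plane Loewner estimate.)
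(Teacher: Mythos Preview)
Your proof is correct, and the core mechanism is the same as the paper's: the subtracted term $c\,(z+1)/(z-1)$ produces a cancellation against the boundary data supported on a short arc of length $O(\d)$, which is what turns the $O(\d^2)$ bound of Proposition~\ref{GZE} into $O(\d^3)$. The packaging, however, differs. The paper works with the real part $\tilde u$ only, writing it as an integral against the kernel difference $p_{D^*}(z,\th)/p_{D^*}(\infty,\th)-(|z|^2-1)/|z-1|^2$, bounds this difference pointwise by estimating $\bigl|\tfrac{z'+w}{z'-w}-\tfrac{z+1}{z-1}\bigr|$ with $z'=G^*(z)$ and $w=G^*(e(\th))$ (both within $O(\d)$ of $z$ and $1$), and then recovers the imaginary part via $|\nabla\tilde v|=|\nabla\tilde u|$ and integration along a ray from $z$ to $\infty$. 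You instead apply the complex Herglotz formula directly to $\Psi\circ F_1$ on $D_0$, exploit the zero mean of the combined density to subtract the kernel value at $\th=0$, and bound $|\Psi|$ in one stroke; you also reuse the comparison domain $D_1$ and map $G_1$ already set up in Proposition~\ref{GZE} rather than introducing a variant with $r=2\d$. Your route is more economical --- it avoids the separate real/imaginary treatment and the ray integration --- while the paper's version makes the geometric source of the extra $\d$ (that $z'\approx z$ and $w\approx1$) slightly more explicit.
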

\begin{proof}
Consider the image domain under inversion $\hat D=\{z\in\C:1/z\in D\}$ and write $\hat G$ for the unique
conformal map taking $\hat D$ to the open unit disc such that $\hat G(0)=0$ and $\hat G'(0)>0$.
Then $G(z)=1/\hat G(1/z)$ and $\cp(K)=\log\hat G'(0)$.
There is a constant $C(K)<\infty$ such that $|\hat G(z)-e^{\cp(K)}z|\le C(K)|z|^2$ for all $z\in\hat D$.
From this it follows that $G(z)/z\to e^{-\cp(K)}$ as $|z|\to\infty$, that we may choose a branch of the logarithm
so that $\log(G(z)/z)$ is continuous on $D$ with limit $-\cp(K)$ at $\infty$, and then that,
for some constant $C(K)<\infty$, we have
$$
\left|\log\left(\frac{G(z)}{z}\right)+\cp(K)\right|\le\frac{C(K)}{|z|},\q z\in D.
$$

Consider the holomorphic functions
$$
h(z)=\log\left(\frac{G(z)}{z}\right),\q
\tilde h(z)=\log\left(\frac{G(z)}{z}\right)+\cp(K)\frac{z+1}{z-1},\q z\in D.
$$
Write $h(z)=u(z)+iv(z)$ and $\tilde h(z)=\tilde u(z)+i\tilde v(z)$.
Note, by the preceding estimate, that $u$ is bounded on $D$, and $u(z)\to-\cp(K)$ and $\tilde v(z)\to0$ as $|z|\to\infty$.
Fix $z\in D$ and let $B$ be a complex Brownian motion starting from $z$.
Consider the stopping time $\t=\inf\{t\ge0:B_t\not\in D\}$.
Then $\t<\infty$ and $u(B_t)\to-\log|B_\t|$ as $t\ua\t$, almost surely.
Hence, by optional stopping, $u(z)=-\E_z(\log|B_\t|)$. In particular $-\d\le-\log(1+\d)\le u(z)\le0$.

Consider the conformal map $\phi(z)=i(z-1)/(z+1)$ taking $D_0$ to the upper half plane $H_0$.
Note that $\phi'(z)=2i/(z+1)^2$ so, if $|z-1|\le\d\le1$, then $|z+1|\ge1$, so $|\phi'(z)|\le2$.
Then, by the mean value theorem, since $\phi(1)=0$, the image of $P$ under $\phi$ is contained in $B_r(0)$,
the closed disc of radius $r=2\d$ about $0$. Set $H^*=H_0\sm B_r(0)$ and $D^*=\phi^{-1}(H^*)$, then $D^*\sse D$.
Denote by $A$ the smaller circular arc in the boundary of $D^*$ and parametrize $A$ by
$e(\th)=\phi^{-1}(re^{i\th}),\th\in[0,\pi]$. Now $u$ is bounded and harmonic, and continuous
on the closure of $D^*$, vanishing on the larger circular arc in the boundary. Hence, for $z\in D^*$,
$$
u(z)=\int_0^\pi u(e(\th))p_{D^*}(z,d\th),
$$
where $p_{D^*}(z,d\th)=p_{D^*}(z,\th)d\th$ is the hitting distribution of $B$ on $\partial D^*$, restricted to $A$.
Let $|z|\to\infty$ to obtain
$$
\cp(K)=-\int_0^\pi u(e(\th))p_{D^*}(\infty,d\th).
$$
It is straightforward to see, for example using the conformal map $G^*$ below, that $p_{D^*}(\infty,A)\le16\d$.
We recover the well known bounds $0\le\cp(K)\le16\d^2$.

Define
$$
\D(z)=\sup_\th\left|\frac{p_{D^*}(z,\th)}{p_{D^*}(\infty,\th)}-\frac{|z|^2-1}{|z-1|^2}\right|
$$
and note that
$$
\tilde u(z)=u(z)+\cp(K)\frac{|z|^2-1}{|z-1|^2}
=\int_0^\pi\left(\frac{p_{D^*}(z,\th)}{p_{D^*}(\infty,\th)}-\frac{|z|^2-1}{|z-1|^2}\right)u(e(\th))p_{D^*}(\infty,d\th).
$$
We shall show that, for some universal constant $C<\infty$, we have
\begin{equation}\label{DELT}
\D(z)\le C\d|z|/(|z|-1)^2, \q\text{whenever }|z|\ge1+C\d.
\end{equation}
On using this estimate in the preceding integral, we deduce that
$$
|\tilde u(z)|\le C\cp(K)\d|z|/(|z|-1)^2\le C\d^3|z|/(|z|-1)^2.
$$
Hence, following a standard argument,
$$
|\nabla\tilde v(z)|=|\nabla\tilde u(z)|\le C\d^3(|z|+1)/(|z|-1)^3,
$$
and then
$$
|\tilde v(z)|\le\int_1^\infty|z||\nabla\tilde v(tz)|dt\le\int_1^\infty
C\d^3|z|\frac{t|z|+1}{(|tz|-1)^3}dt=\frac{C\d^3|z|}{(|z|-1)^2}.
$$

It remains to show the estimate (\ref{DELT}). Consider the conformal map $g^*:H^*\to H_0$ given by
$g^*(z)=z+r^2/z$ and the corresponding map $G^*:D^*\to D_0$ given by
$$
G^*(z)=\phi^{-1}\circ g^*\circ\phi(z)=z\frac{1+a}{1-a},\q a=\frac{z+1}{z-1}r^2.
$$
Then $G^*(e(\pi))=\phi^{-1}(2r)=1+4r/(i-2r)$, so $|G^*(e(\pi))-1|\le 4r=8\d$.
Fix $z\in D^*$, note that $|a|\le r$ and recall that $\d\le1/16$.
Fix $\th\in[0,\pi]$ and set $w=G^*(e(\th))$ and $z'=G^*(z)$.
Then $|w-1|\le8\d$ and $|z-z'|=2|z||a/(1-a)|\le2r|z|/(1-r)\le8\d|z|$.
Also $|z'-w|\ge|z-1|-|z-z'|-|w-1|\ge|z-1|-8\d|z-1|-16\d\ge|z-1|/4$, provided $|z|\ge1+64\d$.

By the invariance of harmonic measure under conformal maps
$$
\frac{p_{D^*}(z,\th)}{p_{D^*}(\infty,\th)}=\frac{p_{D_0}(z',w)}{p_{D_0}(\infty,w)}=\re\left(\frac{z'+w}{z'-w}\right).
$$
So, for $|z|\ge1+64\d$,
$$
\left|\frac{p_{D^*}(z,\th)}{p_{D^*}(\infty,\th)}-\frac{|z|^2-1}{|z-1|^2}\right|\le\left|\frac{z'+w}{z'-w}-\frac{z+1}{z-1}\right|
=2\frac{|(w-1)z+(z-z')|}{|z'-w||z-1|}\le\frac{C\d|z|}{|z-1|^2}.
$$
Since $\th$ was arbitrary, this gives the claimed bound on $\D(z)$.
\end{proof}

\br
\begin{proposition}\label{GDL}
There is a universal constant $C<\infty$ such that $\d^{-3}/C \le \rho(P) \le C\d^{-3}$
and $\l(P)\le C\d^{1/4}$.
\end{proposition}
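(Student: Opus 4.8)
The plan is to read both assertions off Proposition \ref{TGES}: the bound for $\rho(P)$ is essentially a reformulation of it, and the bound for $\l(P)$ follows from its pointwise estimates once these are supplemented by a Beurling-type estimate near the particle. For $\rho(P)$, I would first identify the function $g$ of Section \ref{illus} with $g^{+}$ off the exceptional arc. Since $G\circ\exp=\exp\circ\tilde G$, we have $e^{ig(\th)}=G(e^{i\th})=\exp(\tilde G(i\th))$, so $g(\th)\equiv\im\tilde G(i\th)$ modulo $2\pi$; and since $g$ and $g^{+}$ are continuous with $g(\pi)=g^{+}(\pi)=\pi$ (by the symmetry and $2\pi i$-periodicity of $\tilde G$), they agree for $|\th|\in(p,\pi]$, so that $g(\th)-\th=g_0(\th)$ there. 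On the arc $I=\{e^{i\th}:|\th|\le p\}$ one has $p\le C\d$ and $|g(\th)-\th|\le C\d$ (the harmonic measure from $\infty$ of the connected diameter-$\d$ set $P$, which accumulates at $1\in S$, is $O(\d)$, e.g.\ by a Beurling projection bound), so $I$ contributes only $O(\d^{3})$ to $\frac1{2\pi}\int_0^{2\pi}(g(\th)-\th)^2\,d\th$, and contributes nothing for the slit and disc, where $p=0$. Hence that integral differs from $\frac1{2\pi}\int_0^{2\pi}g_0(\th)^2\,d\th$ by $O(\d^3)$, so by Proposition \ref{TGES} it lies between $\d^3/C$ and $C\d^3$. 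Inverting the defining relation $\rho(P)\,\frac1{2\pi}\int_0^{2\pi}(g(\th)-\th)^2\,d\th=1$ gives $\d^{-3}/C\le\rho(P)\le C\d^{-3}$.

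For the distortion bound $\l(P)\le C\d^{1/4}$, the idea is to control the maps $\tilde G$ and $\tilde F$ around the particle by splitting at the geometric-mean scale $\sqrt\d$. In the outer region $|e^z-1|\ge\sqrt\d$, the estimates of Proposition \ref{TGES} and (\ref{TGE}) for $\tilde G$, together with the analogous estimates for $\tilde F$, show --- using $\cp(K)\le3\d^2/4$ from Corollary \ref{GZEE} --- that $\tilde G(z)-z+\cp(K)$ and $\tilde F(z)-z-\cp(K)$ are $O(\cp(K)/\sqrt\d)=O(\d^{3/2})$, with derivatives within $O(\d)$ of $1$. In the inner region one uses only that $P$ is connected, accumulates at $1\in S$, and reaches distance $\d$ from $1$ (as $1+\d\in P$ by (\ref{D13})): by the Beurling projection theorem the harmonic measure of $P$ seen from the circle $|z-1|=\sqrt\d$ is at most $C(\d/\sqrt\d)^{1/2}=C\d^{1/4}$, which bounds the distortion $P$ can induce at that resolution. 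Matching the two regions across the interface $|e^z-1|=\sqrt\d$ gives $\l(P)\le C\d^{1/4}$; the exponent $1/4$ is precisely the Beurling exponent --- the square root of particle size over distance --- at the scale $\sqrt\d$ where the two estimates balance (at scale $\d$ the outer control already fails, while at a larger scale the outer error would dominate).

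The $\rho(P)$ bound needs nothing beyond Proposition \ref{TGES} and the routine identification of $g(\th)-\th$ with $g_0(\th)$, so the real work is the $\l(P)$ bound: one must formulate the Beurling-type estimate in the inner region in a form suited to the intended application, establish the matching outer estimates for $\tilde F$ as well as $\tilde G$, and patch them cleanly across $|e^z-1|=\sqrt\d$ while tracking the distortion through the interface. Pinning down this interface argument, and with it the exponent $1/4$, is the main obstacle.
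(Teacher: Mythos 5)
Your treatment of $\rho(P)$ is correct and is essentially the paper's own: the two-sided bound $\d^3/C\le\frac1{2\pi}\int_0^{2\pi}g_0(\th)^2d\th\le C\d^3$ of Proposition \ref{TGES} is precisely the statement $\d^{-3}/C\le\rho(P)\le C\d^{-3}$ once $g-\id$ is identified with $g_0$. (One small caution: "differs by $O(\d^3)$" is not by itself enough to transfer the \emph{lower} bound, since the error could be comparable to $\d^3/C$; but the lower-bound argument of Proposition \ref{TGES} --- $\frac1{2\pi}\int g_0^2\ge q^3/(3\pi)$ with $q\ge\d/(8\pi)$ --- applies verbatim to $g$, and for the slit and the disc $p=0$ so there is no discrepancy at all.)

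The second half has a genuine gap: you have misread what $\l$ is. It is not a pointwise distortion bound on $\tilde G$ or $\tilde F$ near the particle, but the localization constant of Section \ref{WC}: the smallest $\l$ such that $\frac\rho{2\pi}\int_0^{2\pi}|g_0(\th+a)g_0(\th)|\,d\th\le\l$ for all $a\in[\ve\l,2\pi-\ve\l]$. Your inner/outer matching across $|e^z-1|=\sqrt\d$ yields true sup-norm estimates on $\tilde G-\id+c$ in the outer region and a Beurling bound on the harmonic measure of $P$ seen from distance $\sqrt\d$, but neither is a bound on this correlation integral, and no step is indicated that would convert them into one; in particular the exponent $1/4$ does not arise as a Beurling exponent at scale $\sqrt\d$. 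The argument that is actually needed is the third estimate of Proposition \ref{TGES}, which you quote for $\rho$ but do not use here: $\frac1{2\pi}\int_0^{2\pi}|g_0(\th)g_0(\th+a)|\,d\th\le C\d^4a^{-1}\log(1/\d)$ for $a\in[\d,\pi]$. The paper obtains this from the pointwise bound $|g_0(\th)|\le\a^2/(|\th|\vee\a)$ with $\a=C\d$, noting that for $a$ well above $\d$ at most one of $\th,\th+a$ can lie in the core $|\th|\le\a$ where $|g_0|$ is of order $\d$ rather than $\d^2/|\th|$. Multiplying by $\rho\le C\d^{-3}$ gives $\frac\rho{2\pi}\int_0^{2\pi}|g_0(\th)g_0(\th+a)|\,d\th\le C\d a^{-1}\log(1/\d)$, which for $a\ge\d^{1/4}$ is at most $C\d^{3/4}\log(1/\d)\le\d^{1/4}$ for $\d$ small; this verifies the defining inequality with $\l=C\d^{1/4}$ (indeed with the smaller value $C\sqrt{\d\log(1/\d)}$ when $\ve=1$). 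So the $\l$ bound, like the $\rho$ bound, falls straight out of Proposition \ref{TGES}; the "interface argument" you identify as the main obstacle is not part of the proof and would not, as sketched, produce the required estimate.
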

\begin{proof}
It is shown in Lawler \cite{L} that there exists some universal constant $c<\infty$ such that if
$c\d\le x\le1-c\d$, then
$$
|\tilde{g}(x)| \le \frac{3 \cp(P \cup K_0)}{2\pi|1 - e^{2 \pi ix}|} = \frac{3\cp(P \cup K_0)}{4\pi \sin(\pi x)},
$$
where $\cp(P \cup K_0) = - \log G'(\infty)$ is the logarithmic capacity of $P \cup K_0$. If $K_1 \sse K_2$,
then $\cp(K_1) \le \cp(K_2)$ (see, for example, \cite{L}),
and hence $\cp(P \cup K_0) \le\cp(L\cup K_0) \le c'\d^2$
for some constant $c' > 0$, by the estimate \eqref{LCE}.

Now suppose $x \in (-c\d, c\d)$. Then $g(-c\d) \le g(x) \le g(c\d)$ and hence, for some $c''>0$,
$$
|\tilde{g}(x)|
\le|\tilde{g}(c\d)|\vee|\tilde{g}(-c\d)|+2c\d
\le\frac{3c'\d^2}{4\pi\sin(\pi c\d)}+2c\d
\le c''\d.
$$
Hence, we have
\begin{align*}
\rho(g)^{-1}&=\int_0^1|\tilde{g}(x)|^2 dx
\le2c\d(c''\d)^2+\int_{c\d}^{1-c\d}\frac{9c'^2\d^4}{16\pi^2\sin^2(\pi x)}dx\\
&=2cc''^2\d^3+\frac{9c'^2\d^4}{8\pi^3}\cot(c\pi\d)=O(\d^3).
\end{align*}

To show the other side of the inequality, recall the estimate \eqref{FR3}, which implies
that $\rho(g)\le8\|\tilde{g}\|^{-3}$. So it is enough to show that there exists
some $x \in [0,1)$ with $|\tilde{g}(x)| \geq c'''\d$ for some $c'''>0$.
Define
$$
\tau_A = \inf \{t>0: B_t \in A \},
$$
where $B_t$ is a Brownian motion starting from infinity. Since Brownian motion is invariant under conformal transformations,
$$
g^+(0)-g^-(0)
=\PP(\t_P\le\t_{K_0}).
$$
Since $P$ has diameter $\d$, there exist two points, $w, z \in \overline{P}$,
with $w \in \overline{P} \cap K_0$ and $|w-z| \geq \d/2$. Since $P$ is connected there is a path,
$l \sse \overline{P}$ joining $w$ and $z$. Let $r$ be the line joining 0 to $z$. Reflect $l$ in $r$ and call the reflection $l'$ and the reflection of point $w$, $w'$. Now
$$
2 \mathbb{P}(\tau_l \le \tau_{K_0})\geq \mathbb{P}(\tau_{l \cup l'} \le \tau_{K_0} )
$$
and
$$
\mathbb{P}(\tau_{l \cup l'} \le \tau_{K_0} ) \geq \mathbb{P}(\tau_r \le \tau_{K_0}) \geq 8c'''(|z|-1),
$$
by the limit \eqref{DTHN}, but also
$$
\mathbb{P}(\tau_{l \cup l'} \le \tau_{K_0}) \geq \mathbb{P}(B_{\tau_{K_0}} \in \text{arc } (w-w')) \geq 8c'''|w-w'|
$$
for some constant $c'''>0$. Either $(|z|-1)$ or $|w-w'| \geq \d/4$ which finishes the argument.

Finally, to show that $\l(g)\le C\d^{1/4}$ observe that for $\d$ sufficiently small,
if $a \in [\d^{1/4}, 1- \d^{1/4}]$, then at most one of $x$ and $x+a$ lie in
$\cup_{n \in \mathbb{Z}}[n-c\d, n+c\d]$. Hence
$$
\rho\int_0^1|\tilde{g}(x+a)\tilde{g}(x)|dx
\le2\rho c''\d\int_{c\d}^{1-c\d}\frac{3c'\d^2}{4\pi\sin(\pi x)}dx
=\frac{3\rho c'c''\d^3}{\pi^2}\log\tan(\pi c\d)
=O(\d).
$$
Hence there exists some $C>0$ such that, for all $a \in [C\d^{1/4}, 1- C\d^{1/4}]$, we have
$$
\rho \int_0^1|\tilde{g}(x+a)\tilde{g}(x)|dx < C\d^{1/4}.
$$
\end{proof}
\er
}

\section{Fluid limit analysis for random conformal maps}\label{FLA}
Define conformal maps $\tilde F_n$ and $\tilde\Phi_n$ on $\tilde D_0$ by
$$
\tilde F_n(z)=\tilde F(z-i\Th_n)+i\Th_n,\q \tilde\Phi_n=\tilde F_1\circ\dots\circ\tilde F_n
$$
where $(\Th_n:n\in\N)$ is the sequence of independent uniformly distributed random variables
specified in the Introduction. Write $\tilde\G_n$ for the inverse map $\tilde\Phi_n^{-1}:\tilde D_n\to\tilde D_0$.
It will be convenient to use the filtration $(\cF_n:n\ge0)$ given by $\cF_n=\s(\Th_1,\dots,\Th_n)$.
Recall that we write $c$ for the logarithmic capacity $\cp(K)$.
Assumption (\ref{D13}) remains in force in this section.

For $\ve\in[2\d,1]$ and $m\in\N$, denote by $\O(m,\ve)$ the event defined by the following conditions:
for all $z\in\tilde D_0$ and all $n\le m$, we have
$$
|\tilde\Phi_n(z)-z-cn|<\ve\q\text{whenever}\q\re(z)\ge5\ve
$$
and
$$
z\in\tilde D_n\q\text{and}\q|\tilde\G_n(z)-z+cn|<\ve\q\text{whenever}\q\re(z)\ge cn+4\ve.
$$
\bg Note the round brackets -- this is not the same event as $\O[m,\ve]$, defined above.
We shall use the following estimate in the case where $m=\lfloor\d^{-6}\rfloor$ and $\ve=\d^{2/3}\log(1/\d)$ when,
using the bound $c\le 3\d^2/4$ from Corollary \ref{GZEE}, it implies that $\O(m,\ve)$ has high probability as $\d\to0$.
The proof is based on a fluid limit approximation for each Markov process $(\tilde\G_n(z):n\ge0)$, optimized using explicit
martingale estimates. Local uniformity in $z$ is achieved by combining the estimates for individual starting points
with an application of Kolmogorov's H\"older criterion.
\eg

\begin{proposition}\label{FLUD}
There is an absolute constant $C<\infty$ such that, for all $\ve\in[2\d,1]$ and all $m\in\N$,
$$
\PP(\O\sm\O(m,\ve))\le C\bb (m + \ve^{-2}) \eb e^{-\ve^3/(Cc)}.
$$
\end{proposition}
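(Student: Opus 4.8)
The plan is to study, for each fixed $z\in\tilde D_0$, the Markov chain $Z_n=\tilde\G_n(z)$ (its Markov property was noted in the Introduction), and then to upgrade the resulting one-point estimate to one that is uniform in $z$. Since $\tilde\G_{n+1}=\tilde F_{n+1}^{-1}\circ\tilde\G_n$ with $\tilde F_{n+1}^{-1}(w)=\tilde G(w-i\Th_{n+1})+i\Th_{n+1}$, we have $Z_{n+1}=\tilde G(Z_n-i\Th_{n+1})+i\Th_{n+1}$, which is well defined — and then $z\in\tilde D_n$ and $Z_n=\tilde\G_n(z)$ — as long as $\re Z_k>\log(1+\d)$ for $k<n$. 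Put $M_n=Z_n-z+cn$ and $h(w)=\tilde G(w)-w+c$, so that $M_{n+1}-M_n=h(Z_n-i\Th_{n+1})$. The key point is that $M$ is a martingale: $h$ is holomorphic on $\{\re w>\log(1+\d)\}$ (because $K\sse K_0\cup\{|z-1|\le\d\}\sse\{|z|\le1+\d\}$), is $2\pi i$-periodic (as $\tilde G(w+2\pi i)=\tilde G(w)+2\pi i$), and tends to $0$ as $\re w\to\infty$; hence its mean over any period vanishes, giving $\E[M_{n+1}-M_n\mid\cF_n]=\tfrac1{2\pi}\int_0^{2\pi}h(Z_n-i\th)\,d\th=0$.

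Next I would control the fluctuations of $M$. Let $\t=\inf\{n:|M_n|\ge\ve\}$ and restrict to $n\le n^*:=\min\!\big(m,\lfloor(\re z-4\ve)/c\rfloor\big)$; for $n<\t$ this forces $\re Z_n\ge\max(3\ve,\re z-cn-\ve)>\log(1+\d)$ and $|e^{Z_n-i\th}-1|\ge2\d$. By Proposition \ref{GZE} (the first bound in (\ref{TGE})), $|h(w)|\le Cc/|e^w-1|$, so the increments of $M_{n\wedge\t}$ are at most $Cc/\ve$, while — using $\int_0^{2\pi}|ae^{i\phi}-1|^{-2}d\phi=2\pi/(a^2-1)$ for $a>1$ —
$$
\E\big[|M_{n+1}-M_n|^2\mid\cF_n\big]=\frac1{2\pi}\int_0^{2\pi}|h(Z_n-i\th)|^2\,d\th\le\frac{C^2c^2}{e^{2\re Z_n}-1}.
$$
Summed along the near-deterministic path $\re Z_n\approx\re z-cn$ this telescopes to a predictable quadratic variation $\lesssim c\log(1/\ve)$; the naive bound $O(mc^2/\ve)$ would be useless here. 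A maximal Bernstein/Freedman inequality for the real and imaginary parts of $M_{n\wedge\t}$ then yields, for each fixed $z$,
$$
\PP\big(\exists\,n\le m:\ \re z\ge cn+4\ve,\ z\notin\tilde D_n\text{ or }|\tilde\G_n(z)-z+cn|\ge\ve\big)\le\PP(\t\le n^*)\le C\exp\!\Big(-\frac{c'\ve^2}{c\log(1/\ve)}\Big),
$$
and since $\ve\log(1/\ve)$ is bounded on $(0,1]$ the right side is $\le C\exp(-\ve^3/(Cc))$.

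It remains to pass to a bound on $\PP(\O\sm\O(m,\ve))$. First, the $\tilde\Phi_n$-condition is implied by the $\tilde\G_n$-condition: writing $w=\tilde\Phi_n(z)$ we have $\tilde\Phi_n(z)-z-cn=-(\tilde\G_n(w)-w+cn)$, and a connectedness argument in $z$ on the half-plane $\{\re z>5\ve\}$ — using that $\tilde\Phi_n$ is continuous there with $\tilde\Phi_n(z)-z-cn\to0$ at $\infty$, and that $|\tilde\Phi_n(z)-z-cn|\le\ve$ forces $\re w\ge cn+4\ve$ — shows the $\tilde\G_n$-condition entails the $\tilde\Phi_n$-condition; the slack between $4\ve$ and $5\ve$ is exactly what makes this bootstrap close. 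So it suffices to bound the probability that the $\tilde\G_n$-condition fails for some $z$. For this one applies the pointwise estimate above over a sufficiently fine net of starting points and fills in between net points using Kolmogorov's Hölder criterion for the random fields $z\mapsto\tilde\G_n(z)$, whose increments are controlled via the chain rule $\tilde\G_n'(z)=\prod_{k=1}^n\tilde G'(\tilde\G_{k-1}(z)-i\Th_k)$ and the second bound in (\ref{TGE}); a crude a priori inclusion $\tilde D_n\supseteq\{\re w>cn+C_0\}$, coming from $\mathrm{diam}(K_n)\le4e^{cn}$, ensures the region in question is a genuine half-strip rather than one riddled with holes. The net has polynomially many points, which is the source of the factor $C(m+\ve^{-2})$ and is dominated by the exponential term.

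I expect the uniformity step to be the main obstacle. Near $\re z=4\ve$ the derivative $\tilde\G_n'$ can be as large as $e^{O(1/\ve)}$ in the worst case, so no deterministic Lipschitz bound combined with a fine net will work; instead one exploits that the two-point difference $\tilde\G_n(z)-\tilde\G_n(z')$ is again a martingale, with predictable quadratic variation only $O(|z-z'|^2c/\ve^2)$ (and $c/\ve^2\le3/16$ under (\ref{D13})), to show that the modulus of continuity of $z\mapsto\tilde\G_n(z)-z$ is typically of order one, so that Kolmogorov's criterion delivers a merely polynomial tail on its Hölder norm. Securing the sharp $\lesssim c\log(1/\ve)$ quadratic-variation bound in the one-point estimate (the naive bound being far too weak) and the careful juggling of the thresholds $3\ve,4\ve,5\ve,\log(1+\d)$ are the remaining points that require care.
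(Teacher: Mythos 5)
Your proposal follows essentially the same route as the paper: the same martingale $M_n(z)=\tilde\G_n(z)-z+cn$ whose mean-zero increments come from the $2\pi i$-periodicity of $\tilde G(w)-w+c$, a one-point exponential concentration bound (you use Freedman with the $\th$-averaged conditional variance where the paper uses Azuma with the worst-case increment $C_1c/(R-cn-\ve-\d)$, but both yield $e^{-\ve^3/(Cc)}$), uniformity via the observation that two-point differences $M_n(z)-M_n(z')$ are again martingales together with Kolmogorov's criterion and a net, and the same $5\ve$-versus-$4\ve$ bootstrap to recover the $\tilde\Phi_n$-condition from the $\tilde\G_n$-condition. The one detail to execute carefully is that the net must be one-dimensional — the paper works line by line on $\ell_R$, $R=2(k+1)\ve$, with the stopping time $T_R$ making the domain deterministic there, and then extends to the unbounded half-plane $\{\re z\ge R\}$ by the maximum principle applied to the bounded holomorphic function $\tilde\G_n(z)-z+cn$ — since with only second-moment increment bounds Kolmogorov's criterion gives no H\"older modulus over a two-dimensional parameter set, and no finite net covers $\{\re z\ge cn+4\ve\}$ without that maximum-principle step.
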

\begin{proof}
It will suffice to consider the case where $\ve^3\ge c$.
Set $M=\lceil cm/(2\ve)\rceil$.
Fix $k\in\{1,\dots,M\}$ and set $R=2(k+1)\ve$.
Consider the vertical line $\ell_R=\{z\in\C:\re(z)=R\}$.
Write $N$ for the largest integer such that $cN\le R-2\ve$.
Consider the stopping time
$$
T=T_R=\inf\{n\ge0:z\not\in\tilde D_n\text{ or }\re(\tilde\G_n(z))\le R-cn-\ve
\text{ for some }z\in\ell_R\}\wedge N.
$$
Note that
$$
\re(\tilde\G_{T-1}(z))>\ve \bb > \eb \d>\log(1+\d)
$$
so $z\in\tilde D_{T_R}$ for all $z\in\ell_R$.
Consider the events
$$
\O_R=\left\{\sup_{n\le T_R,\,z\in\ell_R}|\tilde\G_n(z)-z+cn|<\ve\right\},\q
\O_0(m,\ve)=\bigcap_{k=1}^M\O_{2(k+1)\ve}.
$$
We shall show that there is an absolute constant $C<\infty$ such that
\begin{equation}\label{ESS}
\bb \PP(\O\sm\O_R)\le C\ve^{-6/5}e^{-\ve^3/(Cc)} \eb
\end{equation}
from which it follows that
\bb
$$
\PP(\O\sm\O_0(m,\ve))\le C ( cm/ \ve +1) \ve^{-6/5} e^{-\ve^3/(Cc)} \le C(m+\ve^{-2})e^{-\ve^3/(Cc)}.
$$
\eb

Note that, on $\O_R$, we have $|\tilde\G_{T_R}(z)-z+cT_R|<\ve$ for all $z\in\ell_R$,
which forces $T_R=N$ and so $z\in\tilde D_n$ whenever $\re(z)\ge R$ and $cn\le R-2\ve$. Then, since
$\tilde\G_n(z)-z+cn$ is a bounded holomorphic function on $\tilde D_n$, we have on $\O_R$
$$
\sup_{cn\le R-2\ve,\,\re(z)\ge R}|\tilde\G_n(z)-z+cn|=
\sup_{cn\le R-2\ve,\,z\in\ell_R}|\tilde\G_n(z)-z+cn|<\ve.
$$
For \bb $n\le m$\eb, we can choose $k$ so that $R-4\ve\le cn\le R-2\ve$.
Then, if $\re(z)\ge cn+4\ve$, then $\re(z)\ge R$, so on $\O_0(m,\ve)$ we have $z\in\tilde D_n$ and
$|\tilde\G_n(z)-z+cn|<\ve$. Moreover, on $\O_0(m,\ve)$,
the image $\tilde\G_n(\ell_R)$ lies to the left of $\ell_{5\ve}$,
and hence, if $\re(w)\ge5\ve$, we have $w=\tilde\G_n(z)$ for some $\re(z)\ge R$ so that
$$
|\tilde\Phi_n(w)-w-cn|=|z-\tilde\G_n(z)-cn|<\ve.
$$
We have shown that $\O_0(m,\ve)\sse\O(m,\ve)$, which implies the claimed estimate.

It remains to prove (\ref{ESS}).
The function $\tilde G_0(z)=\tilde G(z)-z$ is holomorphic, bounded and $2\pi i$-periodic on $\tilde D$
with $\tilde G_0(z)\to-c$ as $\re(z)\to\infty$.
Hence
$$
\frac1{2\pi}\int_0^{2\pi}\tilde G_0(z-i\th)d\th=-c,\q \re(z)>\d.
$$
\bb
Let $q(r)=r\wedge r^2$. Then
$$
|\tilde G_0(z)+c|\le\frac{C_1c}{\re(z)-\d},\q |\tilde G_0'(z)|\le\frac{2C_1c}{q(\re(z)-\d)},\q\re(z)\ge2\d,
$$
where $C_1$ is the absolute constant in (\ref{TGE}). \eb
Set
$$
M_n(z)=\tilde\G_n(z)-z+cn,\q z\in\tilde D_n.
$$
Then
$$
M_{n+1}(z)-M_n(z)=\tilde G_0(\tilde\G_n(z)-i\Th_{n+1})+c.
$$
So $(M_n(z))_{n\le T}$ is a martingale for all \bb $z\in \ell_R$.
For $z\in\ell_R$ and $n\le T-1$,
$$
|M_{n+1}(z)-M_n(z)|\le\frac{C_1c}{(\re(\tilde\G_n(z))-\d)}\le\frac{C_1c}{(R-cn-\ve-\d)}
$$
and
$$
\sum_{n=0}^{N-1}\frac{C_1^2c^2}{(R-cn-\ve-\d)^2}
\le\int_0^{R-2\ve}\frac{C_1^2cds}{(R-s-\ve-\d)^2}=\frac{C_1^2c}{\ve-\d}\le \frac{2C_1^2c}{\ve}.
$$
So, by the Azuma-Hoeffding inequality, for all $z\in\ell_R$,
\begin{equation}\label{MZ}
\PP\left(\sup_{n\le T}|M_n(z)|\ge\ve/2\right)\le2e^{-\ve^3/(16C_1^2c)}.
\end{equation}
\eb
Fix $z,z'\in\ell_R$, define $\tilde M_n=M_n(z)-M_n(z')$ and set
$$
f(n)=\E\left(\sup_{k\le T\wedge n}|\tilde M_k|^2\right).
$$
Note that $|\tilde\G_n(z)-\tilde\G_n(z')|\le|z-z'|+|\tilde M_n|$ so, for $n\le T-1$,
$$
|\tilde M_{n+1}-\tilde M_n|=|\tilde G_0(\tilde\G_n(z)-i\Th_{n+1})-\tilde G_0(\tilde\G_n(z')-i\Th_{n+1})|
\le\frac{2C_1c(|z-z'|+|\tilde M_n|)}{q(R-cn-\ve-\d)}.
$$
Then, by Doob's $L^2$-inequality,
$$
f(n)\le4\E\left(|\tilde M_{T\wedge n}|^2\right)\le4\sum_{k=0}^{n-1}\E(|\tilde M_{k+1}-\tilde M_k|^21_{\{k\le T\}})
\le32C_1^2c^2\sum_{k=0}^{n-1}\frac{|z-z'|^2+f(k)}{q(R-ck-\ve-\d)^2}
$$
so, by a Gronwall-type argument,
$$
\E\left(\sup_{n\le T}|\tilde M_n|^2\right)=f(N)\le|z-z'|^2\left(\exp\int_{\ve-\d}^\infty\frac{32C_1^2cds}{q(s)^2}-1\right).
$$
Now, for $r\in(0,1]$,
$$
\int_r^\infty\frac{ds}{q(s)^2}=\frac13\left(2+\frac1{r^3}\right)
$$
and $\ve/2\le\ve-\d\le1$ and $\ve^3\ge c$. So we deduce the existence of an absolute constant \bb$16 C_1^2 < C_2<\infty$ such that \eb $f(N)\le C_2c|z-z'|^2/\ve^3$.
Hence by Kolmogorov's lemma, $C_2$ may be chosen so that, for some random variable \bb $M$, \eb with $\E(M^2)\le C_2c/\ve^3$, we have
\bb
$$
\sup_{k\le T}|M_k(z)-M_k(z')|\le M|z-z'|^{1/3}
$$
\eb
for all $z,z'\in\ell_R$. So, by Chebyshev's inequality, for any $L\in\N$,
\bb
$$
\PP\left(\sup_{k\le T}|M_k(z)-M_k(z')|\ge\ve/2\text{ for some $z,z'\in\ell_R$ with $|z-z'|\le \pi /L$}\right)\le(\pi/L)^{2/3}4C_2c/\ve^5.
$$
\eb
On combining this with (\ref{MZ}), we obtain
\bb
$$
\PP(\O\sm\O_R)\le L e^{-\ve^3/(C_2c)}+(\pi/L)^{2/3}4C_2c/\ve^5
$$
\eb
from which (\ref{ESS}) follows on optimizing over $L$.
\end{proof}

We note two consequences of the event $\O(m,\ve)$. First we deduce an estimate for the normalized conformal maps $\Phi_n$.
On $\O(m,\ve)$, for $n\le m$ and $|z|=e^{5\ve}$, we have
$$
|\log(e^{-cn}\Phi_n(z))-\log z|<\ve
$$
and so
$$
|e^{-cn}\Phi_n(z)-z|<\ve e^{6\ve}.
$$
The last estimate then holds whenever $|z|\ge e^{5\ve}$ by the maximum principle.

Second, we show that on the event $\O(m,\ve)$, for $n\le m$ and $R\le cn$, there is no disc of radius $\bb 56\ve \eb$  with centre on the line $\ell_R=\{z\in\C:\re(z)=R\}$
which is disjoint from $\tilde K_n$.
Since the sets $\tilde K_n$ are increasing in $n$, we may assume that $R>c(n-1)$.
Fix $y\in\R$ and set $w=6\ve+iy$. Note that \bb$|\tilde\Phi_n(w)-(R+iy)|< \ve+|6\ve+cn-R| < 7\ve  + c < 8 \ve$. Here we have used $c \le 3\d^2/4 \le \d/4 < \ve$\eb.
By Cauchy's integral formula
$$
\tilde\Phi_n'(w)=1+\frac1{2\pi i}\int_{|z-w|=\ve}\frac{\tilde\Phi_n(z)-z-cn}{(z-w)^2}dz
$$
so $|\tilde\Phi_n'(w)|\le2$. Then, by Koebe's $1/4$ theorem,
$$
d(\tilde\Phi_n(w),\partial\tilde D_n)\le 4|\tilde\Phi_n'(w)|d(w,\partial\tilde D_0)\le 48\ve.
$$
and so
\begin{equation}\label{NBH}
d(R+iy,\partial\tilde D_n)\le \bb 56\ve. \eb
\end{equation}

\section{Harmonic measure and the location of particles}\label{HMLP}
In this section we obtain an estimate on the location of the particles $P_{n+1}=\Phi_n(e^{i\Th_{n+1}}P)$ in the plane.
From the preceding section, we know that $\Phi_n((1+\ve)e^{i\th})$ is close to $(1+\ve)e^{cn+i\th}$ with high probability, when $\ve$ is suitably
large in relation to the particle radius $\d$.
This must break down as $\ve\to0$, at least when particles are attached at a single point, since the map $\th\mapsto\Phi_n(e^{i\th})$
parametrizes the whole cluster boundary by harmonic measure.
Nevertheless, we shall show that the approximation breaks down only on a set of very small harmonic measure, and in fact
the whole of each particle $P_{n+1}$ is close to $e^{cn+i\Th_{n+1}}$, in a sense made precise below.
Throughout this section, we assume that condition \eqref{D13} holds and we make also the following \bg {\em non-degenerate contact} \eg condition
\begin{equation}\label{D14}
P\sse\{z\in\C:\re(z)>1\}.
\end{equation}

\begin{lemma}\label{NOT}
There is an absolute constant $C<\infty$ with the following properties.
Let $D^*$ be any simply connected neighbourhood of $\infty$ in $D_0$ and set $K^*=\C\sm D^*$.
Denote by $\mu$ the harmonic measure from $\infty$ in $D^*$ of $K^*\sm K_0$
and by $N$ the number of connected components of $K^*\sm K_0$.
Then
\begin{equation}\label{SN}
\PP(K^*\cap K_1\not= K_0)\le CN\sqrt\mu.
\end{equation}
Assume further that $16\pi\mu\le\d$.
Then
$$
\PP(K^*\cap K_\infty\not= K_0)\le CN\sqrt\mu/\d.
$$
\end{lemma}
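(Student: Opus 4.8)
The plan is to prove the two bounds in turn: the first, \eqref{SN}, directly, and the second by iterating it along the growth of the cluster while tracking a suitable harmonic measure.

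For \eqref{SN}: since $K_1=K_0\cup P_1$ with $P_1=e^{i\Th_1}P$, the event is $\{P_1\cap(K^*\sm K_0)\neq\es\}$. As $D^*$ is simply connected and $K_0\sse K^*$, each of the $N$ connected components $C$ of $K^*\sm K_0$ satisfies $\widebar C\cap S\neq\es$, where $S=\partial K_0$; write $\nu_C=\omega_{D_0}(\infty,\widebar C)$, so $\nu_C\le\mu$ for each $C$ by monotonicity of harmonic measure in $D_0$. By a union bound it suffices to prove $\PP(P_1\cap\widebar C\neq\es)\le C\sqrt{\nu_C}$ for one such component and sum over the $N$ of them. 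Here condition \eqref{D14} enters: if $e^{-i\th}w\in P$ then $\re(e^{-i\th}w)>1$ and $|e^{-i\th}w-1|\le\d$, which forces $|w|-1\le\d$ and $|\arg w-\th|\le C\sqrt{\,|w|-1\,}$. So $\{\th:e^{i\th}P\cap\widebar C\neq\es\}$ lies in the $C\sqrt{h_C}$-neighbourhood of the angular projection onto $S$ of the part of $\widebar C$ in $\{|z|\le1+\d\}$, where $h_C=\sup\{s\le\d:\exists w\in\widebar C,\ |w|=1+s\}$, giving $\PP(P_1\cap\widebar C\neq\es)\le(W_C+2C\sqrt{h_C})/(2\pi)$ with $W_C$ the width of that projection. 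It then remains to bound $W_C$ and $h_C$ by $\nu_C$: since $\widebar C$ joins $S$ to $\{|z|=1+h_C\}$, Beurling's projection theorem gives $\nu_C\ge c\,h_C$; and since $\widebar C$ is a continuum attached to $S$ of angular width $W_C$, comparison with an arc of $S$ of that width (again a Beurling-type symmetrisation) gives $\nu_C\ge c\,(W_C\wedge\pi)^2$, with $\nu_C\ge c$ trivially if $W_C>\pi$. Combining, $W_C+\sqrt{h_C}\le C\sqrt{\nu_C}$.

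For the second bound set $T=\inf\{n\ge1:K^*\cap K_n\neq K_0\}$; we must show $\PP(T<\infty)\le CN\sqrt\mu/\d$. On $\{T>n\}$ the fixed set $\widebar{K^*\sm K_0}$ lies in $D_n$; put $L_n=\G_n(\widebar{K^*\sm K_0})\sse D_0$, with $\G_n=\Phi_n^{-1}$, and $\mu_n=\omega_{D_0}(\infty,L_n)$ (and $\mu_n=0$ once $n\ge T$). By conformal invariance and monotonicity ($K_n\supseteq K_0$), $\mu_n=\omega_{D_n}(\infty,\widebar{K^*\sm K_0})$ is non-increasing with $\mu_0\le\mu$; moreover $L_n$ has the same $N$ components, each attached to $S$, and—since by the Beurling bound above the radial reach of each component of $L_n$ is $\le C\mu_n\le C\mu\le\d$ once the constant in $16\pi\mu\le\d$ is chosen suitably—$L_n$ lies in the annulus $\{1\le|z|\le1+\d\}$. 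On $\{T>n\}$ we have $\{T=n+1\}=\{e^{i\Th_{n+1}}P\cap L_n\neq\es\}$, so \eqref{SN} applied with $K^*$ replaced by $K_0\cup\widebar{L_n}$ yields
$$
\PP(T=n+1\mid\cF_n)\,\mathbbm{1}_{\{T>n\}}\le CN\sqrt{\mu_n}.
$$
The heart of the matter is a matching lower bound on the typical decrease of $\mu_n$. Adding the obstacle $Q=e^{i\Th_{n+1}}P$ changes harmonic measure by the identity $\omega_{D_0}(\infty,L_n)-\omega_{D_0\sm Q}(\infty,L_n)=\PP_\infty(\t_Q\le\t_{L_n}<\t_{K_0})$, so, averaging over $\Th_{n+1}$ and using $\mu_{n+1}\le\omega_{D_0\sm Q}(\infty,L_n)$,
$$
\E[\mu_n-\mu_{n+1}\mid\cF_n]\ge\frac1{2\pi}\int_0^{2\pi}\PP_\infty\big(\t_{e^{i\th}P}\le\t_{L_n}<\t_{K_0}\big)\,d\th\quad\text{on }\{T>n\}.
$$
Since $L_n\sse\{1\le|z|\le1+\d\}$ is attached to $S$, a Brownian path from $\infty$ reaching $L_n$ before $K_0$ must, after entering $\{|z|<1+\d\}$, traverse a radial width of order $\d$ in the annulus before hitting $L_n$, and thereby sweeps an angular interval of length of order $\d$; along that portion it meets $e^{i\th}P$ for a set of $\th$ of measure $\ge c\d$ (using that $P$ joins $1$ to $1+\d$ and so meets every circle $\{|z|=1+s\}$, $0<s\le\d$). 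Hence the integral is $\ge c\d\,\mu_n$, i.e.
$$
\E[\mu_{n+1}\mid\cF_n]\le(1-c\d)\mu_n\quad\text{on }\{T>n\},
$$
for an absolute $c>0$.

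Iterating this gives $\E[\mathbbm{1}_{\{T>n\}}\mu_n]\le(1-c\d)^n\mu$, so $\E[\mathbbm{1}_{\{T>n\}}\sqrt{\mu_n}]\le(1-c\d)^{n/2}\sqrt\mu$ by Cauchy--Schwarz, and therefore
$$
\PP(T<\infty)=\sum_{n\ge0}\PP(T=n+1)\le CN\sum_{n\ge0}\E\big[\mathbbm{1}_{\{T>n\}}\sqrt{\mu_n}\big]\le CN\sqrt\mu\sum_{n\ge0}(1-c\d)^{n/2}\le\frac{C'N\sqrt\mu}{\d},
$$
as required. I expect the main obstacle to be the last Brownian estimate—controlling, uniformly over the possibly thin and irregular sets $L_n$, how much a single uniformly rotated particle reduces their harmonic measure on average—together with the angular harmonic-measure bound $\nu_C\ge c(W_C\wedge\pi)^2$ used in the one-step estimate; the remaining ingredients (conformal invariance, monotonicity of harmonic measure, Beurling's projection theorem, and the bookkeeping) are routine.
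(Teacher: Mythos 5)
Your overall architecture is the same as the paper's: the one\-/step bound \eqref{SN} comes from localizing each low\-/harmonic\-/measure component of $K^*\sm K_0$ near a point of the unit circle and using the non-degenerate contact condition \eqref{D14} to turn radial reach $h$ into an angular window of size $\sqrt h$; the infinite\-/time bound comes from tracking the pulled\-/back harmonic measure $\mu_n$ (the paper's $H_n$), proving $\E[\mu_{n+1}\mid\cF_n]\le(1-c\d)\mu_n$, and summing $CN\,\E\sqrt{\mu_n}$ by Jensen and a geometric series. (Your width bound $\nu_C\ge c(W_C\wedge\pi)^2$ is weaker than what the paper's reflection argument \eqref{EHM} yields --- harmonic measure dominates a constant times the full diameter of an attached continuum --- but it suffices for $\sqrt{\nu_C}$.)

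The genuine gap is in your justification of the decay estimate, i.e.\ of $\frac1{2\pi}\int_0^{2\pi}\PP_\infty(\t_{e^{i\th}P}\le\t_{L_n}<\t_{K_0})\,d\th\ge c\d\,\mu_n$. Two problems. First, ``sweeps an angular interval of order $\d$, hence meets $e^{i\th}P$ for a set of $\th$ of measure $\ge c\d$'' is not valid: the Brownian crossing of the annulus and $e^{i\th}\widebar P$ are both merely continua joining the boundary circles, and two such continua with overlapping angular projections need not intersect. For a general $P$ satisfying only \eqref{D13}--\eqref{D14} the rotated particle is not a radial barrier; to force an intersection you need the entry and exit angles of the crossing to lie on opposite sides of the particle's \emph{entire} angular range, which requires an angular displacement exceeding $C\d$, not merely of order $\d$. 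Second, even with a corrected topological criterion, you need this displacement to occur with probability bounded below \emph{conditionally on} $\{\t_{L_n}<\t_{K_0}\}$; that event depends on the future of the path and you give no argument that the conditioning does not bias the crossing. The paper closes exactly this point with a rotational-symmetry decoupling: starting $B$ uniformly on the circle of radius $2$, the event $\{T(rK_0)\le T(K_1)\}$ (reaching radius $r=1+\d/2$ without hitting the uniformly rotated particle) is independent of $\{T(K^*)<T(K_0)\}$, because the former is a rotation-invariant functional of the path up to $T(rK_0)$ while the latter depends only on $B_{T(rK_0)}$ (uniform on the circle of radius $r$) and the subsequent path; the former event then has probability at most $1-\d/C$ by the \emph{unconditional} estimate \eqref{EHM} applied to the particle, using $1+\d\in P$. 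You would need to supply this independence argument, or an equivalent decoupling, to make your Brownian estimate rigorous.
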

\begin{proof}
By the estimate (\ref{EHM}), each of the $N$ connected components of $K^*\sm K_0$ is
contained in a disc of radius $8\pi\mu$ with centre on the unit circle.
The non-degenerate contact assumption
then allows us to choose \bb $C_1<\infty$ \eb such that $P_1$ intersects that component only if $e^{i\Th_1}$ lies in
a concentric disc of radius $C_1\sqrt\mu$. The estimate (\ref{SN}) follows.

Consider a complex Brownian motion $B$ with $B_0$ uniformly distributed on the circle of radius $2$
centred at $0$, and independent of $\Th_1$. Set $r=1+\d/2$ and note that $K^*\sse rK_0$. Set
$$
T(K)=\inf\{t\ge0:B_t\in K\}.
$$
Note that, since $\Th_1$ is uniformly distributed on $[0,2\pi)$, the events $\{T(rK_0)\le T(K_1)\}$ and
$\{T(K^*)<T(K_0)\}$ are independent. We use the estimate (\ref{EHM}) and our assumption that $1+\d\in P$ to obtain
$$
\PP(T(rK_0)\le T(K_1))\le 1-\d/C_1.
$$
Note that, since $T(rK_0)\le T(K^*)$, we have
$$
\{T(K^*)<T(K_1)\}\sse\{T(rK_0)\le T(K_1)\}\cap\{T(K^*)<T(K_0)\}.
$$
Hence
$$
\PP(T(K^*)<T(K_1))\le(1-\d/C)\PP(T(K^*)<T(K_0))=(1-\d/C_1)\mu.
$$

Set
$$
H_n=\PP(T(K^*)<T(K_n)|\cF_n)
$$
and note that $H_0=\mu$.
By conformal invariance of Brownian motion, we have
$$
H_n=\PP(T(K_n^*)<T(K_0)|\cF_n)
$$
where $K^*_n=\G_n(K^*\sm K_n)\cup K_0$, and moreover
$$
\E(H_{n+1}|\cF_n)=\PP(T(K^*_n)<T(K'_1)|\cF_n)
$$
where $K'_1$ is an independent copy of $K_1$.
Since $K^*_n\sse rK_0$, the argument of the preceding paragraph applies to show that
$\E(H_{n+1}|\cF_n)\le(1-\d/C_1)H_n$. Hence $\E(H_n)\le(1-\d/C_1)^n\mu$ for all $n$.

On the event $\{K^*\cap K_n=K_0\}$, the set $K_n^*\sm K_0$ has $N$ connected components, and its harmonic
measure from $\infty$ in \bb $\C\sm K^*_n$ \eb is $H_n$. Define $P_1'=\G_n(P_{n+1})$. Then $P'_1$ has the same distribution as $P_1$
and is independent of $\cF_n$. So the argument leading to (\ref{SN}) applies to give
$$
\PP(K^*\cap P_{n+1}\not=\es|\cF_n)=\PP(K_n^*\cap P'_1\not=\es|\cF_n)\le C_1N\sqrt{H_n}.
$$
Hence
\begin{align*}
\PP(K^*\cap K_\infty\not= K_0)
&\le\sum_{n=0}^\infty\PP(\{K^*\cap K_n=K_0\}\cap\{K^*\cap P_{n+1}\not=\es\})\\
&\le\sum_{n=0}^\infty C_1N\E(\sqrt{H_n})\le C_1^2N\sqrt\mu/\d.
\end{align*}
\end{proof}

Write $\tilde P$ for the connected component of $\tilde K\sm\tilde K_0$ near $0$. Set
$$
\tilde P_n=\tilde\Phi_{n-1}(\tilde P+i\Th_n),\q \tilde A_n=\tilde\Phi_{n-1}(i\Th_n).
$$
Then $\tilde P_n$ is a component of the $2\pi i$-periodic set $\tilde K_n\sm\tilde K_{n-1}$
and it is attached to $\tilde K_{n-1}$ at $\tilde A_n$.
\bg
For the next result, we shall use a further assumption on the particle $P$ which allows us
to prove that none of the sets $\tilde P_n$ contain a certain size of fjord, even though they
have been distorted by the maps $\tilde\Phi_{n-1}$. The useful form of this assumption is expressed
in terms of harmonic measure. After stating this, we will give a geometrically more obvious sufficient
condition.
We assume the following {\em harmonic measure condition}.
\begin{align}
&\text{For all sequences $(z_1,w_1,z_2,w_2)$ of points in $\partial P$, listed anticlockwise, and for any}\notag\\
&\text{interval $I$ of $\partial D_0$, if for $i=1$ and $i=2$ at least the $3/4$ of the harmonic measure}\notag\\
&\text{on $\partial D_0$ from $w_i$ is carried on $I$, then for either $i=1$ or $i=2$ at least $1/4$ of the}
\label{HMCO}\\
&\text{harmonic measure on $\partial D_0$ from $z_i$ is carried on $I$.}\notag
\end{align}

This condition is implied by the following property of the image $\phi(P)$, where $\phi$ is the conformal map from $D_0$ to the upper half-plane $H_0$,
as in footnote 4. For $z=x+iy$ and $z'=x'+iy'$ in $H_0$, write $S(z,z')$ for the smallest closed square in $H_0$ containing
all the points $x-y,x+y,x'-y',x'+y'$. Then the preceding harmonic measure condition is implied by the following {\em square condition}.
For all $z,z'\in\partial(\phi(P))$,
at least one of the boundary arcs of $\partial(\phi(P))$ from $z$ to $z'$ is contained in $S(z,z')$. To see this, suppose $I\sse\R$ is an interval
which carries at least $3/4$ of the harmonic measure on $\R$ starting from $z$, then $(x-y,x+y)\sse I$. Hence, if the same is true for $z'$, then
$S(z,z')\cap\R\sse I$. Then, for any point $w\in S(z,z')$, $I$ carries at least $1/4$ of the harmonic measure on $\R$ starting from $w$.
We have used here the fact that the harmonic measure on $\R$ starting from $i$ places equal mass on the intervals $(-\infty,-1),(-1,0),(0,1),(1,\infty)$.
It is easy to check the square condition for $P=(1,1+\d]$ and $P=\{|z-1+\d/2|=\d/2\}$, when $\phi(P)$ is also a slit or a disc.
\eg

Consider for $\nu\in[0,\infty)$ the event
$$
\O(m,\ve,\nu)=\{\re(z)>c(n\wedge m)-\ve\nu\text{ for all $z\in\tilde P_{n+1}$ and all $n\ge0$}\} \cap \O(m,\ve).
$$
\bg
In conjunction with Proposition \ref{FLUD}, the following estimate implies that, when $m=\lfloor\d^{-6}\rfloor$ and $\ve=\d^{2/3}\log(1/\d)$
and $\nu=(\log(1/\d))^2$, the event $\O(m,\ve,\nu)$ has high probability as $\d\to0$.
\eg

\begin{proposition}\label{NOTLATE}
There exists an absolute constant $C<\infty$ such that, for all \bb$\epsilon \in [2\d,1]$ and \eb $\nu\in[0,\infty)$,
$$
\PP(\O(m,\ve)\sm\O(m,\ve,\nu))\le Cm \bb (m+\d^{-1}) \eb e^{-\nu/C}.
$$
\end{proposition}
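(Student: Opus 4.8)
The plan is to treat separately the particles that arrive by time $m$ and those that arrive later, in each case using the conformal invariance of the $\alpha=0$ model to reduce to Lemma~\ref{NOT}, with the smallness of harmonic measure that this needs supplied by a tail estimate for the cluster boundary built from (\ref{NBH}) and the no‑fjord consequence of (\ref{HMCO}). First a harmless reduction: if $\nu$ is below an absolute multiple of $\log(1/\d)$ then $Cm(m+\d^{-1})e^{-\nu/C}$ already exceeds $1$ once the final $C$ is large (using $\d\le1/3$; and if $m=0$ the event is empty, since each $\tilde P_{n+1}\sse\{\re\ge0\}$ while the threshold $-\ve\nu$ is negative), so I may assume $\nu\ge C_1\log(1/\d)$ for a suitable absolute $C_1$; this in particular secures the hypothesis $16\pi\mu\le\d$ of the second part of Lemma~\ref{NOT} for the harmonic measures $\mu$ appearing below.

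The central ingredient is a tail estimate: writing $\O_n\supseteq\O(m,\ve)$ for the $\cF_n$-measurable event obtained from the defining conditions of $\O(m,\ve)$ at the single index $n\le m$, on $\O_n$ the harmonic measure from $\infty$ in $\tilde D_n$ of $\{z\in\partial\tilde K_n:\re(z)\le cn-s\}$ is at most $Ce^{-s/(C\ve)}$ for every $s\ge0$. Granting this, fix $n\le m$. Since $P_{n+1}=\Phi_n(e^{i\Th_{n+1}}P)$, conditionally on $\cF_n$ the event $\{\,\exists\,z\in\tilde P_{n+1}:\re(z)\le cn-\ve\nu\,\}$ is (up to a null set) the event that a fresh copy $P_1'$ of $P$ meets $K_n^*\sm K_0$, where $K_n^*=\G_n(\{|z|\le e^{cn-\ve\nu}\}\cap\widebar{D_n})\cup K_0$ is a compact subset of $\widebar{D_0}$ containing $K_0$ (and $K_n^*=K_0$, so the event is null, when $cn\le\ve\nu$). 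By conformal invariance $K_n^*\sm K_0$ has harmonic measure $\mu_n\le Ce^{-\nu/C}$ on $\O_n$ (the tail estimate with $s=\ve\nu$) and at most $N_n\le m+1$ connected components (a crude count, at most one per particle of $\tilde K_n$). Lemma~\ref{NOT} then gives $\PP(\O_n\cap\{\exists\,z\in\tilde P_{n+1}:\re(z)\le cn-\ve\nu\})\le C(m+1)\sqrt{Ce^{-\nu/C}}$, and summing over $n=0,\dots,m$ contributes at most $Cm^2e^{-\nu/C}$ after relabelling $C$.

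For the particles with $n>m$ I condition on $\cF_m$ and use that, in the coordinates relative to $\Phi_m$, the subsequent evolution is a fresh copy of the model; the event $\{\,\exists\,n>m,\ z\in\tilde P_{n+1}:\re(z)\le cm-\ve\nu\,\}$ is then contained in $\{K^*\cap K_\infty\ne K_0\}$ with $K^*=\G_m(\{|z|\le e^{cm-\ve\nu}\}\cap\widebar{D_m})\cup K_0$, which on $\O_m$ again has harmonic measure $\le Ce^{-\nu/C}$ and at most $m+1$ components. The second part of Lemma~\ref{NOT}, available by the reduction, bounds this conditional probability by $C(m+1)\sqrt{Ce^{-\nu/C}}/\d$, contributing at most $Cm\d^{-1}e^{-\nu/C}$. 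Now, inside $\O(m,\ve)$, the failure of $\O(m,\ve,\nu)$ forces one of these two events, since $c(n\wedge m)-\ve\nu$ equals $cn-\ve\nu$ for $n\le m$ and equals $cm-\ve\nu$ for $n>m$; adding the two contributions and relabelling gives $\PP(\O(m,\ve)\sm\O(m,\ve,\nu))\le Cm(m+\d^{-1})e^{-\nu/C}$.

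The remaining task, and the step I expect to be the main obstacle, is the tail estimate, which is exactly where (\ref{HMCO}) is used. Writing the harmonic measure through the conformal map as $(2\pi)^{-1}\operatorname{Leb}\{y:\re(\tilde\Phi_n(iy))\le cn-s\}$, one must show that $\tilde\Phi_n$ can carry the boundary point $iy$ a distance $s$ to the left of the frontier $\{\re\approx cn\}$ only for $y$ in a set of measure $\le Ce^{-s/(C\ve)}$. From (\ref{NBH}) one gets that $\tilde D_n\cap\{\re\le cn\}$ is a channel of width $\le112\ve$, whence a plain Koebe estimate gives only $|\tilde\Phi_n'(t+iy)|\le224\ve/t$ there, whose $t$-integral diverges; (\ref{HMCO}) is designed precisely to close this gap. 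Its no‑fjord consequence prevents any individual distorted particle $\tilde P_j$ from carving a wide deep indentation of $\tilde K_n$, which both keeps the number of deep fjords linear in the number of particles (so that the crude count $N_n\le m+1$ suffices) and allows $\tilde D_n$ near a putative deep point to be compared with a conformal strip of width $O(\ve)$, so that a Beurling‑type estimate across the $\asymp s/\ve$ nested level curves $\tilde\Phi_n(\ell_t)$ yields the exponential decay. Transporting the square form of (\ref{HMCO}) for the basic particle through the maps $\tilde\Phi_{n-1}$, and combining it with (\ref{NBH}) to make this last step rigorous, is the technical heart of the proof.
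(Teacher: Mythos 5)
Your architecture matches the paper's: reduce to large $\nu$, use an exponential tail estimate for the harmonic measure of the deep part of $\partial\tilde K_n$, bound the number of connected components of the deep region, and feed both into Lemma \ref{NOT} (first part for $n\le m$, second part for $n>m$). But the two claims you defer to the end are not peripheral technicalities --- they \emph{are} the proof, and your sketch of how to establish them misallocates the hypotheses. The tail estimate does not need (\ref{HMCO}) at all, and the Koebe/derivative route you describe is a dead end you need not enter: from (\ref{NBH}), every point of $\ell_{R}$ with $R\le cn$ lies within $56\ve$ of the connected set $\tilde K_n$, so the Beurling projection estimate gives a fixed probability (which can be made $\le e^{-1}$ by choosing the spacing $\b\ve$ with $\b=56A^2e^2$) that a Brownian motion advances one level $\ell_{R(k)}\to\ell_{R(k+1)}$ without hitting $\tilde K_n$; iterating via the strong Markov property over $\asymp\nu$ levels gives $e^{-\nu/C}$ directly. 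This is exactly (\ref{HMK}) in the paper.

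The genuinely hard step is the one you dismiss as ``a crude count, at most one per particle.'' Without (\ref{HMCO}) there is no a priori reason a single distorted particle $\tilde P_{k_0+1}$ cannot bound two or more distinct fjord entrances at depth $R(\nu_0)$, and then $N_n$ is not controlled by $n$. The paper's argument here is a four-point configuration argument: if one particle supplied two entrances, there would be points $z,z'$ at depth $R(2\nu_0)$ and $w,w'$ at depth $R(\nu_0)$, interlaced on $\partial\tilde P_{k_0+1}$, with $z,z'$ seeing an interval $I\sse\partial\tilde D_{k_0}$ with harmonic measure $>3/4$ (by the Beurling iteration again) while $w,w'$ see it with harmonic measure $<1/4$; pulling back by $\tilde\G_{k_0}$ and applying (\ref{HMCO}) to the basic particle yields a contradiction. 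Nothing in your proposal supplies this argument, and ``transporting the square form of (\ref{HMCO}) through $\tilde\Phi_{n-1}$'' is precisely what you cannot do directly --- the condition is stated in conformally invariant terms so that it can be applied after pulling the configuration back to the undistorted particle, not pushed forward. As written, your proof establishes the reduction scheme but not the proposition.
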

\begin{proof}
We use the following Beurling estimate. There is an absolute constant $A\in[1,\infty)$ with the following property.
For any $\eta\in(0,1]$ and any connected set $K$ in $\C$ joining the circles of radius $\eta$ and $1$ about $0$,
the probability that a complex Brownian motion, starting from $0$,
leaves the unit disc without hitting $K$ is no greater than $A\sqrt{\eta}$.

Fix $n\le m$ with $\ve\nu\le cn$. Condition on $\cF_n$ and on $\O(n,\ve)$.
For all $z\in\C$ with $0\le\re(z)\le cn$, there exists
$w\in\tilde K_n$ such that \bb $|z-w|\le56\ve$.
Set $\b=56A^2e^2$ \eb  and $\nu_0=\lfloor \nu/(2\b)\rfloor$.
We assume without loss that $\nu_0\ge6$.
Define $R(k)=cn-\b\ve k$ and note that $R(2\nu_0)\ge0$.
Fix $k\in\{0,1,\dots,\nu_0-1\}$ and $z\in\ell_{R(k)}$ and consider a complex Brownian motion $B$ starting from $z$.
By the Beurling estimate, $B$ hits $\ell_{R(k+1)}$ without hitting $\tilde K_n$ with probability no greater than $\bb A\sqrt{56/\b}=e^{-1} \eb$.
\bg Then, by the strong Markov property, for all $z\in\ell_{R(0)}$, almost surely on $\O(n,\ve)$,
\begin{equation}\label{HMK}
\PP_z(B\text{ hits }\ell_{R(\nu_0)}\text{ before }\tilde K_n|\cF_n)\le e^{-\nu_0}\le e^{-\nu/(2\b)+1}.
\end{equation} \eg
There exists a family of disjoint open intervals $((\th_j,\th_j'):j=1,\dots,N_n)$ in $\R/(2\pi\Z)$
such that, for $w_j=\tilde\Phi_n(i\th_j)$ and $w_j'=\tilde\Phi_n(i\th_j')$,
we have $\re(w_j)=\re(w_j')=R(\nu_0)$ and $\bigcup_j(w_j,w_j')+2\pi i\Z$ disconnects $\tilde D_n\cap\ell_{R(2\nu_0)}$ from $\infty$ in $\tilde D_n$.
We choose the unique such family minimizing $\sum_j|w_j-w_j'|$.
Then $w_j\in\tilde P_{k(j)}$ for some $k(j)\le n$ for all $j$.
We shall show that the integers $k(1),\dots,k(N_n)$ must all be distinct, so $N_n\le n$.

\bg
Suppose $k(j)=k(j')=k_0+1$ for some distinct $j$ and $j'$. Then there exist $\a<\b<\a'<\b'<\a+2\pi$ such that,
for $z=\tilde\Phi_{k_0+1}(i\a)$, $z'=\tilde\Phi_{k_0+1}(i\a')$, $w=\tilde\Phi_{k_0+1}(i\b)$ and $w'=\tilde\Phi_{k_0+1}(i\b')$,
we have $z,z',w,w'\in\partial\tilde P_{k_0+1}$ and $\re(z)=\re(z')=R(2\nu_0)$ and $\re(w)=\re(w')=R(\nu_0)$. Then, since we are on $\O(m,\ve)$, we must have $c(k_0+1)+4\ve\ge R(\nu_0)$, so $ck_0\ge R(\nu_0)-4\ve-c\ge R(\nu_0+1)$. Hence there exists an interval $I$ of $\partial\tilde D_{k_0}$ with endpoints $p,p'$ in $\ell_{R(3\nu_0/2)}$ such that $z,z'$ are separated from $\partial\tilde D_{k_0}\sm I$ by $I\cup[p,p']$.
By a variation of the Beurling and strong Markov argument above, all but $e^{-\nu_0/2}$ of the harmonic measure
on $\partial\tilde D_{k_0}$ starting from $z$ is carried on $I$, and the same is true for $z'$.
Then, by conformal invariance of harmonic measure, all but $e^{-\nu_0/2+1}<1/4$ of the harmonic measure
on $i\R$ starting from $\tilde F_{k_0+1}(i\a)$ is carried on $\tilde\G_{k_0}(I)$, and the same is true for $\a'$.
So, by our harmonic measure condition, either more than $1/4$ of the harmonic measure on $i\R$ starting from $\tilde F_{k_0+1}(i\b)$ is carried
on $\tilde\G_{k_0}(I)$, or the analogous statement holds for $\b'$.
But, by the Beurling and strong Markov argument again, no more than $e^{-\nu_0/2+1}<1/4$ of the harmonic measure
on $\partial\tilde D_{k_0}$ starting from $w$ is carried on $I$, and the same is true for $w'$.
So, by conformal invariance, no more than $1/4$ of the harmonic measure on $i\R$ starting from $\tilde F_{k_0+1}(i\b)$ is carried
on $\tilde\G_{k_0}(I)$, and the same is true for $\b'$, a contradiction.
\er

Each path $(\tilde\Phi_n(i\th):\th\in(\th_j,\th_j'))$, together with the line segment $[w_j,w_j']$,
forms the boundary of a connected subset of $\tilde D_n$.
Denote by $S_n$ the union of these subsets.
Define $K^*_n=\{e^{\tilde\G_n(z)}:z\in S_n\}\cup K_0$ and $D^*_n=(\C\cup\{\infty\})\sm K^*_n$.
Then $D^*_n$ is a simply connected neighbourhood of $\infty$ in $D_0$, the set $K^*_n\sm K_0$ has $N_n$ connected components and, by (\ref{HMK}), the harmonic measure from $\infty$
of $K^*_n\sm K_0$ in $D_n^*$ is no greater than $e^{-\nu/(2\b)+1}$.
So,  \bb on $\O(n,\ve)$, \eb
$$
\PP((e^{i\Th_{n+1}}P)\cap K_n^*\not=\es|\cF_n)\le C_1N_ne^{-\nu/(4\b)+1/2},
$$
\bb where $C_1$ is the absolute constant from Lemma \ref{NOT}. \eb
But, if $e^{i\Th_{n+1}}P$ does not meet $K_n^*$, then $\re(z)>cn-\nu\ve$ for all $z\in\tilde P_{n+1}$.
Of course this inequality holds also in the case where $cn<\nu\ve$.

It remains to deal with the case where $n\ge m+1$. We may assume that $\nu\ge2\b\log(16\pi e/\d)$ or the
estimate is trivial. Then, for $\mu=e^{-\nu/(2\b)+1}$, we have $16\pi\mu\le\d$.
So we can apply Lemma \ref{NOT} with $K^*=K^*_m$
to obtain, \bb on $\O(m,\ve)$, \eb
$$
\PP(\re(z)\le cm-\ve\nu\text{ for some }z\in\tilde P_{n+1}\text{ and some }n\ge m|\cF_m)\le C_1N_me^{-\nu/(4\b)+1/2}/\d.
$$
The estimates we have obtained combine to prove the proposition.
\end{proof}

\bb
\begin{rem}
An analogous result to Proposition \ref{NOTLATE} can be obtained by bounding the contribution to the length of the cluster boundary made by each particle. This extends the class of allowable basic particles beyond that specified by \eqref{HMCO}, but at the expense of a weaker bound on the probability. 

Suppose that \eqref{D13} and \eqref{D14} hold and that, in addition, $\partial P$ is rectifiable, with length $L$, and is given by $\beta:[0,L] \to \partial P$, where the parametrization is by arc length. We assume further that $\beta$ is piecewise differentiable in such a way that there exist $C(\d)\in (0,\infty)$, $k(\d)\in\N$ and  $0=a_0<a_1<\cdots<a_{k(\d)}=L$ such that $r_i:(a_i, a_{i+1}) \to (1,1+\d)$ given by $r_i(t)=|\beta(t)|$ is differentiable with $|r_i'(t)|>C(\d)$ on $(a_i, a_{i+1})$ for all $i=0, \dots, k(\d)-1$. Set $r(\d)=k(\d)/C(\d)$. Let $\tilde \ell_{n+1}$ be the contribution to the length of the boundary of $\partial \tilde K_{n+1}$ that comes from particle $\tilde P_{n+1}$. Then
$$
\tilde \ell_{n+1} = \int_0^L \frac{|\Phi_n'(\beta(t)e^{i \Theta_{n+1}})|}{|\Phi_n(\beta(t)e^{i \Theta_{n+1}})|} dt.
$$
So, by a similar argument to that in the proof of Theorem 4 of \cite{RZ},
\begin{eqnarray*}
\E(\tilde \ell_{n+1} | K_n ) &=& \sum_{i=0}^{k(\d)-1} \int_0^{2 \pi} \int_{a_i}^{a_{i+1}} \frac{|\Phi_n'(r_i(t)e^{i \theta})|}{|\Phi_n(r_i(t)e^{i \theta})|} dt d\theta \\
&\le& r(\d) \int_0^{2 \pi} \int_1^{1+\d} \frac{|\Phi_n'(re^{i \theta})|}{|\Phi_n(re^{i \theta})|} dr d\theta \\
&\le& C_1 r(\d) (cn\d)^{1/2},
\end{eqnarray*}
for some absolute constant $C_1<\infty$. Therefore, if $N_n$ is defined as in the proof of Proposition \ref{NOTLATE}, for all $\zeta>0$, $\PP(N_n > \zeta ) \le \sum_{j=1}^n \PP(\tilde \ell_j > \zeta \nu \ve n^{-1}) \le C_1 r(\d) \d^{3/2}n^{5/2}/(\zeta \nu \ve)$. Hence, there exists some absolute constant $C<\infty$ such that
$$
\PP(\O(m,\ve)\sm\O(m,\ve,\nu))\le C\zeta (m+\d^{-1}) e^{-\nu/C} + Cr(\d)\d^{3/2}m^{7/2}/(\zeta \nu \ve ).
$$
On optimizing over $\zeta$, it can be shown that there exists another absolute constant $C<\infty$ such that
$$
\PP(\O(m,\ve)\sm\O(m,\ve,\nu))\le C r(\d)^{1/2} m^{7/4}\d^{3/4}\nu^{-1/2}\ve^{-1/2}(m+\d^{-1})^{1/2}e^{-\nu/C}. 
$$
\end{rem}
\eb

Define for $z\in\tilde D_0$
$$
N(z)=\inf\{n\ge0:z\not\in\tilde D_n\}.
$$
Denote by $\O(m,\ve,\nu,\eta)$ the subset of $\O(m,\ve,\nu)$ defined by the following condition:
for all $z\in\tilde D_0\cap\tilde K_\infty$ with $N(z)\le m$ and all $n\le N(z)-1$, we have
$$
|\im(\tilde\G_n(z)-z)|<\ve+2\eta.
$$
\bg
In conjunction with Propositions \ref{FLUD} and \ref{NOTLATE}, the following estimate implies that, when $m=\lfloor\d^{-6}\rfloor$ and $\ve=\d^{2/3}\log(1/\d)$
and $\nu=(\log(1/\d))^2$ and $\eta=\d^{2/3}(\log(1/\d))^6$, the event $\O(m,\ve,\nu,\eta)$ has high probability as $\d\to0$.
\eg

\begin{proposition}\label{OMNE}
There is an absolute constant $C<\infty$ such that, for all \bb $\ve \in [2 \d, 1/6]$, $\nu \in [0, \infty)$ and \eb $\eta\in(0,\infty)$,
$$
\PP\left(\O(m,\ve,\nu)\sm\O(m,\ve,\nu,\eta)\right)\le\frac{Cm}{\eta}\exp\left\{-\frac\eta{C\d}+\frac{C\nu\ve\d}c\left(1+\log\left(1/\d\right)\right)\right\}.
$$
\end{proposition}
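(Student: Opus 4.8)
The plan is to control $\im(\tilde\G_j(z)-z)$ for one starting point $z$ at a time, exploiting that $(\tilde\G_j(z):j\ge0)$ is a Markov chain and that $\tilde G$ barely moves the imaginary coordinate, and then to make this uniform in $z$ by discretisation. Write $z_j=\tilde\G_j(z)$. Since $\tilde\G_0=\id$ and $z_{j+1}-z_j=\tilde G_0(z_j-i\Th_{j+1})$ with $\tilde G_0=\tilde G-\id$, we have $\im(\tilde\G_j(z)-z)=\sum_{l=0}^{j-1}\D_l$ with $\D_l=\im\tilde G_0(z_l-i\Th_{l+1})=g_{\re(z_l)}(\im(z_l)-\Th_{l+1})$ (read periodically; put $\D_l=0$ in the event of absorption at step $l+1$), and for $j\le N(z)-1$ only interior increments appear. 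Conditionally on $\cF_l$ the angle $\Th_{l+1}$ is uniform, so $\E(\D_l\mid\cF_l)=\tfrac1{2\pi}\int_{\{\re(z_l)+i\th\in\tilde D\}}\im\tilde G_0(\re(z_l)+i\th)\,d\th$; this vanishes when $\re(z_l)$ exceeds the real extent of the slit, since the mean of $\tilde G_0$ over a complete vertical line is the real number $-c$, and in general, by $|g_x(\th)|\le\a^2/(|\th|\vee\a)$ from Proposition \ref{TGES} with $\a=C\d$, it has modulus at most $C\d^2\log(1/\d)$. The same proposition gives $|\D_l|\le C\d$ and, via $\tfrac1{2\pi}\int_{-\pi}^\pi g_x^2\le C\d^3$, the bound $\E(\D_l^2\mid\cF_l)\le C\d^3$ whenever $\re(z_l)\in(0,1]$.

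For a fixed $z$ with $N(z)\le m$ I would split the trajectory at $n_0=\lfloor(\re(z)-4\ve)^+/c\rfloor$. For $j\le n_0$ the inequality $\re(z)\ge cj+4\ve$ lets Proposition \ref{FLUD} give $|\im(\tilde\G_j(z)-z)|<\ve$ on $\O(m,\ve)$, and also $\re(z_{n_0})<6\ve\le1$ (using $c\le\ve$, $\ve\le1/6$); since $\re\tilde G_0=\log|G(\zeta)/\zeta|<0$ on $D$ (as in the proof of Proposition \ref{GZE}), the $\re(z_l)$ decrease, so the increment bounds above hold for all $l\ge n_0$. On $\O(m,\ve,\nu)$ one has $z\in\tilde P_{N(z)}$ with $c(N(z)-1)<\re(z)+\ve\nu$, whence the number of tail steps is at most $K\le C\ve(1+\nu)/c$. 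Centring the increments and stopping the resulting martingale after a fixed number $\asymp\ve\nu/c$ of steps, so that its quadratic variation is bounded deterministically by $C\ve\nu\d^3/c$ while the accumulated drift is at most $C\ve\nu\d^2\log(1/\d)/c$, Freedman's inequality bounds $\PP\big(\O(m,\ve,\nu)\cap\{\sup_{j\le N(z)-1}|\im(\tilde\G_j(z)-z)|\ge\ve+2\eta\}\big)$ by $2\exp\{-a\eta^2/(\ve\nu\d^3/c+\d\eta)\}$ for an absolute constant $a>0$. The elementary inequality $\eta^2/(\ve\nu\d^3/c+\d\eta)\ge\eta/(C\d)-C\ve\nu\d/c$ — which comes out of $\tfrac\eta\d\cdot\tfrac{\ve\nu\d^2}{c\eta}=\tfrac{\ve\nu\d}c$ — then yields the asserted exponent, the factor $1+\log(1/\d)$ providing the slack to absorb the drift and the additive $\d$, and to trivialise the parameter ranges in which these exceed $\eta$.

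It remains to make the bound uniform over $z\in\tilde D_0\cap\tilde K_\infty$ with $N(z)\le m$. Only points with $\re(z)<c(N(z)-1)+4\ve<cm+4\ve$ can fail the required inequality — the rest are already covered by Proposition \ref{FLUD} — and these lie in the at most $m$ particles $\tilde P_1,\dots,\tilde P_m$. I would cover the relevant set by a net of mesh $\asymp\eta$, apply the single-point bound together with a union bound, and control the oscillation of $z\mapsto\sup_j\im(\tilde\G_j(z)-z)$ between net points by Kolmogorov's H\"older criterion, just as local uniformity is obtained in Proposition \ref{FLUD}; this contributes the prefactor $Cm/\eta$. The hard part will be precisely this last phase of a point's trajectory, where $\re(z_l)$ has dropped to order $\d$ or below: individual steps can then be of size $\asymp\d$ and there may be $\asymp\ve\nu/c$ of them, so one genuinely needs both the $L^2$-cancellation from Proposition \ref{TGES} and the a priori step-count control built into $\O(m,\ve,\nu)$ — while the small systematic drift of $\im$ caused by the slit is what forces the logarithmic correction in the exponent.
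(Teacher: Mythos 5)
Your single-point estimate is a workable alternative to the paper's: the paper does not centre the increments and apply Freedman's inequality, but instead dominates $\im(\tilde G(z))\le g^*(\im(z))$ pointwise, where $g^*(\th)=\th+g_0^*(\th)$ with $g_0^*(\th)=\a^2/(|\th|\vee\a)\ge0$, $\a=C\d$, and runs a plain exponential Chebyshev bound $\E(e^{Y_1/\a})\le\exp\{(\a e/\pi)(1+\log(\pi/\a))\}$ on the resulting non-decreasing comparison chain, whose increments are i.i.d.\ because $g_0^*$ is periodic and $\Th_{n+1}$ is uniform. Your moment computations (drift $O(\d^2\log(1/\d))$, conditional variance $O(\d^3)$, increments $O(\d)$, at most $O(\nu\ve/c)$ tail steps) and the arithmetic turning Freedman's exponent into $-\eta/(C\d)+C\nu\ve\d(1+\log(1/\d))/c$ are correct, so up to this point you have a legitimately different route to the same bound for a fixed $z$.

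The genuine gap is the uniformity step. You propose to put an $\eta$-net on the starting points and control the oscillation between net points by Kolmogorov's H\"older criterion ``just as in Proposition \ref{FLUD}''. That argument cannot be transported here: in Proposition \ref{FLUD} the difference $M_n(z)-M_n(z')$ is controlled through $|\tilde G_0'|\le 2C_1c/q(\re(\cdot)-\d)$, which is only useful while the trajectories stay at real part $\gtrsim\ve$ away from the particle. In the final phase you yourself identify as the hard one, $\re(\tilde\G_j(z))$ has dropped to order $\d$ and below, $\tilde G_0$ is not Lipschitz near $\tilde P$ (at a slit tip it has a square-root singularity), and the absorption time $N(z)$ is violently discontinuous in $z$ — two arbitrarily close points can be swallowed at different times and sent to angles separated by the full harmonic-measure width of a particle. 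So the moment bounds on $\sup_j|\im(\tilde\G_j(z)-z)-\im(\tilde\G_j(z')-z')|$ needed for Kolmogorov's criterion are simply not available in this regime. The paper's device for exactly this difficulty is that the dominating chain $Y_n^{(n_0,y_0)}$ is non-decreasing in its initial value $y_0$ (since $g^*$ is non-decreasing), so a one-dimensional grid of mesh $h\le\eta$ in the starting value at each time $n_0\in\{0,\dots,m\}$ — giving the $Cm/\eta$ prefactor — controls \emph{every} trajectory by sandwiching, with no continuity estimate at all. To repair your proof you should replace the Kolmogorov step by such a monotone domination; doing so also collapses your Freedman argument to the simpler Chernoff bound, since the dominating increments become i.i.d.
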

\begin{proof}
Fix $z\in\tilde D_0\cap\tilde K_\infty$ with $N(z)\le m$.
Write $N_0(z)$ for \bb the maximum of 0 and \eb the largest integer such that $cN_0(z)\le\re(z)-4\ve$.
Write $N_1$ for the smallest integer such that $cN_1\ge (\nu+4)\ve$.
Then, on $\O(m,\ve,\nu)$, we have $N(z)-1\le N_0(z)+N_1$ and, since $\O(m,\ve,\nu)\sse\O(m,\ve)$, we have also
$|\im(\tilde\G_k(z)-z)|<\ve$ for all $k\le N_0(z)$.

We showed in Proposition \ref{TGES} that, for some absolute constant \bb $C_1<\infty$, for $\a=C_1\d$ \eb and for all $z\in\tilde D_0$ with $\re(z)\le1$,
$$
\im(\bb \tilde G \eb(z))\le g^*(\im(z))
$$
where $g^*(\th)=\th+g_0^*(\th)$ and $g_0^*$ is the $2\pi$-periodic function given by
$$
g_0^*(\th)=\frac{\a^2}{|\th|\vee\a},\q \th\in(-\pi,\pi].
$$
Then, for $N_0(z)\le n\le N(z)-1$,
$$
\im(\tilde\G_n(z))\le Y_n^{(n_0,y_0)}
$$
where $n_0=N_0(z)$, $y_0=\im(\tilde\G_{n_0}(z))$ and where, recursively for $n\ge n_0$, $Y_n=Y_n^{(n_0,y_0)}$ is defined by
$$
Y_{n_0}=y_0,\q Y_{n+1}=g^*(Y_n-\Th_{n+1})+\Th_{n+1}=g_0^*(Y_n-\Th_{n+1})+Y_n.
$$
Note that \bb $\re(\G_n(z)) \le \re(\G_{n_0}(z))\le |\re(\G_{n_0})-\re(z)+cn_0|+\re(z)-cn_0  < 5\ve+c <1$. Hence, \eb  $g_0^*$ is non-negative and $g^*$ is non-decreasing, so $Y_n^{(n_0,y_0)}$ is non-decreasing in $n$ and $y_0$.

Set $M=\lceil 2\pi/\eta\rceil$ and $h=2\pi/M$ so that $h\le\eta$.
Consider the set of time-space starting points
$$
E=\{(n_0,jh):n_0\in\{0,1,\dots,m\},j\in\{0,1,\dots,M-1\}\}
$$
and the event
$$
\O_0=\{Y_{n_0+N_1}^{(n_0,jh)}\le jh+\eta\text{ for all }(n_0,jh)\in E\}.
$$
Note that $|E|\le Cm/\eta$, so
$$
\PP(\O\sm\O_0)\le Cm\PP(Y_{N_1}>\eta)/\eta
$$
where $Y=Y^{(0,0)}$.
Now
$$
\E(e^{Y_1/\a})=\frac1{2\pi}\int_{-\pi}^\pi e^{g_0^*(\th)/\a}d\th=1+\frac{\a(e-1)}\pi+\frac1\pi\int_\a^\pi(e^{\a/\th}-1)d\th
\le\exp\{(\a e/\pi)(1+\log(\pi/\a))\}
$$
so
$$
\PP(Y_{N_1}>\eta)\le\exp\{-\eta/\a+(N_1\a e/\pi)(1+\log(\pi/\a))\}.
$$

Choose $j\in\{1,\dots,M\}$ so that $(j-1)h\le y_0\le jh$. Then, for $n\le N(z)-1$, on $\O_0$,
$$
\im(\tilde\G_n(z))\le Y_n^{(n_0,jh)}\le jh+\eta\le\im(\tilde\G_{n_0}(z))+2\eta\le\im(z)+\ve+2\eta.
$$
A similar argument allows us to bound the downward variation of $\im(\tilde\G_n(z))$ up to $N(z)-1$.
Hence $\PP(\O(m,\ve,\nu)\sm\O(m,\ve,\nu,\eta)\le2\PP(\O\sm\O_0)$ which gives the claimed estimate.
\end{proof}

\bg
\begin{theorem}\label{BALLG}
Assume that the basic particle $P$ satisfies conditions (\ref{D13}),(\ref{D14}) and (\ref{HMCO}).
Consider for $\ve_0\in(0,1]$ and $m\in\N$ the event $\O[m,\ve_0]$ specified by the following conditions:
for all $n\le m$ and all $n'\ge m+1$,
$$
|z-e^{cn+i\Th_n}|\le\ve_0 e^{cn}\q\text{for all $z\in P_n$}
$$
and
$$
\dist(w,K_n)\le\ve_0 e^{cn}\q\text{whenever $|w|\le e^{cn}$}
$$
and
$$
|z|\ge(1-\ve_0)e^{cm}\q\text{for all $z\in P_{n'}$}.
$$
Assume that $\ve_0=\d^{2/3}(\log(1/\d))^8$ and $m=\lfloor\d^{-6}\rfloor$. Then
$\PP(\O[m,\ve_0])\to1$ uniformly in $P$ as $\d\to0$.
\end{theorem}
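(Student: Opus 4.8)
The plan is to run everything through the event $\O(m,\ve,\nu,\eta)$ of Proposition \ref{OMNE}, with the choices $\ve=\d^{2/3}\log(1/\d)$, $\nu=(\log(1/\d))^2$ and $\eta=\d^{2/3}(\log(1/\d))^6$, and to prove two things: that $\PP(\O(m,\ve,\nu,\eta))\to1$ uniformly in $P$ as $\d\to0$, and that $\O(m,\ve,\nu,\eta)\sse\O[m,\ve_0]$ for all sufficiently small $\d$. For the first, combine the inclusions $\O(m,\ve,\nu,\eta)\sse\O(m,\ve,\nu)\sse\O(m,\ve)$ with Propositions \ref{FLUD}, \ref{NOTLATE} and \ref{OMNE}. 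With $m=\lfloor\d^{-6}\rfloor$ and the above choices, and using $\d^2/6\le c\le3\d^2/4$ from Corollary \ref{GZEE} (which also gives $\ve\in[2\d,1/6]$ and $\ve^3/c\ge\tfrac43(\log(1/\d))^3$ for small $\d$), each of $\PP(\O\sm\O(m,\ve))$, $\PP(\O(m,\ve)\sm\O(m,\ve,\nu))$ and $\PP(\O(m,\ve,\nu)\sm\O(m,\ve,\nu,\eta))$ is bounded by a fixed power of $\d^{-1}$ times $\exp\{-(\log(1/\d))^k/C\}$ for some $k\ge1$, hence tends to $0$; and since all constants there are absolute and conditions (\ref{D14}), (\ref{HMCO}) enter the proofs only qualitatively, the convergence is uniform in $P$.

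The rest is the deterministic inclusion $\O(m,\ve,\nu,\eta)\sse\O[m,\ve_0]$ for small $\d$, checked by verifying the three defining conditions of $\O[m,\ve_0]$ and passing from logarithmic to standard coordinates using $|e^{z}-e^{z'}|\le|z-z'|e^{\max(\re z,\re z')}$. For the filling condition, (\ref{NBH}) says that on $\O(m,\ve)$ every point $R+iy$ with $R\le cn$ lies within $56\ve$ of $\tilde K_n$, with the approximating point of real part at most $cn+56\ve$; exponentiating gives $\dist(w,K_n)\le56\ve e^{56\ve}e^{cn}\le\ve_0 e^{cn}$ whenever $|w|\le e^{cn}$, since $57\ve\le\ve_0$ for small $\d$. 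For the not-too-late condition, Proposition \ref{NOTLATE} gives $\re(z)>cm-\ve\nu$ for $z\in\tilde P_{n'}$ with $n'\ge m+1$, so $|z'|\ge e^{cm-\ve\nu}\ge(1-\ve_0)e^{cm}$ for $z'\in P_{n'}$, using $\ve\nu\le\ve_0$.

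The substantive case is the particle-location condition $|z-e^{cn+i\Th_n}|\le\ve_0 e^{cn}$ for $z\in P_n$, $n\le m$, which requires controlling $\tilde P_n$ in all three directions. The inward bound $\re(w)>c(n-1)-\ve\nu$ for $w\in\tilde P_n$ is Proposition \ref{NOTLATE}; the outward bound $\re(w)\le cn+7\ve$ is the first consequence of $\O(m,\ve)$ recorded after Proposition \ref{FLUD}, which via the maximum principle confines $K_n$ to the disc of radius $(e^{5\ve}+\ve e^{6\ve})e^{cn}$; since $c\le3\d^2/4\ll\ve\nu$, these give $|\re(w)-cn|\le2\ve\nu$. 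The angular direction cannot be handled by the same approximation, since $\tilde P_n=\tilde\Phi_{n-1}(\tilde P+i\Th_n)$ and $\tilde P$ has real part at most $\log(1+\d)\le\d$, well to the left of $\ell_{5\ve}$. Instead, for $w$ in the set $\tilde P_n\cap\tilde D_{n-1}$, which is dense in $\tilde P_n$ because under (\ref{D14}) the particle touches $K_0$ only at $1$, one has $N(w)=n\le m$ and $w\in\tilde D_0\cap\tilde K_\infty$, so the bound defining $\O(m,\ve,\nu,\eta)$ at level $n-1$ gives $|\im(\tilde\G_{n-1}(w)-w)|<\ve+2\eta$; and $\tilde\G_{n-1}(w)\in\tilde P+i\Th_n$, while (\ref{D13}) and (\ref{D14}) force $|\im(v)-\Th_n|\le\d$ for $v\in\tilde P+i\Th_n$, so $|\im(w)-\Th_n|<\ve+2\eta+\d$; by continuity of $\im$ the same holds on all of $\tilde P_n$. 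Combining, $|w-(cn+i\Th_n)|\le2\ve\nu+\ve+2\eta+\d\le5\eta$ for small $\d$, so $|e^{w}-e^{cn+i\Th_n}|\le5\eta e^{2\ve\nu}e^{cn}\le6\eta e^{cn}\le\ve_0 e^{cn}$, since $6\eta=6\d^{2/3}(\log(1/\d))^6\le\d^{2/3}(\log(1/\d))^8=\ve_0$ for small $\d$.

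I expect the main obstacle to be precisely this last assembly: simultaneously using the inward bound (Proposition \ref{NOTLATE}), the outward maximum-principle bound, and the angular bound (Proposition \ref{OMNE}, applied after inverting through $\tilde\G_{n-1}$), together with the density-and-continuity step needed to pass from interior points of the distorted particle $\tilde P_n$ to all of it. Everything else is bookkeeping with powers of $\log(1/\d)$, which goes through comfortably because the stated power $8$ exceeds the largest power that actually arises, the power $6$ coming from $\eta$.
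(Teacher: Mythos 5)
Your proposal is correct and follows essentially the same route as the paper: choose $\ve=\d^{2/3}\log(1/\d)$, $\nu=(\log(1/\d))^2$, $\eta=\d^{2/3}(\log(1/\d))^6$, combine Propositions \ref{FLUD}, \ref{NOTLATE} and \ref{OMNE} to make $\O(m,\ve,\nu,\eta)$ asymptotically certain, and then verify the three defining conditions of $\O[m,\ve_0]$ deterministically on that event, using (\ref{NBH}) for the filling condition and the radial/angular bounds from $\O(m,\ve)$, $\O(m,\ve,\nu)$ and $\O(m,\ve,\nu,\eta)$ together with $\tilde\G_{n-1}(z)\in\tilde P+i\Th_n$ for the particle-location condition. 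The only (harmless) deviations are that the paper reads the outward bound $\re(z)<cn+4\ve$ directly off the definition of $\O(m,\ve)$ rather than via the maximum-principle corollary, and that your density-and-continuity step is not actually needed since $\tilde P_n=\tilde\Phi_{n-1}(\tilde P+i\Th_n)\sse\tilde D_{n-1}$ by construction, so $N(z)=n$ for every $z\in\tilde P_n$.
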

\begin{proof}
Set $\ve=\d^{2/3}\log(1/\d)$ and $\nu=(\log(1/\d))^2$ and $\eta=\d^{2/3}(\log(1/\d))^6$.
We have shown that the event $\O(m,\ve,\nu,\eta)$ has high probability as $\d\to0$.
We complete the proof by showing that, for $\d$ sufficiently small, the defining conditions
for $\O[m,\ve_0]$ are all satisfied on $\O(m,\ve,\nu,\eta)$.

Fix $n\le m$ and $z\in\tilde P_n$.
On $\O(m,\ve)$ we have $\re(z)<cn+4\ve$ and, restricting to $\O(m,\ve,\nu)$, we have also $\re(z)>c(n-1)-\nu\ve$.
Restricting further to $\O(m,\ve,\nu,\eta)$, we have $|\im(\tilde\G_{n-1}(z)-z)|<\ve+2\eta$.
But $\tilde\G_{n-1}(z)\in\tilde P+2\pi i\Th_n$, so $|\tilde\G_{n-1}(z)-2\pi i\Th_n|\le\d$.
Hence, on $\O(m,\ve,\nu,\eta)$, we have (since $\nu\ge4$)
$$
|e^z-e^{cn+2\pi i\Th_n}|\le e^{cn}(e^{\nu\ve+c}-1)+e^{cn+\nu\ve+c}(\ve+2\eta+\d).
$$
We can choose $\d$ sufficiently small that
$$
(e^{\nu\ve+c}-1)+e^{\nu\ve+c}(\ve+2\eta+\d)\le\ve_0.
$$
Then on $\O(m,\ve,\nu,\eta)$ we have, for all $z\in P_n$,
$$
|z-e^{cn+2\pi i\Th_n}|\le \ve_0e^{cn}.
$$

Next, using (\ref{NBH}), for $0\le\re(w)\le cn$, on $\O(m,\ve)$, there exists $z\in\tilde K_n$ with $|z-w|\le 56\ve$.
Then $e^z\in K_n$ and
$$
|e^z-e^w|\le 56\ve e^{cn+4\ve}.
$$
We can choose $\d$ sufficiently small that
$$
56\ve e^{4\ve}\le\ve_0.
$$
Then $\dist(w,K_n)\le\ve_0e^{cn}$ whenever $|w|\le e^{cn}$ and $n\le m$.

Finally, for $n\ge m+1$ and $z\in\tilde P_n$, on $\O(m,\ve,\nu)$, we have $\re(z)>cm-\nu\ve$.
Hence $|w|>e^{cm-\nu\ve}\ge(1-\ve_0)e^{cm}$ for all $w\in P_n$.
\end{proof}
\eg

\section{Weak convergence of the localized disturbance flow to the coalescing Brownian flow}\label{WC}
We review in this section the main results of \cite{NT1}.
Denote by $\bar\cD$ the set of all pairs $f=\{f^-,f^+\}$,
where $f^+$ is a right-continuous, non-decreasing function on $\R$
and where $f^-$ is the left-continuous modification of $f^+$.
Denote by $\cD$ the subset of those $f\in\bar\cD$ such that $x\mapsto f^+(x)-x$ is periodic of period $2\pi$.
Write $\id$ for the identity function $\id(x)=x$ and, for $f\in\bar\cD$, write $f_0^\pm$ for the periodic functions $f^\pm-\id$.
Denote by $\cD^*$ the subset of $\cD$ where $f_0$ is not identically zero but has zero mean
$$
\frac1{2\pi}\int_0^{2\pi}f_0(x)dx=0.
$$
Here and below, we drop the $\pm$ where the quantity computed takes the same value for both versions.
Fix $f\in\cD^*$ and define $\rho=\rho(f)\in(0,\infty)$ by
$$
\frac\rho{2\pi}\int_0^{2\pi}f_0(x)^2dx=1.
$$
Let $(\Th_n:n\in\Z)$ be a sequence of independent random variables, all uniformly distributed on $[0,2\pi)$.
Define for each non-empty bounded interval $I\sse\R$ a pair of random functions $\Phi_I=\{\Phi_I^-,\Phi_I^+\}$ by
$$
\Phi_I^\pm=f_{\Th_n}^\pm\circ\dots\circ f_{\Th_m}^\pm
$$
where $f_\th^\pm(x)=f^\pm(x-\th)+\th$ and where $m$ and $n$ are, respectively, the smallest and largest integers in the rescaled interval $\rho I$.
If $\rho I\cap\Z=\es$ then $\Phi_I=\id$.
Write $I=I_1\oplus I_2$ if $I_1, I_2$ are disjoint intervals with $\sup I_1=\inf I_2$ and $I=I_1\cup I_2$.
Note that the family $\Phi=(\Phi_I:I\sse\R)$ has the following {\em flow property}
\begin{equation}\label{FP}
\Phi_{I_2}^\pm\circ \Phi_{I_1}^\pm=\Phi_I^\pm,\q\text{whenever }I=I_1\oplus I_2.
\end{equation}
Moreover (see \cite{NT1}), almost surely, for all $I$, $\Phi_I^-$ is the left-continuous modification of $\Phi_I^+$,
so $\Phi_I=\{\Phi_I^-,\Phi_I^+\}\in\cD$.
We call $\Phi$ the {\em disturbance flow with disturbance $f$.}
For $\ve\in(0,1]$, we make the diffusive rescaling
$$
\Phi_I^{\ve,\pm}(x)=\ve^{-1}\Phi_{\ve^2I}^\pm(\ve x),\q x\in\R
$$
and call $(\Phi^\ve_I:I\sse\R)$ the {\em $\ve$-scale disturbance flow with disturbance $f$.}

In order to formulate a weak convergence result about these disturbance flows, we introduce metrics on $\cD$ and $\bar\cD$
and then we define certain metric spaces which will serve as state-spaces for $\Phi$ and $\Phi^\ve$.
First, define for $f,g\in\cD$
$$
d_\cD(f,g)=\inf\{\ve\ge0:f^+(x)\le g^+(x+\ve)+\ve\text{ and }g^+(x)\le f^+(x+\ve)+\ve\text{ for all $x\in\R$}\}.
$$
For $f,g\in\bar\cD$, define
$$
d_{\bar\cD}(f,g)=\sum_{n=1}^\infty2^{-n}(d_n(f,g)\wedge1)
$$
where
$$
d_n(f,g)=\inf\{\ve\ge0:f^+(x)\le g^+(x+\ve)+\ve\text{ and }g^+(x)\le f^+(x+\ve)+\ve\text{ for all $x\in[-n,n-\ve]$}\}.
$$
Then $d_\cD$ is a metric on $\cD$ and the metric space $(\cD,d_\cD)$ is complete.
In fact $(\cD,d_\cD)$ is isometric to the set of periodic contractions on $\R$ with period $2\pi$, with
supremum metric, by drawing new axes for the graph of $f\in\cD$ at a rotation of $\pi/4$.
Also, $d_{\bar\cD}$ is a metric on $\bar\cD$ and the metric space $(\bar\cD,d_{\bar\cD})$ is complete.
See \cite{NT1}.

Consider now a family $\phi=(\phi_I:I\sse\R)$, where $\phi_I\in\cD$ and $I$ ranges over all non-empty bounded intervals.
Say that $\phi$ is a {\em weak flow} if,
\begin{equation}\label{WF}
\phi_{I_2}^-\circ\phi_{I_1}^-\le\phi_I^-\le\phi_I^+\le\phi_{I_2}^+\circ\phi_{I_1}^+,
\q\text{whenever }I=I_1\oplus I_2.
\end{equation}
Say that $\phi$ is {\em cadlag} if, for all $t\in\R$,
$$
d_\cD(\phi_{(s,t)},\id)\to0\q\text{as}\q s\uparrow t\q\q\text{and}\q\q
d_\cD(\phi_{(t,u)},\id)\to0\q\text{as}\q u\downarrow t.
$$
We write $D^\circ(\R,\cD)$ for the set of all cadlag weak flows.
For the disturbance flow $\Phi$, almost surely, for all $t\in\R$,
for all sufficiently small $\ve>0$, we have $\Phi_{(t-\ve,t)}=\Phi_{(t,t+\ve)}=\id$.
So $\Phi$ takes values in $D^\circ(\R,\cD)$.
Define similarly $D^\circ(\R,\bar\cD)$ and note that $\Phi^\ve$ takes values in $D^\circ(\R,\bar\cD)$.

Fix $\phi\in D^\circ(\R,\cD)$ and suppose that $\phi_{\{t\}}=\id$ for all $t\in\R$.
Then $\phi_{(s,t)}=\phi_{(s,t]}=\phi_{[s,t)}=\phi_{[s,t]}$ for all $s,t\in\R$ with $s<t$.
Denote all these functions by $\phi_{ts}$ and set $\phi_{tt}=\id$ for all $t\in\R$.
The map
$$
(s,t)\mapsto\phi_{ts}:\{(s,t)\in\R^2:s\le t\}\to\cD
$$
is then continuous.
We write $C^\circ(\R,\cD)$ for the set of such {\em continuous weak flows} $\phi$
and we write $C^\circ(\R,\bar\cD)$ for the analogous subset in $D^\circ(\R,\bar\cD)$.

We can and do make $D^\circ(\R,\cD)$ and $D^\circ(\R,\bar\cD)$ into complete
separable metric spaces by the choice of Skorokhod-type metrics, both denoted $d_D$.
The metrics $d_D$ have the following two further properties.
The associated Borel $\s$-algebras coincide with those generated by the evaluation maps $\phi\mapsto\phi_I^+(x)$ as $x$ ranges over $\R$ and
$I$ ranges over bounded intervals in $\R$.
Moreover, for any sequence $(\phi^n:n\in\N)$ in $D^\circ(\R,\cD)$ and any $\phi\in C^\circ(\R,\cD)$, we have $d_D(\phi^n,\phi)\to0$ if and only if
$d_\cD(\phi_I^n,\phi_I)\to0$ uniformly over subintervals $I$ of compact sets in $\R$.
In particular, $C^\circ(\R,\cD)$ is closed in $D^\circ(\R,\cD)$.
Analogous statements hold in the non-periodic case.
However, the flow property (\ref{FP}) is not preserved under limits in $d_D$.
We refer to \cite{NT1} for the specification of $d_D$.

The disturbance flow $\Phi$ with disturbance $f$ is then a $D^\circ(\R,\cD)$-valued random variable,
and the law of $\Phi$ is a Borel probability measure on $D^\circ(\R,\cD)$, which we denote by $\mu_A^f$.
The $\ve$-scale disturbance flow $\Phi^\ve$ is a $D^\circ(\R,\bar\cD)$-valued random variable,
so the law of $\Phi^\ve$ is a Borel probability measure on $D^\circ(\R,\bar\cD)$, which we denote by $\mu_A^{f,\ve}$.

For $e=(s,x)\in\R^2$ and $\phi\in D^\circ(\R,\cD)$, the maps
$$
t\mapsto\phi_{(s,t]}^-(x):[s,\infty)\to\R,\q
t\mapsto\phi_{(s,t]}^+(x):[s,\infty)\to\R
$$
are cadlag. Hence we obtain a measurable maps
$Z^e=Z^{e,+}$ and $Z^{e,-}$ on $D^\circ(\R,\cD)$ with values in $D_e=D_x([s,\infty),\R)$ by setting
$$
Z^{e,\pm}(\phi)=(\phi_{(s,t]}^\pm(x):t\ge s).
$$
The restrictions of $Z^{e,\pm}$ to $C^\circ(\R,\cD)$ then take values in $C_e=C_x([s,\infty),\R)$.
We define a filtration $(\cF_t)_{t\ge0}$ on $D^\circ(\R,\cD)$ by
$$
\cF_t=\s(Z^e_r:e=(s,x)\in\R^2,r\in(-\infty,t]\cap[s,\infty))
$$
and, for $e=(s,x),e'=(s',x')\in\R^2$, we write $T^{ee'}$ for the collision time
$$
T^{ee'}=\inf\{t\ge s\vee s':Z^e_t-Z^{e'}_t\in2\pi\Z\}.
$$
We make the same definitions for $\phi\in D^\circ(\R,\bar\cD)$, except to define as collision time
$$
\bar T^{ee'}=\inf\{t\ge s\vee s':Z^e_t=Z^{e'}_t\}.
$$

The space $C^\circ(\R,\cD)$ is a convenient state-space for the coalescing Brownian flow on the circle
where it has the following characterization (see \cite[Theorem 6.1]{NT1}).
{\em There exists a unique Borel probability measure $\mu_A$ on $C^\circ(\R,\cD)$
such that, for all $e=(s,x),e'=(s',x')\in\R^2$, the processes $(Z^e_t)_{t\ge s}$ and $(Z^e_tZ^{e'}_t-(t-T^{ee'})^+)_{t\ge s\vee s'}$ are
continuous local martingales in the filtration $(\cF_t)_{t\in\R}$.
Moreover, for all $e\in\R^2$, we have, $\mu_A$-almost surely, $Z^{e,+}=Z^{e,-}$.}

Similarly, the space $C^\circ(\R,\bar\cD)$ is a state-space for the coalescing Brownian flow (on the line).
{\em There exists a unique Borel probability measure $\bar\mu_A$ on $C^\circ(\R,\bar\cD)$
such that, for all $e=(s,x),e'=(s',x')\in\R^2$, the processes $(Z^e_t)_{t\ge s}$ and $(Z^e_tZ^{e'}_t-(t-\bar T^{ee'})^+)_{t\ge s\vee s'}$ are
continuous local martingales in the filtration $(\cF_t)_{t\in\R}$.
Moreover, for all $e\in\R^2$, we have, $\bar\mu_A$-almost surely, $Z^{e,+}=Z^{e,-}$.}

We consider a limit where $f$ becomes an increasingly well-localized perturbation of the identity map.
We quantify this localization in terms of the smallest constant $\l=\l(f,\ve)\in(0,1]$ such that
$$
\frac\rho{2\pi}\int_0^{2\pi}|f_0(x+a)f_0(x)|dx\le\l,\q a\in[\ve\l,2\pi-\ve\l].
$$
We can now state Theorem 6.1 from \cite{NT1}. We have
\begin{equation}\label{WCDF}
\mu_A^f\to\mu_A\q\text{\em weakly on $D^\circ(\R,\cD)$ uniformly in $f\in\cD^*$ as $\rho(f)\to\infty$ and $\l(f,1)\to0$}
\end{equation}
and
\begin{align}\notag
\mu_A^{f,\ve}\to\bar\mu_A\q
\text{\em weakly on $D^\circ(\R,\bar\cD)$ uniformly in } & f\in\cD^*
\text{\em as $\ve\to0$}\\ \label{WCRDF}
&
\text{\em with $\ve^3\rho(f)\to\infty$ and $\l(f,\ve)\to0$}.
\end{align}

\section{The harmonic measure flow}\label{SLAM}
We return to the aggregation model.
We assume throughout this section that \bb condition \eqref{D13} holds. \eb
The boundary $\pd K_n$ of the cluster $K_n$ has a canonical parametrization by $[0,2\pi)$ given by $\th\mapsto\Phi_n(e^{i\th})$.
For $\th_1<\th_2$, the normalized harmonic measure (from $\infty$) of the positively oriented boundary segment from
$\Phi_n(e^{i\th_1})$ to $\Phi_n(e^{i\th_2})$ is then $(\th_2-\th_1)/(2\pi)$.
We consider the related parametrization $\th\mapsto\tilde\Phi_n(i\th):\R\to\pd\tilde K_n$.
For $m\le n$, each point $z\in\pd\tilde K_n$ has a unique {\em ancestor point} $A_{mn}(z)\in\pd\tilde K_m$, which is either $z$ itself
or the point of $\pd\tilde K_m$ to which the particle containing $z$ is attached, possibly through several generations.
On the other hand, each point in $z\in\pd\tilde K_m$, except those points where particles are attached, has a unique {\em escape point}
$E_{mn}(z)\in\pd\tilde K_n$, which is either $z$ itself or is connected to $z$ by a minimal path in $\tilde K_n$, subject to not crossing any particles
nor passing through any attachment points. If $P$ is attached at a single point, then $E_{mn}(z)=z$ for all $z\in\pd\tilde K_m$. \bb These definitions are illustrated in Figure \ref{ancescapefig}. \eb

\begin{figure}[ht]
\centering
\epsfig{file=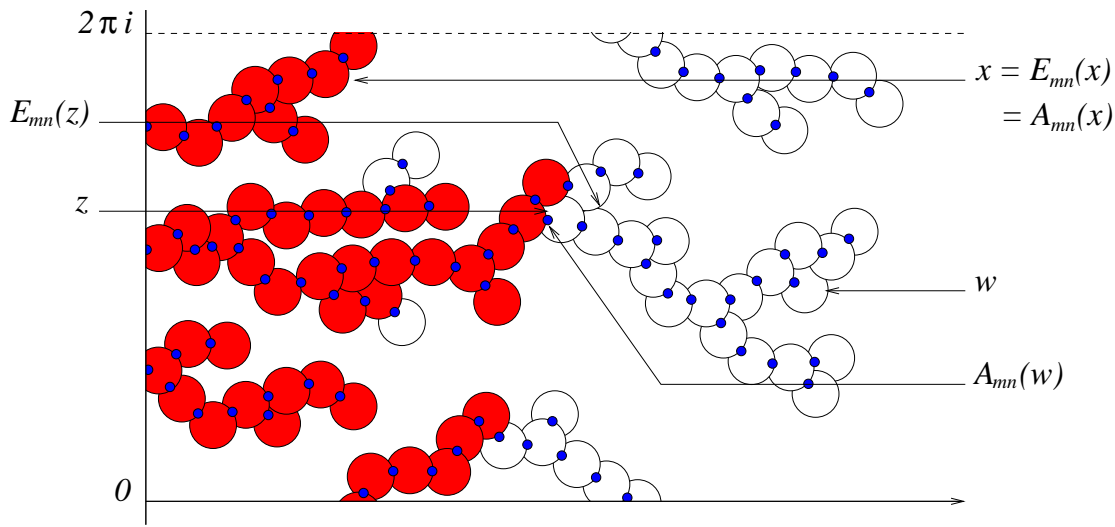,width=15cm}
\caption{\textsl{ Diagram illustrating ancestor points $A_{mn}(z)\in\pd\tilde K_m$ for $z\in\pd\tilde K_n$ and escape points $E_{mn}(z)\in\pd\tilde K_n$ for $z\in\pd\tilde K_m$, where $\tilde K_m$ is shown in red, $\tilde K_n \sm \tilde K_m$ is shown in white, and attachment points are shown in blue.}}
\label{ancescapefig}
\end{figure}

We define the {\em forwards} and {\em backwards harmonic measure flows} on $\R$, respectively, for $0\le m<n$ by
\bb
\begin{equation}\label{HMFG}
\Phi^P_{nm}(x)=-i\tilde\G_n\circ E_{nm}\circ\tilde\Phi_m(ix),\q
\Phi^P_{mn}(x)=-i\tilde\G_m\circ A_{mn}\circ\tilde\Phi_n(ix).
\end{equation}
\eb
We shall show that, when embedded suitably in continuous-time,
these flows converge weakly to the coalescing Brownian flow, as the diameter $\d$ of the basic
particle $P$ tends to $0$.
Then, in the same limiting regime, we shall deduce the behaviour of fingers and gaps in the aggregation model.

First we give an alternative presentation of the flows.
Recall the functions $g^+$ and $f^+$ defined at (\ref{DGF}) and write $g^-$ and $f^-$ for their left-continuous versions.
Then $g=\{g^-,g^+\}\in\cD$ and $f=\{f^-,f^+\}=g^{-1}$.
Since $P$ is non-empty and is invariant under conjugation, $g$ is not the identity function but is an odd function.
Hence $g\in\cD^*$.
Recall that the sequence of clusters $(K_n:n\ge0)$ is constructed from a sequence of independent random variables
$(\Th_n:n\in\N)$, uniformly distributed on $[0,2\pi)$.
Define $f_\th,g_\th\in\cD$ for $\th\in[0,2\pi)$ as in Section \ref{WC}.
Then define for $0\le m<n$
\bb
$$
\Phi^{P,\pm}_{nm}=g^\pm_{\Th_n}\circ\dots\circ g^\pm_{\Th_{m+1}},\q \Phi^{P,\pm}_{mn}=f^\pm_{\Th_{m+1}}\circ\dots\circ f^\pm_{\Th_n}.
$$
\eb
We can check (just as for the disturbance flow) that, almost surely, $\Phi^P_{nm}=\{\Phi^{P,-}_{nm},\Phi^{P,+}_{nm}\}\in\cD$
and $\Phi^P_{mn}=\{\Phi^{P,-}_{mn},\Phi^{P,+}_{mn}\}\in\cD$, with $(\Phi^P_{mn})^{-1}=\Phi^P_{nm}$. Moreover, a straightforward induction shows
that this definition agrees with the more geometric formulation in (\ref{HMFG}).

In formulating a limit statement, it is convenient to embed the harmonic measure flow in continuous time. We do this in two ways.
For a bounded interval $I\sse[0,\infty)$,
set $\Phi^P_I=\Phi^P_{nm}$, where $m+1$ and $n$ are respectively the smallest and largest integers in $\rho I$.
We set $\Phi^P_I=\id$ if there are no such integers. Then $(\Phi^P_I:I\sse[0,\infty))$ takes values in $D^\circ([0,\infty),\cD)$.
Set $\d^*=(\rho c)^{-1}$ and define $\bar\Phi^P_I(x)=(\d^*)^{-1/2}\Phi^P_{\bar n\bar m}((\d^*)^{1/2}x)$, where $\bar m+1$ and $\bar n$
are the smallest and largest integers in $c^{-1}I$. Then $(\bar\Phi^P_I:I\sse[0,\infty))$ takes values in $D^\circ([0,\infty),\bar\cD)$.

\begin{theorem}\label{HMFL}
\bg
Assume that the basic particle $P$ satisfies condition (\ref{D13}).
\eg
Then the harmonic measure flow $(\Phi^P_I:I\sse[0,\infty))$ converges weakly in $D^\circ([0,\infty),\cD)$ to the
coalescing Brownian flow on the circle, uniformly in $P$ as $\d\to0$.
Moreover, the rescaled harmonic measure flow $(\bar\Phi^P_I:I\sse[0,\infty))$ converges weakly in $D^\circ([0,\infty),\bar\cD)$ to the
coalescing Brownian flow on the line.
\end{theorem}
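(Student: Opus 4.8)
The plan is to deduce Theorem~\ref{HMFL} from the abstract weak-convergence results~(\ref{WCDF}) and~(\ref{WCRDF}) for the localized disturbance flow, by identifying the harmonic measure flow $\Phi^P$ with the disturbance flow $\Phi$ built from the disturbance $f=g^{-1}$, where $g$ is the boundary map of the basic particle defined at~(\ref{DGF}). Indeed, by the alternative presentation established just before the statement, we have $\Phi^{P,\pm}_{nm}=g^\pm_{\Th_n}\circ\dots\circ g^\pm_{\Th_{m+1}}$, which is exactly the form of the disturbance flow $\Phi_I^\pm$ in Section~\ref{WC} with disturbance $g$ (note $g\in\cD^*$, being a non-trivial odd element of $\cD$ with zero mean). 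Likewise the continuous-time embeddings $\Phi^P_I$ and $\bar\Phi^P_I$ are, by construction, precisely the $\mu_A^g$- and $\mu_A^{g,\ve}$-distributed objects of Section~\ref{WC}, with $\ve^2=\d^*=(\rho c)^{-1}$ playing the role of the diffusive rescaling parameter. So the theorem reduces to checking that the two hypotheses in~(\ref{WCDF}) and~(\ref{WCRDF}) hold as $\d\to0$.

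**First I would** verify $\rho(g)\to\infty$ uniformly in $P$. By the definition of $\rho$ in Section~\ref{WC}, $\rho(g)^{-1}=\frac1{2\pi}\int_0^{2\pi}g_0(\th)^2\,d\th$, and Proposition~\ref{TGES} gives $\d^3/C\le\frac1{2\pi}\int_0^{2\pi}g_0(\th)^2\,d\th\le C\d^3$, hence $\rho(g)\asymp\d^{-3}\to\infty$. (This also confirms the claim, made in Section~\ref{illus}, that $\rho=\rho(P)$ satisfies $\d^{-3}/C\le\rho\le C\d^{-3}$, and identifies $\d^*\asymp\d$ since $c\asymp\d^2$ by Corollary~\ref{GZEE}.) **Next I would** verify the localization: I must bound $\l(g,\ve)$, the smallest $\l\in(0,1]$ with $\frac\rho{2\pi}\int_0^{2\pi}|g_0(x+a)g_0(x)|dx\le\l$ for all $a\in[\ve\l,2\pi-\ve\l]$. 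For the circle statement~(\ref{WCDF}) I take $\ve=1$; for the line statement~(\ref{WCRDF}) I take $\ve^2=\d^*\asymp\d$ and need both $\ve^3\rho(g)\to\infty$ (clear, since $\ve^3\rho\asymp\d^{3/2}\cdot\d^{-3}=\d^{-3/2}\to\infty$) and $\l(g,\ve)\to0$. Proposition~\ref{TGES} supplies $\frac1{2\pi}\int_0^{2\pi}|g_0(\th)g_0(\th+a)|d\th\le\frac{C\d^4}{a}\log(1/\d)$ for $a\in[\d,\pi]$; multiplying by $\rho\asymp\d^{-3}$ gives $\frac\rho{2\pi}\int|g_0(x+a)g_0(x)|dx\le\frac{C'\d\log(1/\d)}{a}$. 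Choosing $\l$ of order $\d^{1/2}\log(1/\d)$ (say) makes the required inequality hold for all $a\ge\ve\l$ with $\ve\asymp\d^{1/2}$, since then $\ve\l\ge\d$ for small $\d$ and $C'\d\log(1/\d)/(\ve\l)=C'\d^{1/2}\log(1/\d)/\l\le\l$; one checks the symmetric range $a\in[2\pi-\ve\l,2\pi]$ by periodicity. Thus $\l(g,\ve)\to0$, uniformly in $P$, in both the $\ve=1$ and the $\ve\asymp\sqrt{\d^*}$ regimes.

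**With these two inputs** the conclusion is immediate: applying~(\ref{WCDF}) gives $\mu_A^g\to\mu_A$ weakly on $D^\circ(\R,\cD)$, which, after restricting the flow index set to $[0,\infty)$, is the statement that $(\Phi^P_I:I\sse[0,\infty))$ converges weakly to the coalescing Brownian flow on the circle, uniformly in $P$ as $\d\to0$; and applying~(\ref{WCRDF}) with the rescaling parameter $\ve=\sqrt{\d^*}$ gives $\mu_A^{g,\ve}\to\bar\mu_A$, i.e.\ $(\bar\Phi^P_I:I\sse[0,\infty))$ converges weakly to the coalescing Brownian flow on the line. A minor point to dispatch is that Section~\ref{WC} works with flows indexed by all bounded intervals in $\R$ whereas here the index runs over $[0,\infty)$; this is handled by noting that the restriction map $D^\circ(\R,\cD)\to D^\circ([0,\infty),\cD)$ is continuous and that the law $\mu_A^g$ is invariant under time-translation, so the one-sided flow is just the restriction of the two-sided one and convergence is preserved.

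**The main obstacle** is not really in this deduction at all, which is essentially bookkeeping once Proposition~\ref{TGES} is in hand; the substantive work is entirely upstream, namely (i) the conformal-geometry estimates of Section~\ref{SBE} that produce the sharp bounds $\rho\asymp\d^{-3}$ and the decorrelation bound on $\int|g_0(x)g_0(x+a)|dx$, and (ii) the abstract invariance principle~(\ref{WCDF})--(\ref{WCRDF}) imported from~\cite{NT1}. The only genuine care needed within the present proof is to track the uniformity in $P$: every constant appearing above must be absolute, which it is, since Propositions~\ref{GZE} and~\ref{TGES} and Corollary~\ref{GZEE} are all stated with absolute constants, and the convergences in~(\ref{WCDF}) and~(\ref{WCRDF}) are explicitly uniform over $\cD^*$.
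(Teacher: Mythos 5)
Your proof follows exactly the paper's route: identify $\Phi^P$ and $\bar\Phi^P$ as disturbance flows with disturbance $g$, extract $\rho(g)\asymp\d^{-3}$, $c\asymp\d^2$, hence $\ve^3\rho\to\infty$, and the decorrelation bound from Proposition \ref{TGES} and Corollary \ref{GZEE}, then invoke (\ref{WCDF}) and (\ref{WCRDF}). Two small corrections. First, the disturbance is $g$ itself, not $f=g^{-1}$ as your opening sentence says (the rest of your paragraph uses $g$ correctly). Second, your choice of $\l$ in the localization step does not work as written: with $\ve\asymp\d^{1/2}$ and $\l\asymp\d^{1/2}\log(1/\d)$ the worst case $a=\ve\l$ gives $C'\d\log(1/\d)/(\ve\l)\asymp C'$, a constant, which is not $\le\l$. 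What you need is $\l^2\ge C'\d\log(1/\d)/\ve$, i.e.\ $\l\asymp\d^{1/4}(\log(1/\d))^{1/2}$; this still satisfies $\ve\l\ge\d$ for small $\d$ (so Proposition \ref{TGES} applies on the whole range) and still tends to $0$, so the conclusion is unaffected.
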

\begin{proof}
The flow $(\Phi^P_I:I\sse[0,\infty))$ is a disturbance flow with disturbance $g$
and $(\bar\Phi^P_I:I\sse[0,\infty))$ is an $\ve$-scale disturbance flow with disturbance $g$, where $\ve=\sqrt{\d^*}$.
From Corollary \ref{GZEE} we know that $\d^2/6\le c\le 3\d^2/4$ and from Proposition \ref{TGES}, we have $\d^{-3}/C\le\rho\le C\d^{-3}$.
Hence $\d^*=(\rho c)^{-1}$ satisfies $\d/C\le\d^*\le C\d$ for an absolute constant $C<\infty$.
In particular $\rho\to\infty$ and $\ve\to0$ and $\ve^3\rho\ge\d^{-3/2}/C\to\infty$ as $\d\to0$.
Also, from Proposition \ref{TGES}, for $a\in[\d,\pi]$, we have
$$
\frac\rho{2\pi}\int_0^{2\pi}|g_0(\th)g_0(\th+a)|d\th\le\frac{C\d}a\log\left(\frac1\d\right)
$$
so $\l(g,1)\le\l(g,\ve)\to0$ as $\d\to0$.
The result thus follows from (\ref{WCDF}) and (\ref{WCRDF}).
\end{proof}

We can now deduce the limiting joint distribution of fingers and gaps.
Recall that $\cS$ denotes the space of locally compact subsets of $[0,\infty)\times\R$, equipped with a local Hausdorff metric.
We have fixed $T>0$ and a finite subset $E$ of $[0,T]\times\R$.
Recall that we study the cluster $K_N$ and have introduced in Section \ref{illus} associated path-like random sets $\fing(z)$ and
$\gap(z)$, along with rescaled sets $\F(e),\bF(e),\GG(e)$ and $\bGG(e)$.
Write $\mu^P_E$ for the law of $(\F(e),\GG(e):e\in E)$ when $N=\lfloor\rho T\rfloor$, considered as a random variable in $(\cS^E)^2$.
Similarly, write $\bar\mu^P_E$ for the law of $(\bF(e),\bGG(e):e\in E)$ when $N=\lfloor c^{-1}T\rfloor$.
Write $\mu_E$ for the law on $(\cS^E)^2$ of the family of random sets
$$
(\{(t,\Phi_{ts(e)}(x(e))):t\in[0,s(e)]\},\{(t,\Phi_{t\wedge T,s(e)}(x(e))):t\ge s(e)\}:e\in E)
$$
where $\Phi$ is a coalescing Brownian flow on the circle and where we set $\Phi_{st}=\Phi_{ts}^{-1}$ for $s\le t$.
Write also $\bar\mu_E$ for the corresponding law when we replace $\Phi$ by a coalescing Brownian flow $\bar\Phi$ on the line.

\begin{theorem}\label{FLOWFING}
\bg
Assume that the basic particle $P$ satisfies conditions (\ref{D13}),(\ref{D14}) and (\ref{HMCO}).
\eg
Then $\mu_E^P\to\mu_E$ and $\bar\mu_E^P\to\bar\mu_E$ weakly on $(\cS^E)^2$, uniformly in $P$ as $\d\to0$.
\end{theorem}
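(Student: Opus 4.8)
The plan is to derive Theorem~\ref{FLOWFING} by combining the macroscopic shape estimate Theorem~\ref{BALLG} with the flow convergence Theorem~\ref{HMFL}: the geometric content is that, on a single high-probability event, each rescaled finger is close in the local Hausdorff metric to the graph of a \emph{backwards} trajectory of the harmonic measure flow issued from $e$, and each rescaled gap is close to the graph of a \emph{forwards} trajectory issued from $e$, after which Theorem~\ref{HMFL} transports the limit. First I would fix the regime: for the statements about $\F,\GG$ take $N=\lfloor\rho T\rfloor$, and for those about $\bF,\bGG$ take $N=\lfloor c^{-1}T\rfloor$; by Corollary~\ref{GZEE} and Proposition~\ref{TGES} both satisfy $N\le m:=\lfloor\d^{-6}\rfloor$ for small $\d$, so I may work on the event $\O(m,\ve,\nu,\eta)$ with $\ve=\d^{2/3}\log(1/\d)$, $\nu=(\log(1/\d))^2$, $\eta=\d^{2/3}(\log(1/\d))^6$, which by Propositions~\ref{FLUD},~\ref{NOTLATE},~\ref{OMNE} has probability tending to $1$ uniformly in $P$. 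On this event, as in the proof of Theorem~\ref{BALLG}, every particle $\tilde P_n$ with $n\le m$ has diameter $O(\ve_0)$ in logarithmic coordinates and lies within $O(\ve_0)$ of $cn+i\Th_n$, where $\ve_0=\d^{2/3}(\log(1/\d))^8$, while by (\ref{NBH}) no ball of radius $56\ve$ centred on a line $\ell_R$ with $R\le cN$ is disjoint from $\tilde K_N$. Since $\d/C\le\d^*\le C\d$, particle diameters become $O(\ve_0)\to0$ under $\s$ and $O(\ve_0/\sqrt{\d^*})\to0$ under $\bar\s$; so in both scalings the finger and gap sets differ by $o(1)$ in Hausdorff distance from the unions of the relevant particle attachment points and boundary points.

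For a finger, let $z$ be the centre of $\F(e)$ (resp.\ $\bF(e)$), so that $\re(z)=cn^*+o(1)$ with $n^*$ the generation $\rho s(e)$ (resp.\ $c^{-1}s(e)$), which is at most $N$. On the event above the closest particle $\tilde P_0(z)$ is some $\tilde P_{j_0}$ with $cj_0=\re(z)+O(\ve)$ and with boundary parameters within $o(1)$ of $x(e)$ (using that $\tilde P_{j_0}$ has log-diameter $O(\d)$ and is attached near angle $\Th_{j_0}=x(e)+O(\ve)$), and its chain of parents $\tilde P_m(z)=\tilde P_{j_m}$ satisfies $\tilde A_{j_m}:=\tilde\Phi_{j_m-1}(i\Th_{j_m})\in\tilde P_{j_{m+1}}$. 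Hence the sequence of attachment points $\tilde A_{j_m}$ coincides, up to $O(\ve_0)$, with the ancestor-point sequence that defines the backwards harmonic measure flow $\Phi^P_{mn}$ of (\ref{HMFG}), read along a decreasing sequence of generations whose successive gaps are $o(\rho)$ (consecutive attachment points lie within $O(\ve_0)$ of each other). Since by Proposition~\ref{TGES} one application of any $g_\th$ or $f_\th$ displaces a point by at most $O(\d)$, interpolation between integer generations is harmless after rescaling. Thus $\s(\fing(z))$ lies within $o(1)$ of $\{(t,\Phi^P_{ts(e)}(x(e))):0\le t\le s(e)\}$ (with $\Phi^P_{ts}=(\Phi^P_{st})^{-1}$ for $t\le s$, exactly as in the definition of $\mu_E$), and likewise $\bar\s(\fing(z))$ of the diffusively rescaled trajectory graph.

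For a gap I would argue dually. The minimality property recorded just after the definition of $\gap(z)$ confines the taut path, at each radial level $R\le cN$, to the angular interval cut out by the two particle-boundary points that bound the fjord it follows; transported to $\pd\tilde D_0$ by $\tilde\G_{\lfloor R/c\rfloor}$, those endpoints evolve in $R$ according to the escape maps $E_{mn}$, that is according to the forwards harmonic measure flow $\Phi^P_{nm}$ of (\ref{HMFG}). Beyond radial level $cN$, on the event above the complement $\tilde D_N$ lies within $O(\ve)$ of the half-plane $\{\re>cN\}$, so the taut path continues essentially horizontally there, which is exactly what produces the truncation at $t\wedge T$. Combined with the particle size bound and the per-step displacement bound as before, this shows that $\s(\gap(z))$ (resp.\ $\bar\s(\gap(z))$) lies within $o(1)$ of $\{(t,\Phi^P_{t\wedge T,\,s(e)}(x(e))):t\ge s(e)\}$.

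To conclude, note that the maps sending a weak flow $\phi\in D^\circ([0,\infty),\cD)$ to the families of backwards- and forwards-trajectory graphs in $\cS^E$ built from $\phi$ and its inverse flow are continuous at every continuous weak flow: by the characterisation of $d_D$ recalled in Section~\ref{WC}, convergence to a $C^\circ$-flow is equivalent to uniform convergence of $\phi_I$ over subintervals of compact sets, which controls the trajectories and, using $Z^{e,+}=Z^{e,-}$ in the limit, their graphs (the non-periodic case is identical). Writing $\Gamma$ for the resulting map into $(\cS^E)^2$, the previous two paragraphs give that $(\F,\GG)$ and $\Gamma(\Phi^P)$, and $(\bF,\bGG)$ and $\Gamma(\bar\Phi^P)$, are within $o(1)$ of each other in the local Hausdorff metric on a set of probability $1-o(1)$, uniformly in $P$, where $\Phi^P$ and $\bar\Phi^P$ are the harmonic measure flow and its diffusive rescaling. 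By Theorem~\ref{HMFL} these converge weakly to the coalescing Brownian flows on the circle and on the line, which are supported on $C^\circ$-flows, so by the continuous mapping theorem $\Gamma(\Phi^P)\Rightarrow\mu_E$ and $\Gamma(\bar\Phi^P)\Rightarrow\bar\mu_E$, and hence, by a Slutsky-type argument, $\mu_E^P\to\mu_E$ and $\bar\mu_E^P\to\bar\mu_E$ weakly, uniformly in $P$. I expect the gap analysis to be the main obstacle: the gap path is defined by a global length-minimisation rather than by the parent structure, so identifying its angular coordinate with the forwards (escape) harmonic measure flow, and tracking it as it crosses and then leaves the cluster, needs the most care; a secondary difficulty is the uniform-in-$P$ bookkeeping needed to pass between the discrete-generation flow (in which the ancestor generation may jump by more than one) and the continuous-time flow.
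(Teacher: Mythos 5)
Your treatment of the fingers is essentially the paper's argument: the exact identity $\Th_{m+1}+2\pi j=\Phi^P_{mn}(\Th_{n+1}+2\pi k)$ for an ancestor particle, combined with Theorem \ref{BALLG} to place each particle within $O(\ve_0)$ of $cn+i\Th_n$ and the observation that $\ve_0$ is negligible even after the $1/\sqrt{\d^*}$ vertical rescaling (this is where $2/3>1/2$ is used), does show that $\F(e)$ and $\bF(e)$ track the backwards flow trajectory. Your packaging via a continuity/continuous-mapping argument for the map from flows to trajectory graphs, rather than the paper's Skorokhod-type coupling with a coalescing Brownian flow, is a cosmetic difference and both routes work.

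The genuine gap is in the analysis of $\GG(e)$ and $\bGG(e)$. You assert that the minimality of the taut path confines it, at each radial level, to an angular interval whose endpoints ``evolve according to the escape maps $E_{mn}$, that is according to the forwards harmonic measure flow.'' This is precisely the statement that needs proof, and it does not follow from the definitions: the gap path is defined by global length-minimisation in $\tilde D_N$, the escape points are defined by a quite different minimality inside $\tilde K_n$, and there is no a priori identification of ``the fjord it follows'' with a single pair of escape trajectories (the path could in principle wander into side fjords, or the fjord boundary could be built from particles whose angular coordinates are not controlled by a single forward trajectory). The paper's proof does \emph{not} use the escape maps at all at this point. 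Instead it (i) shows the taut path cannot backtrack more than $O(\ve')$ in the radial direction, using the ancestry/backwards-flow estimates; (ii) uses the backwards flow again to show that every particle lying (say) at least $\ve_0$ \emph{above} the forward Brownian trajectory has all its ancestors at generation $\approx \rho s(e)$ with angular coordinate $\ge x(e)+\ve$, so its entire finger is separated from the starting half-line $\{(s_0,x):x\le x_0\}$; and (iii) builds an explicit barrier $B$ out of line segments joining a chain of cluster points flanking the forward trajectory at distance $\approx\ve_0$, and shows by a minimality-plus-connectivity argument (any particle met by a chord between two crossing points would have a finger disconnected from the imaginary axis by the gap path, which is impossible) that $p$ can never cross $B$. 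Together with the analogous barrier below, this pins $\GG(e)$ within $2\ve_0$ of the forward trajectory. Some such separation/barrier mechanism — reducing the forward statement about gaps to the backward statement about ancestors via the duality $\Phi^P_{nm}=(\Phi^P_{mn})^{-1}$ — is indispensable, and your proposal, which you yourself flag as the main obstacle, does not supply it.
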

\begin{proof}
We consider first the long time case.
Given $\ve_0>0$, there exist $\ve>0$ and $\ve'\in(0,\ve/3]$ such that, for any coalescing Brownian flow $\Phi=(\Phi_{ts}:0\le s\le t\le T)$ on the circle,
with probability exceeding $1-\ve_0/3$,
for all $e\in E$ and all $t\in[0,T]$, we have
$$
\Phi_{ts(e)}(x(e))-\ve_0\le\Phi_{ts(e)}(x(e)-5\ve)-5\ve,\q\Phi_{ts(e)}(x(e)+5\ve)+5\ve\le\Phi_{ts(e)}(x(e))+\ve_0
$$
and, for all $s,s',t,t'\in[0,T]$ with $|s-s'|,|t-t'|\le3\ve'$ and all $x\in\R$
$$
\Phi_{ts}(x)\le\Phi_{t's'}(x+\ve)+\ve.
$$
\bb Note that these conditions imply $5\ve \le \ve_0$. \eb
Here we have used some standard estimates for Brownian motion and the fact that
the map $(s,t)\mapsto\Phi_{ts}:[0,T]^2\to\cD$ is uniformly continuous, almost surely.
Here and below such inequalities are each to be understood as a pair of inequalities,
one for left-continuous versions and the other for right-continuous versions.

Then, by Theorem \ref{HMFL}, and using a standard result on weak convergence,
there exists a $\d_0>0$ such that,
for all $\d\in(0,\d_0]$ and all basic particles $P$ satisfying \bb \eqref{D13}\eb, 
for $N=\lfloor\rho T\rfloor$, we can construct, on some probability space,
an $HL(0)$ process $\Phi^P=(\Phi_n^P:n\le N)$ with basic particle $P$ and a coalescing Brownian flow $\Phi=(\Phi_{ts}:0\le s\le t\le T)$
on the circle with the following property.
With probability exceeding $1-\ve_0/3$,
for all $0\le m<n\le N$, for $t=m/\rho$ and $s=n/\rho$, and for all $x\in\R$, we have
$$
\Phi_{ts}(x-\ve)-\ve\le\Phi^P_{mn}(x)\le\Phi_{ts}(x+\ve)+\ve.
$$
Here $(\Phi_{mn}^P:0\le m<n\le N)$ is the backwards harmonic measure flow
of $\Phi^P$ (which determines $(\Th_n:1\le n\le N)$ and hence $\Phi^P$ uniquely).

Moreover, by Theorem \ref{BALLG}, we may choose $\d_0$ so that,
with probability exceeding $1-\ve_0/3$,
for all $e\in E$, writing $z(e)=\s^{-1}(e)=s(e)/\d^*+ix(e)$ and $\s(p_0(z(e)))=(s_0,x_0)$, we have
$$
|s_0-s(e)|\le\ve'/3,\q |x_0-x(e)|\le\ve
$$
and,
for all $(s,x)\in[0,T]\times\R$,
there exists $w\in\tilde K_N$ such that $\s(w)=(t,y)$ satisfies
$$
|s-t|\le\ve'/3,\q |x-y|\le\ve
$$
and, for all $n\le N-1$ and all $z\in\tilde P_{n+1}$, $\s(z)=(s,x)$ satisfies
$$
|s-n/\rho|\le\ve'/3,\q |x-\Th_{n+1}|\le\ve.
$$

From this point on, we condition on the good event $\O_0$ of probability exceeding $1-\ve_0$ where all
of the properties discussed above hold.
Suppose that we fix $j,k\in\Z$ and $m,n\le N-1$ and $w\in\tilde P_{m+1}+2\pi ij$ and $z\in\tilde P_{n+1}+2\pi ik$,
with $\tilde P_{m+1}+2\pi ij$ an ancestor particle of $\tilde P_{n+1}+2\pi ik$.
Write $\s(w)=(t,y)$ and $\s(z)=(s,x)$.
Then we must have $m=\rho t'\le n=\rho s'$, with $|s-s'|,|t-t'|\le\ve'/3$ and $|y-(\Th_{m+1}+2\pi j)|,|x-(\Th_{n+1}+2\pi k)|\le\ve$.
Now $\Phi_{mn}^P$ is continuous and
$$
\Th_{m+1}+2\pi j=\Phi^P_{mn}(\Th_{n+1}+2\pi k)
$$
so
\begin{align*}
y\le\Th_{m+1}+2\pi j+\ve&=\Phi^P_{mn}(\Th_{n+1}+2\pi k)+\ve\\
&\le\Phi_{t's'}(\Th_{n+1}+2\pi k+\ve)+2\ve\le\Phi_{t's'}(x+2\ve)+2\ve\le\Phi_{ts}(x+3\ve)+3\ve.
\end{align*}
and by a similar argument also $y\ge\Phi_{ts}(x-3\ve)-3\ve$.
Here we have extended $\Phi$ by setting \bb$\Phi_{ts}=\Phi_{t\wedge T,s\wedge T}$\eb.

Fix $e\in E$ and $(t,y)\in\F(e)$. Write $(t,y)=\s(w)$ and $\tilde P_0(z(e))=\tilde P_{n+1}+2\pi ik$.
We can choose $z\in\tilde P_0(z(e))$ with $\s(z)=(s,x)$ and $|s-s(e)|\le\ve'/3$ and $|x-x(e)|\le\ve$.
Set $u=t\wedge s(e)$.
Then $w$ and $z$ are related as in the preceding paragraph and
$$
t\le t'+\ve'/3\le s'+\ve'/3\le s+2\ve'/3\le s(e)+\ve'\le s(e)+\ve_0
$$
so $|t-u|\le\ve'$. Hence
$$
y\le\Phi_{ts}(x+3\ve)+3\ve\le\Phi_{us(e)}(x+4\ve)+4\ve\le\Phi_{us(e)}(x(e)+5\ve)+5\ve\le\Phi_{us(e)}(x(e))+\ve_0
$$
and similarly
$$
y\ge\Phi_{us(e)}(x(e))-\ve_0.
$$
Since $(t,y)$ was arbitrary, we have shown that
\begin{align*}
\F(e)&\sse\{(t,y):t\in[0,s(e)]\text{ and }|y-\Phi_{ts(e)}(x(e))|\le\ve_0\}\\
&\q\q\cup\{(t,y):t\in[s(e),s(e)+\ve_0]\text{ and }|y-x(e)|\le\ve_0\}
\end{align*}
and, since $\F(e)$ is a connected set joining $(s,x)$ to the imaginary axis, this implies for the Hausdorff metric $d_H$ that
$$
d_H(\F(e),\{(t,\Phi_{ts(e)}(x(e))):0\le t\le s(e)\})\le2\ve_0.
$$

We complete the proof by obtaining an analogous estimate for $\GG(e)$.
Recall that $\GG(e)=\{\s(p_\t):\t\ge0\}$ where $p=p(z(e))$ is the minimal length gap path
starting from $p_0(z(e))$, the closest point to $z(e)$ which is not in the interior of $\tilde K_N$.
Write $\s(p_0(z(e)))=(s_0,x_0)$.

First we show that minimal gap paths cannot backtrack too much.
Suppose that $t<s(e)-\ve'$ and $p$ makes an excursion left of the line $\{t/\d^*+iy:y\in\R\}$, with endpoints $w^-,w^+$, say.
Then the open line segment $(w^-,w^+)$ must contain a point of $\tilde K_N$, say $w\in\tilde P_{m+1}+2\pi ij$. Set $\s(w)=(t,y)$.
Then, since $p$ cannot cross $\tilde K_N$,
there must exist $z\in\tilde P_{n+1}+2\pi ik$, an ancestor particle of $\tilde P_{m+1}+2\pi ij$, with $\s(z)=(s,x)$, say, and $s\ge s_0$.
But then
$$
s(e)\le s_0+\ve'/3\le s+\ve'/3\le n/\rho+2\ve'/3\le m/\rho+2\ve'/3\le t+\ve'<s(e)
$$
which is impossible. Hence there is no such excursion and so
$$
\GG(e)\sse\{(s,x):s\ge s(e)-\ve',x\in\R\}.
$$

Consider $(t,y)=\s(w)$ with $w\in\tilde P_{m+1}+2\pi ij$ and $m\le N-1$ and $t\ge s(e)-3\ve'$ and $y\ge\Phi_{vs(e)}(x(e))+\ve_0$,
where $v=s(e)\vee t\wedge T$. Note that $t\le T+\ve'/3$ and $|v-t|\le3\ve'$.
Suppose $(s,x)=\s(z)$ with $z\in\tilde P_{n+1}+2\pi ik$ and $|s-s(e)|\le\ve'$ and where $\tilde P_{n+1}+2\pi ik$ is an ancestor particle of $\tilde P_{m+1}+2\pi ij$.
Then
$$
x\ge\Phi_{st}(y-3\ve)-3\ve\ge\Phi_{s(e)v}(y-4\ve)-4\ve\ge x(e)+\ve.
$$
Hence $\F(t,y)$ does not meet the vertical half-line $\{(s_0,x):x\le x_0\}$.

Define
$$
\Phi(e)=\{(t,\Phi_{t\wedge T,s(e)}(x(e))):t\ge s(e)\}
$$
and set $I=[s(e)-2\ve',T]$.
There exists a continuous function $(y(t):t\in I)$ such that, for all $t\in I$, setting $v=s(e)\vee t\wedge T$, we have
$$
y(t)>\Phi_{vs(e)}(x(e)),\q d((t,y(t)),\Phi(e))=\ve_0+\ve+5\ve'.
$$
Define recursively a sequence $\t_0,\dots,\t_M$ by setting $\t_0=s(e)-2\ve'$ and then taking $\t_{n+1}$ as the supremum
of the set
$$
\bb
\{\t\in [\tau_n,T]:|(\t,y(\t))-(\t_n,y(\t_n))|=\ve'\}
\eb
$$
until $n=M-1$ when this set is empty and we set $\t_M=T$.
For $n=0,1,\dots,M$, choose $w_n\in\tilde K_N$ with $\s(w_n)=(t_n,y_n)$ and $|t_n-\t_n|\le\ve'$ and $|y_n-y(\t_n)|\le\ve$.
Note that $t_0\le s(e)-\ve'$ and $t_M\ge T-\ve'$ and $t_n\in[s(e)-3\ve',T+\ve']$ for all $n$.
Set
\bb
$$
B_0=\bigcup_{n=0}^{M-1}[w_n,w_{n+1}),\q B_1=\{t/\d^*+iy_M:t\ge t_M\},\q B=B_0\cup B_1.
$$
Then\eb,  for any $w\in B_0$, for $(t,y)=\s(w)$,
we have $|(t,y)-(\t_n,y(\t_n))|\le\ve+2\ve'$ for some $n$, so $\ve_0+3\ve'\le d((t,y),\Phi(e))\le\ve_0+2\ve+7\ve'$
and so $y\ge\Phi_{vs(e)}(x(e))+\ve_0$,
where $v=s(e)\vee t\wedge T$. The final inequality obviously extends to $B$.

Suppose $p$ crosses $B$, and does so for the first time at $\bb \t(1) \eb$. Consider first the case where $p_{\t(1)}\in[w_n,w_{n+1})$.
Then, since $w_n$ and $w_{n+1}$ are both connected to the imaginary axis in $\tilde K_N$ and $p$ cannot cross $\tilde K_N$,
it must eventually hit $[w_n,w_{n+1}]$ again after $\t(1)$, at time $\bb \t(2) \eb$ say, except possibly if $p_{\t(1)}=w_n$.
If the open line segment $(p_{\t(1)},p_{\t(2)})$ contains a point $w\in\tilde K_N$ with $\s(w)=(t,y)$, then for all $z\in\fing(w)$ with $\s(z)=(s,x)$
and $|s-s(e)|\le\ve'$ we have $x\ge x(e)+\ve'$. But this is impossible because $w$ is disconnected from the imaginary axis
by $\{s_0/\d^*+ix:x\le x_0\}\cup\{p_\t:\t\ge0\}$. Hence $(p_{\t(1)},p_{\t(2)})\sse\tilde D_N$, so $p_\t\in[p_{\t(1)},p_{\t(2)}]$ for all $\t\in(\t(1),\t(2))$,
contradicting our crossing assumption. In the case $p_{\t(1)}=w_n$, if $p$ does not return to $[w_n,w_{n+1}]$, then it must hit $[w_{n-1},w_n]$ instead
and this also leads to a contradiction by a similar argument. The case where $p_{\t(1)}\in B_1$ also leads to a contradiction
of minimality by a similar argument.
Hence $p$ never crosses $B$.
So, for all $(t,y)\in\GG(e)$ with $y\ge\Phi_{vs(e)}(x(e))$, we have $d((t,y),\Phi(e))\le\ve_0+2\ve+7\ve'\le2\ve_0$.
A similar argument establishes this estimate also in the case $\bb y\le\Phi_{vs(e)}(x(e)) \eb$.
Since $\GG(e)$ is a connected set joining $(s_0,x_0)$ to $\{T\}\times\R$, this implies
$$
d_H(\GG(e),\Phi(e))\le2\ve_0.
$$

We turn now to the local fluctuations. The argument is mainly similar. It becomes crucial that Theorem \ref{BALLG}
provides approximation on a scale just larger than $\d^{2/3}$, allowing us to transfer fluctuation
results from Theorem \ref{HMFL} at scale $\d^{1/2}$ to the cluster. There is also some loss of compactness
in the local limit which requires attention.

Given $\bb0<\ve_0<1/3\eb$, there exist $\ve>0$ and $R\in[1,\infty)$ and $\ve'\in(0,\ve/3]$ such that,
for any coalescing Brownian flow $\bar\Phi=(\bar\Phi_{ts}:0\le s\le t\le T)$ on the line,
with probability exceeding $1-\ve_0/3$,
for all $e\in E$ and all $t\in[0,T]$, we have
$$
|\bar\Phi_{ts(e)}(x(e))|\le R
$$
and
$$
\bar\Phi_{ts(e)}(x(e))-\ve_0\le\bar\Phi_{ts(e)}(x(e)-5\ve)-5\ve,\q\bar\Phi_{ts(e)}(x(e)+5\ve)+5\ve\le\bar\Phi_{ts(e)}(x(e))+\ve_0
$$
and, for all $s,s',t,t'\in[0,T]$ with $|s-s'|,|t-t'|\le3\ve'$ and all $|x|\le2R$
$$
\bar\Phi_{ts}(x)\le\bar\Phi_{t's'}(x+\ve)+\ve.
$$
Uniform continuity of the map $(s,t)\mapsto\bar\Phi_{ts}:[0,T]^2\to\bar\cD$ now provides only local estimates in $x$,
hence the need for the cut-off $R$.

Then, by Theorem \ref{HMFL},
there exists a $\d_0>0$ such that,
for all $\d\in(0,\d_0]$ and all basic particles $P$ satisfying \bb \eqref{D13}, \eb 
for $N=\lfloor c^{-1}T\rfloor$, we can construct, on some probability space,
an $HL(0)$ process $\Phi^P=(\Phi_n^P:n\le N)$ with basic particle $P$ and a coalescing Brownian flow $\bar\Phi=(\bar\Phi_{ts}:0\le s\le t\le T)$
on the line with the following property.
Write $(\Phi_{mn}^P:0\le m<n\le N)$ for the backwards harmonic measure flow of $\Phi^P$
and set $\bar\Phi_{mn}^P(x)=(\d^*)^{-1/2}\Phi_{mn}^P((\d^*)^{1/2}x)$.
With probability exceeding $1-\ve_0/3$,
for all $0\le m<n\le N$, for $t=cm$ and $s=cn$, and for all $|x|\le2R$, we have
$$
\bar\Phi_{ts}(x-\ve)-\ve\le\bar\Phi^P_{mn}(x)\le\bar\Phi_{ts}(x+\ve)+\ve.
$$

Moreover, by Theorem \ref{BALLG}, we may choose $\d_0$ so that,
with probability exceeding $1-\ve_0/3$,
for all $e\in E$, writing $z(e)=\bar\s^{-1}(e)=s(e)+i(\d^*)^{1/2}x(e)$ and $\bar\s(p_0(z(e)))=(s_0,x_0)$, we have
$$
|s_0-s(e)|\le\ve'/3,\q |x_0-x(e)|\le\ve
$$
and,
for all $s\in[0,T]$ and all $x\in\R$,
there exists $w\in\tilde K_N$ such that $\bar\s(w)=(t,y)$ satisfies
$$
|s-t|\le\ve'/3,\q |x-y|\le\ve
$$
and, for all $n\le N-1$ and all $z\in\tilde P_{n+1}$, $\bar\s(z)=(s,x)$ satisfies
$$
|s-cn|\le\ve'/3,\q |x-\Th_{n+1}/\sqrt{\d^*}|\le\ve.
$$

From this point on, we condition on the good event $\O_0$ of probability exceeding $1-\ve_0$ where all
of the properties discussed above hold.
Suppose that we fix $j,k\in\Z$ and $m,n\le N-1$ and $w\in\tilde P_{m+1}+2\pi ij$ and $z\in\tilde P_{n+1}+2\pi ik$,
with $\tilde P_{m+1}+2\pi ij$ an ancestor particle of $\tilde P_{n+1}+2\pi ik$.
Write $\bar\s(w)=(t,y)$ and $\bar\s(z)=(s,x)$ and suppose that $|x|+2\ve\le2R$.
Then we must have $m=c^{-1}t'\le n=c^{-1}s'$, with $|s-s'|,|t-t'|\le\ve'/3$ and $|y-(\Th_{m+1}+2\pi j)/\sqrt{\d^*}|,|x-(\Th_{n+1}+2\pi k)/\sqrt{\d^*}|\le\ve$,
so
\bb
\begin{align*}
y&\le(\Th_{m+1}+2\pi j)/\sqrt{\d^*}+\ve=\bar\Phi^P_{mn}((\Th_{n+1}+2\pi k)/\sqrt{\d^*})+\ve\\ &\le\bar\Phi_{t's'}((\Th_{n+1}+2\pi k)/\sqrt{\d^*}+\ve) +2\ve\le\bar\Phi_{t's'}(x+2\ve)+2\ve\le\bar\Phi_{ts}(x+3\ve)+3\ve.
\end{align*}
\eb
and by a similar argument also $y\ge\bar\Phi_{ts}(x-3\ve)-3\ve$.
Here we have extended $\bar\Phi$ by setting $\bb \bar\Phi_{ts}=\bar\Phi_{t\wedge T,s\wedge T} \eb$.

Fix $e\in E$ and $(t,y)\in\bar\F(e)$. Write $(t,y)=\bar\s(w)$ and $\tilde P_0(z(e))=\tilde P_{n+1}+2\pi ik$.
We can choose $z\in\tilde P_0(z(e))$ with $\bar\s(z)=(s,x)$ and $|s-s(e)|\le\ve'/3$ and $|x-x(e)|\le\ve$.
In particular $\bb |x|+2\ve\le|x(e)|+3\ve\le2R \eb$.
Set $u=t\wedge s(e)$.
Then $w$ and $z$ are related as in the preceding paragraph and
$$
t\le t'+\ve'/3\le s'+\ve'/3\le s+2\ve'/3\le s(e)+\ve'\le s(e)+\ve_0
$$
so $|t-u|\le\ve'$. Hence
$$
y\le\bar\Phi_{ts}(x+3\ve)+3\ve\le\bar\Phi_{us(e)}(x+4\ve)+4\ve\le\bar\Phi_{us(e)}(x(e)+5\ve)+5\ve\le\bar\Phi_{us(e)}(x(e))+\ve_0
$$
and similarly
$$
y\ge\bar\Phi_{us(e)}(x(e))-\ve_0.
$$
Since $(t,y)$ was arbitrary, we have shown that
\begin{align*}
\bar\F(e)&\sse\{(t,y):t\in[0,s(e)]\text{ and }|y-\bar\Phi_{ts(e)}(x(e))|\le\ve_0\}\\
&\q\q\cup\{(t,y):t\in[s(e),s(e)+\ve_0]\text{ and }|y-x(e)|\le\ve_0\}
\end{align*}
and, since $\bar\F(e)$ is a connected set joining $(s,x)$ to the imaginary axis, this implies for the Hausdorff metric $d_H$ that
$$
d_H(\bar\F(e),\{(t,\bar\Phi_{ts(e)}(x(e))):0\le t\le s(e)\})\le2\ve_0.
$$

We complete the proof by obtaining an analogous estimate for $\bar\GG(e)$.
Recall that $\bar\GG(e)=\{\bar\s(p_\t):\t\ge0\}$ where $p=p(z(e))$ is the minimal length gap path
starting from $p_0(z(e))$, the closest point to $z(e)$ which is not in the interior of $\tilde K_N$.
Write $\bar\s(p_0(z(e)))=(s_0,x_0)$.

Suppose that $t<s(e)-\ve'$ and $p$ makes an excursion left of the line $\{t+i\sqrt{\d^*}y:y\in\R\}$, with endpoints $w^-,w^+$, say.
Then the open line segment $(w^-,w^+)$ must contain a point of $\tilde K_N$, say $w\in\tilde P_{m+1}+2\pi ij$. Set $\bar\s(w)=(t,y)$.
Then, since $p$ cannot cross $\tilde K_N$,
there must exist $z\in\tilde P_{n+1}+2\pi ik$, an ancestor particle of $\tilde P_{m+1}+2\pi ij$, with $\s(z)=(s,x)$, say, and $s\ge s_0$.
But then
$$
\bb
s(e)\le s_0+\ve'/3\le s+\ve'/3\le cn+2\ve'/3\le cm+2\ve'/3\le t+\ve'<s(e)
\eb
$$
which is impossible. Hence there is no such excursion and so
$$
\bar\GG(e)\sse\{(s,x):s\ge s(e)-\ve',x\in\R\}.
$$

Consider $(t,y)=\bar\s(w)$ with $w\in\tilde P_{m+1}+2\pi ij$ and $m\le N-1$ and $t\ge s(e)-3\ve'$ and \bb$|y|+3\ve \le 2R$ and \eb $y\ge\bar\Phi_{vs(e)}(x(e))+\ve_0$,
where $v=s(e)\vee t\wedge T$. Note that $t\le T+\ve'/3$ and $|v-t|\le3\ve'$.
Suppose $(s,x)=\bar\s(z)$ with $z\in\tilde P_{n+1}+2\pi ik$ and $|s-s(e)|\le\ve'$ and where $\tilde P_{n+1}+2\pi ik$ is an ancestor particle of $\tilde P_{m+1}+2
\pi ij$.
Then
$$
x\ge\bar\Phi_{st}(y-3\ve)-3\ve\ge\bar\Phi_{s(e)v}(y-4\ve)-4\ve\ge x(e)+\ve.
$$
Hence $\bar\F(t,y)$ does not meet the vertical half-line $\{(s_0,x):x\le x_0\}$.

Define
$$
\bar\Phi(e)=\{(t,\bar\Phi_{t\wedge T,s(e)}(x(e))):t\ge s(e)\}
$$
and set $I=[s(e)-2\ve',T]$.
There exists a continuous function $\bb y(t):I\to\R \eb$ such that, for all $t\in I$, setting $v=s(e)\vee t\wedge T$, we have
$$
y(t)>\bar\Phi_{vs(e)}(x(e)),\q d((t,y(t)),\bar\Phi(e))=\ve_0+\ve+5\ve'.
$$
Define recursively a sequence $\t_0,\dots,\t_M$ by setting $\t_0=s(e)-2\ve'$ and then taking $\t_{n+1}$ as the supremum
of the set
$$
\bb
\{\t\in [\t_n,T]:|(\t,y(\t))-(\t_n,y(\t_n))|=\ve'\}
\eb
$$
until $n=M-1$ when this set is empty and we set $\t_M=T$.
For $n=0,1,\dots,M$, choose $w_n\in\tilde K_N$ with $\bar\s(w_n)=(t_n,y_n)$ and $|t_n-\t_n|\le\ve'$ and $|y_n-y(\t_n)|\le\ve$.
Note that $t_0\le s(e)-\ve'$ and $t_M\ge T-\ve'$ and $t_n\in[s(e)-3\ve',T+\ve']$ \bb and $|y_n| + 3\ve \le 2R$ \eb  for all $n$.
Set
\bb
$$
B_0=\bigcup_{n=0}^{M-1}[w_n,w_{n+1}),\q B_1=\{t+i\sqrt{\d^*}y_M:t\ge t_M\},\q B=B_0\cup B_1.
$$
Then\eb,  for any $w\in B_0$, for $(t,y)=\bar\s(w)$,
we have $|(t,y)-(\t_n,y(\t_n))|\le\ve+2\ve'$ for some $n$, so $\ve_0+3\ve'\le d((t,y),\bar\Phi(e))\le\ve_0+2\ve+7\ve'$
and so $y\ge\bar\Phi_{vs(e)}(x(e))+\ve_0$,
where $v=s(e)\vee t\wedge T$. The final inequality obviously extends to $B$.

Suppose $p$ crosses $B$, and does so for the first time at $\bb \t(1) \eb$. Consider first the case where $p_{\t(1)}\in[w_n,w_{n+1})$.
Then, since $w_n$ and $w_{n+1}$ are both connected to the imaginary axis in $\tilde K_N$ and $p$ cannot cross $\tilde K_N$,
it must eventually hit $[w_n,w_{n+1}]$ again after $\t(1)$, at time $\bb \t(2) \eb$ say, except possibly if $p_{\t(1)}=w_n$.
If the open line segment $(p_{\t(1)},p_{\t(2)})$ contains a point $w\in\tilde K_N$ with $\bar\s(w)=(t,y)$, then for all $z\in\fing(w)$ with $\bar\s(z)=(s,x)$
and $|s-s(e)|\le\ve'$ we have $x\ge x(e)+\ve'$. But this is impossible because $w$ is disconnected from the imaginary axis
by $\{s_0+i\sqrt{\d^*}x:x\le x_0\}\cup\{p_\t:\t\ge0\}$. Hence $(p_{\t(1)},p_{\t(2)})\sse\tilde D_N$, so $p_\t\in[p_{\t(1)},p_{\t(2)}]$ for all $\t\in(\t(1),\t(2))$,
contradicting our crossing assumption. In the case $p_{\t(1)}=w_n$, if $p$ does not return to $[w_n,w_{n+1}]$, then it must hit $[w_{n-1},w_n]$ instead
and this also leads to a contradiction by a similar argument. The case where $p_{\t(1)}\in B_1$ also leads to a contradiction
of minimality by a similar argument.
Hence $p$ never crosses $B$.
So, for all $(t,y)\in\bar\GG(e)$ with $y\ge\bar\Phi_{vs(e)}(x(e))$, we have $d((t,y),\bar\Phi(e))\le\ve_0+2\ve+7\ve'\le2\ve_0$.
A similar argument establishes this estimate also in the case $\bb y\le\bar\Phi_{vs(e)}(x(e)) \eb$.
Since $\bar\GG(e)$ is a connected set joining $(s_0,x_0)$ to $\{T\}\times\R$, this implies
$$
d_H(\bGG(e),\bar\Phi(e))\le2\ve_0.
$$
\end{proof}

\bibliography{bibliography}
\end{document}